\documentclass[10pt]{amsart}
\usepackage{amsfonts}
\usepackage{amssymb}
\usepackage{graphicx,color}

\usepackage{a4wide}

\usepackage{amsmath}

\usepackage{amsthm}

\usepackage{color, colortbl}

\definecolor{Gray}{gray}{0.9}
\definecolor{lgrey}{RGB}{243,243,243}
\definecolor{lred}{RGB}{185,0,0}
\definecolor{lblue}{RGB}{1,17,207}

%\graphicspath{{./Figs/}}
\DeclareGraphicsExtensions{.pdf,.jpg,.png,.eps,.gif}
\title{Bounds for spherical codes: the Levenshtein framework lifted}
\textwidth 16.6cm \textheight 20.4cm \topmargin 0in
\oddsidemargin 0.1in
\evensidemargin 0.1in
\parskip 1mm

\date{\today}

\newtheorem{theorem}{Theorem}[section]
\newtheorem{lemma}[theorem]{Lemma}
\newtheorem{corollary}[theorem]{Corollary}

\newtheorem{conjecture}[theorem]{Conjecture}

\theoremstyle{definition}
\newtheorem{definition}[theorem]{Definition}

\newtheorem{remark}[theorem]{Remark}

\newcommand{\Sp}{\mathbb{S}}
\newcommand{\R}{\mathbb{R}}

\begin{document}

\author{P. G. Boyvalenkov}\thanks{$^1$The research of the first and fifth authors was supported, in part, by a Bulgarian NSF contract DN02/2-2016.}
\address{ Institute of Mathematics and Informatics, Bulgarian Academy of Sciences, \\
8 G Bonchev Str., 1113  Sofia, Bulgaria
and Technical Faculty, South-Western University, Blagoevgrad, Bulgaria }
\email{peter@math.bas.bg}

\author{P. D. Dragnev}\thanks{$^2$The research of the second author was supported, in part, by a Simons Foundation grant no. 282207.}
\address{ Department of Mathematical Sciences, Purdue University \\
Fort Wayne, IN 46805, USA }
\email{dragnevp@pfw.edu}

\author{D. P. Hardin}\thanks{$^3$The research of the third and fourth authors was supported, in part,
by the U. S. National Science Foundation under grant DMS-1516400.}
\address{ Center for Constructive Approximation, Department of Mathematics, \\
Vanderbilt University, Nashville, TN 37240, USA }
\email{doug.hardin@vanderbilt.edu}

\author{E. B. Saff}
\address{ Center for Constructive Approximation, Department of Mathematics, \\
Vanderbilt University, Nashville, TN 37240, USA }
\email{edward.b.saff@vanderbilt.edu}
%\fntext[4]{The research of these authors was supported, in part, by the U. S. National Science Foundation under grant DMS-1516400.}

\author{M. M. Stoyanova}
\address{ Faculty of Mathematics and Informatics, Sofia University, \\
5 James Bourchier Blvd., 1164 Sofia, Bulgaria}
\email{stoyanova@fmi.uni-sofia.bg}

\maketitle

\vspace*{-10mm}

\begin{abstract}
Based on the Delsarte-Yudin linear programming approach, we extend Levenshtein's framework to obtain lower bounds for the minimum $h$-energy of spherical codes of prescribed dimension and cardinality,
and upper bounds on the maximal cardinality of spherical codes of prescribed dimension and minimum separation. 
These bounds are universal in the sense that they hold for a large class of potentials $h$ and in the sense of Levenshtein. Moreover, codes attaining the bounds are universally optimal in the sense of Cohn-Kumar. 
Referring to Levenshtein bounds and the energy bounds of the authors as ``first level", our results can be considered as ``next level" universal bounds as they have the same general nature and imply necessary and sufficient conditions for their local and global optimality. For this purpose, we introduce the notion of Universal Lower Bound space (ULB-space), a space that satisfies certain quadrature and interpolation properties.
%We explain in detail our approach for deriving second level bounds. 
While there are numerous cases for which our method applies, we will emphasize the model examples of $24$ points ($24$-cell) and $120$ points ($600$-cell) on $\mathbb{S}^3$.
In particular, we provide a new proof that the $600$-cell is universally optimal, and in so doing, we derive optimality of the $600$-cell on a class larger than the absolutely monotone potentials considered by Cohn-Kumar. 
%Furthermore, we completely characterize the optimal linear programing polynomials of degree at most $17$ by finding two new polynomials, which together with the Cohn-Kumar's polynomial
%form the vertices of the convex hull that consists of all optimal polynomials. 
%%The segment defined by the two new polynomials forms the optimality set for this larger class. 
%Our framework also provides a conceptual explanation of why polynomials of degree $17$ are needed to handle the $600$-cell via linear programming.
\end{abstract}

\

{\bf Keywords}: {\it Levenshtein framework, minimal energy problems, linear programming, bounds for codes}.

\

{\bf MSC 2010}: {\it 94B65, 74G65, 52A40}

\

\maketitle

\section{Introduction}

Let $\Sp^{n-1}$ denote the unit sphere in $\R^n$. We consider finite configurations (codes) $C \subset \Sp^{n-1}$ of $N \geq 2$ points.
Given an (extended real-valued) function $h(t):[-1,1] \to [0,+\infty]$, the {\em $h$-energy} of $C$ is given by
%\begin{equation}
%\label{CodeEnergy}
\[ E_{h}(C):=\sum_{x, y \in C, x \neq y} h(\langle x,y \rangle), \]
%\end{equation}
where $\langle x,y \rangle$ denotes the usual inner product of $x$ and $y$.
We are interested in lower bounds on the {\em minimal energy}
\begin{equation}\label{Energy}
 \mathcal{E}_h(n,N):=\inf_{|C|=N} \{E_h(C) \},
\end{equation}
where $|C|$ denotes the cardinality of $C$.

Delsarte-Yudin's approach \cite{Y,CK,BDHSS} for finding such lower bounds by linear programming (LP) is described as follows.
Let $A_{n,h}$ denote the {\em feasible domain} of continuous functions $f$ on $[-1,1]$:
\begin{equation}\label{fform}
A_{n,h}:=\{ f\, : \, f(t)=\sum_{i=0}^\infty f_i P_i^{(n)}(t)\leq h(t), \ t \in [-1,1], \ f_i\ge 0,\  i= 1,2,\dots\},
\end{equation}  where $\{P_i^{(n)}(t)\}$ are the Gegenbauer polynomials \cite{Sze} normalized by $P_i^{(n)}(1)=1$ (see Section 2.1).
Then \eqref{Energy} can be estimated by elements of the class \eqref{fform} by
\begin{equation}\label{glowbound2}
 \mathcal{E}_h(n,N)\ge \max_{f\in A_{n,h}} N^2\left(f_0  -\frac{f(1)}{N}\right).
\end{equation}

Hereafter we consider only absolutely monotone potentials $h$.

\begin{definition}
An extended real-valued function $h(t):[-1,1] \to (0,+\infty]$ is called \emph{absolutely monotone} if
$h^{(i)}(t)\geq 0$, for every $t\in[-1,1)$ and every integer $i \geq 0$, and $h(1)=\lim_{t \to 1^{-}} h(t)$.
\end{definition}

Instead of solving the infinite-dimensional linear program on the right-hand side of \eqref{glowbound2} one
may restrict to a subspace $\Lambda \subset C([-1,1])$ (usually finite-dimensional), namely determining the quantity
\begin{equation}\label{Wdef} \mathcal{W}_{h,\Lambda}(n,N):= \sup_{f \in \Lambda \cap  A_{n,h}} N^2\left(f_0  -\frac{f(1)}{N}\right). \end{equation}

\begin{definition}
\label{opti-poly}
A polynomial $f \in \Lambda \cap A_{n,h}$ is called $\Lambda$-LP-\emph{optimal}, if it attains the supremum in \eqref{Wdef}.
If $\Lambda$ is the space of all real polynomials, then $f$ is called {\em LP-optimal}.
\end{definition}

In our considerations the space $\Lambda$ depends on $n$ and $N$. We are interested in a special class of spaces that we call \emph{ULB-spaces}
(see subsection \ref{ULB-space-def} for the definition).

In \cite{BDHSS} we derived {\em Universal Lower Bounds} (ULB) on energy by
finding $\mathcal{W}_{h,\Lambda}(n,N)$ when $\Lambda= \mathcal{P}_m$, the space of
polynomials of degree at most $m=\tau(n,N)$, where $\tau(n,N)$ is the unique positive integer such that the cardinality $N$ lies in an interval
defined by consecutive Delsarte-Goethals-Seidel bounds (see \eqref{tauNn} for the rigouros defintion). We found $\mathcal{P}_m$-LP-optimal polynomials
(see \cite[Theorem 3.1]{BDHSS}, as well as Theorem \ref{thm3.2} in this paper). We now call this ULB a \emph{first level ULB} and proceed further with next levels requiring, of course, higher degrees. 
One of the main goals of this article is to introduce a framework for finding a second level $\mathcal{W}_{h,\Lambda}(n,N)$,
where $\Lambda=\mathcal{P}_{\tau(n,N)+4}$. We explain in Section 3 how $\Lambda$-LP-optimal polynomials of degree $\tau(n,N)+4$
can be found in $\Lambda$.

We are also interested in the closely related problem of determining the maximal possible cardinality of
a spherical code $C$ on $\mathbb{S}^{n-1}$ of prescribed maximal inner product $s(C):=\max\{ \langle x,y \rangle \, : \,   x,y \in C,  x\neq y \}$
\begin{equation}
\label{ANS}
\mathcal{A}(n,s):=\max\{|C| \colon C \subset \mathbb{S}^{n-1}, s(C)=s\},
\end{equation}
and the related quantity (also known as the {\em Tammes problem})
\begin{equation}
\label{snm}
s(n,N):=\min \{ s(C) \, : \, C \subset \mathbb{S}^{n-1}, \ |C|=N \}.
\end{equation}

Linear programming upper bounds for $\mathcal{A}(n,s)$ are obtained by polynomials from the class
\begin{equation}\label{gform}
B_{n,s}:=\{ g\, : \, g(t)=\sum_{k=0}^\infty g_k P_k^{(n)}(t)\leq 0, \ t \in [-1,s], \ g_k\ge 0,\  k= 1,2,\dots\},
\end{equation}
(see Theorem \ref{thm-lp-sph} below). The first level upper bound $\mathcal{A}(n,s) \leq L(n,s)$, due to Levenshtein \cite{Lev78,Lev83,Lev92} (see also \cite{DL,KL}),
is obtained by polynomials of degree $m=\tau(n,N)$ that belong to the class \eqref{gform}, as explained in Section 2.4 (see \eqref{lev-poly}).
It is important to note an intimate connection between $\mathcal{P}_m$-LP-optimal polynomials that yield our ULB and the Levenshtein polynomials, namely that the first interpolate the potential-interaction function at the zeros of the second. The commonality is given by a special $1/N$-quadrature rule with nodes at these zeros (see Definition \ref{1/N-quadrature}). Even though in most of our applications $N$ denotes the cardinality of a code $C$, it is beneficial not to restrict $N$ to positive integers, but to positive reals.

Our results yield second level LP bounds for  $\mathcal{A}(n,s)$ with polynomials of degree $m+4$ for certain $s$ and  $m=\tau(n,L(n,s))$.
The second level bounds for $\mathcal{E}_h(n,N)$ and $\mathcal{A}(n,s)$, where $N=L(n,s)$,
happen simultaneously and exactly when $\Lambda$ is a ULB-space. An important $1/N$-quadrature formula serves as the intersecting aspect again,
and, exactly as at the first level, its nodes are both roots of the improving polynomials
for $ \mathcal{A}(n,s)$ as well as interpolation nodes (to the potential-interaction function) of the $\Lambda$-LP-optimal polynomials for improving the ULB.
Thus, our approach can be viewed as lifting the Levenshtein framework to next level(s).

%Furthermore, we provide arguments (see the end of subsection 2.2) 
%that an approach via polynomials of degrees $\tau(n,N)+3$ and/or $\tau(n,N)+4$ is possible whenever 
%the first level polynomials of degree $\tau(n,N)$ are not optimal.  

The paper is organized as follows. In Section 2, following a brief discussion of Gegenbauer polynomials, we define in subsection 2.2 what is meant by a $1/N$-quadrature rule. In subsection 2.3 we introduce the notion of a ULB space.
In subsection 2.4 we explain results of Levenshtein and Delsarte, Goethals, and Seidel
that are instrumental in defining the first level bounds.
In Subsection 2.5 we show how the notion of ULB space unifies our energy bound in \cite{BDHSS} and the Levenshtein bound \cite{Lev} (see Theorem  \ref{LevULBCombined}). The Hermite interpolation framework utilized in ULB space analysis is established in Subsection 2.6. We define the first level ULB
for $\mathcal{E}_h(n,N)$ in Theorem \ref{thm3.2}.
Section 3 examines the second level ULB on energy and second-level Levenshtein-type upper bounds on cardinality of codes with fixed maximum inner product. We consider necessary and sufficient conditions on the so-called  skip two/add two subspace to be ULB space, with the case when $n$ and $N$ are such that $\tau(n,N)=2k-1$ being considered in more detail. Subsection 3.1 is devoted to a detailed examination on necessary and sufficient conditions for existence and uniqueness of a $1/N$-quadrature rule on the skip two/add two subspace. Section 3.2 considers the existence of a Hermite interpolant on the subspace in question. The second level bounds are presented in subsection 3.3 and in subsection 3.4 we briefly review the case  $\tau(n,N)=2k$.  Section 4 demonstrates the second level lift with two model examples, $(n,N)=(4,24)$ and
$(4,120)$, respectively. In the case of $24$ points on $\mathbb{S}^3$ we show that the optimal polynomial
solving \eqref{Wdef} for $\Lambda=\mathcal{P}_9$ is
also a solution of \eqref{glowbound2}. In Section 5 we perform a third level lift of the Levenshtein framework and as a result derive a new proof that the $600$-cell is universally optimal. Moreover, we completely characterize the optimal polynomials of degree at most $17$ for the Delsarte-Yudin linear programming lower bounds by finding two new polynomials that, together with Cohn-Kumar's polynomial, form the vertices of the convex hull that consists of all optimal polynomials. Our framework provides a conceptual explanation of why polynomials of degree $17$ are needed to handle the $600$-cell via linear programming. Section 6 presents numerical results, such as graphics illustrating our new bounds, as well as a sample of an extensive list of cases where the lift of the Levenshtein framework is achievable. We display the improved energy bound and the associated separation distance. More comprehensive list may be found on https://my.vanderbilt.edu/edsaff/.

\section{Preliminaries}

\subsection{Gegenbauer and adjacent polynomials}

We denote by $P_i^{(n)}(t)$ the Gegenbauer polynomials of respective degrees $i$ that are
orthogonal with respect to the measure
\[d\mu(t):=\gamma_n (1-t^2)^{\frac{n-3}{2}}\, dt, \quad t\in [-1,1], \]
where
$ \gamma_n := \Gamma(\frac{n}{2})/\sqrt{\pi}\Gamma(\frac{n-1}{2})$
is a normalizing constant that makes $\mu$ a probability measure. We impose the
normalization $P_i^{(n)}(1)=1$ for every $i$, and note that $P_i^{(n)}(t)$ is just the Jacobi
polynomial $P_i^{(\frac{n-3}{2},\frac{n-3}{2})}(t)$.

For $a,b \in \{0,1\}$ and $i \geq 0$, we denote by $P_i^{a,b}(t):=
P_i^{(a+\frac{n-3}{2},b+\frac{n-3}{2})}(t)$ the corresponding Jacobi polynomial, normalized
again by $P_i^{a,b}(1)=1$, which is called an {\sl adjacent polynomial} \cite{Lev92}.
The (probability) measure of orthogonality for the sequence of these polynomials is 
\begin{equation}\label{nuab}
d\nu^{a,b}(t):=c^{a,b} (1-t)^a(1+t)^b d\mu(t),
\end{equation}
where $c^{a,b}$ is a normalizing constant. Of course, if $a=b=0$ we
have the Gegenbauer polynomials where we use instead the $(n)$ indexing.
Connections between the Gegenbauer polynomials and their adjacent polynomials are given by the Christoffel-Darboux
formulas (see \cite[Lemma 5.24]{Lev}).

We write $\langle f,g \rangle_{a,b}$ to denote the inner product
\[ \langle f,g \rangle_{a,b}:=\int_{-1}^1 f(t)g(t) \, d\nu^{a,b}(t), \]
and set $\| f \|_{a,b}^2 := \langle f,f \rangle_{a,b}$,  $\| f \|^2 := \langle f,f \rangle$, where
$ \langle \cdot,\cdot \rangle := \langle \cdot,\cdot \rangle_{0,0}$.

For any real polynomial $f(t)$ of degree $r$ we have for fixed $a,b$ the unique expansion
\[ f(t)= \sum_{i=0}^r f_i^{a,b} P_i^{a,b}(t) \]
with well-defined coefficients
\[ f_i^{a,b} := r_i^{a,b} \int_{-1}^1 f(t)P_i^{a,b}(t) \, d\nu^{a,b}(t), \]
where
\begin{eqnarray*}
r_i^{a,b} &:=& \left(\int_{-1}^1 \left(P_i^{a,b}(t)\right)^2 \, d\nu^{a,b} (t) \right)^{-1}=\|P_i^{a,b}\|_{a,b}^{-2} \\
%&=& \frac{1}{r_ib_i}\left(\sum_{j=0}^i r_j\right)^2,
\end{eqnarray*}
(see Lemma 5.24 in \cite{Lev}) and
%\[r_i=b_i^{-1} {i+n-2 \choose i}, \ \ b_i=\frac{i+n-2}{2i+n-2}, \ \  \]
\[r_i=r_i^{0,0}=\frac{2i+n-2}{i+n-2} {i+n-2 \choose i}.  \]
For future reference, we write
\[ P_i^{a,b}(t)=\sum_{j=0}^i a_{i,j}^{a,b} t^j. \]

\begin{definition}
A polynomial $f(t)$ of degree $r$ is called ({\it strictly}) {\it positive definite} if $f_i=f_i^{0,0} \geq 0$ for every
$i$ (if $f_i>0$ for every $i=0,1,\ldots,r$) in its
Gegenbauer expansion.
\end{definition}

Similarly, we consider $(1,0)$-positive definite polynomials (note that $(1,0)$-positive definiteness implies
positive definiteness since by the Christoffel-Darboux formula  $P_i^{1,0}(t)=\sum_{j=0}^i r_j P_j^{(n)}(t)/ \sum_{j=0}^i r_j $ with
positive $r_j=r_j^{0,0}$).

We also employ the so-called {\em Krein condition} and in some cases the {\em strengthened Krein condition} (see \cite[Section 3.3]{Lev}). Both conditions
are satisfied by the Gegenbauer polynomials and their adjacent polynomials since
\begin{equation}
\label{krein}
P_i^{(n)}(t)P_j^{(n)}(t)
\end{equation}
and
\begin{equation}
\label{s-krein}
(t+1)P_i^{1,1}(t)P_j^{1,1}(t)
\end{equation}
are respectively positive definite and strictly positive definite for any nonnegative integers $i,j$ (see  \cite{Gas}, \cite[Lemma 3.22]{Lev}).

\subsection{$1/N$-Quadrature rules and  lower bounds for energy on subspaces}

An important ingredient in obtaining LP bounds for $\mathcal{E}_h(n,N)$ and $\mathcal{A}(n,s)$
is the notion of a $1/N$-quadrature rule over subspaces introduced by the authors in \cite{BDHSS}, which we briefly review here.

\begin{definition} \label{1/N-quadrature} (\cite{BDHSS}, Section 2.2) A finite sequence of ordered pairs
%$(\boldsymbol{\alpha},\boldsymbol{\rho})=$
$\{(\alpha_i, \rho_i)\}_{i=1}^{k}$, $-1 \leq \alpha_1 < \alpha_2 <\cdots <\alpha_{k} < 1$,
$\rho_i>0$ for $i=1,2,\ldots,k$, forms a {\em $1/N$-quadrature rule that is exact  for a subspace  $\Lambda \subset C[-1,1]$}  if
%\begin{equation}
%\label{defin_f0.1}
\[ f_0=\int_{-1}^1f(t)d\mu(t)= \frac{f(1)}{N}+ \sum_{i=1}^{k} \rho_i f(\alpha_i), \]
%\end{equation}
 for all $f\in \Lambda$.
\end{definition}

The following results from \cite{BDHSS} find frequent utilization in the present work.

\begin{theorem}[\cite{BDHSS}, Theorems 2.3 and 2.6]\label{THM_subspace}
Let $\{(\alpha_i, \rho_i)\}_{i=1}^{k}$ be a $1/N$-quadrature rule that is exact
for a subspace $\Lambda\subset C([-1,1])$. If $f\in \Lambda\cap A_{n,h}$, then
$\mathcal{E}_h(n,N) \ge  N^2\sum_{i=1}^{k} \rho_i f(\alpha_i)$ and
\begin{equation}
\label{Wineq1}
\mathcal{W}_{h,\Lambda}(n,N)\le N^2\sum_{i=1}^{k} \rho_i h(\alpha_i).
\end{equation}
If there exists an $f\in \Lambda \cap  A_{n,h}$ such that $f(\alpha_i)=h(\alpha_i)$ for $i=1,\ldots, k$, then
equality holds in \eqref{Wineq1} (i.e., $f$ is $\Lambda$-LP-optimal) which yields the   lower bound
\begin{equation}
\label{ULB1}
\mathcal{E}_h(n,N) \ge  N^2\sum_{i=1}^{k} \rho_i h(\alpha_i).
\end{equation}
Furthermore, in this case if $\Lambda' :=\Lambda \oplus \text{span}\{P_j^{(n)} \colon  j\in \mathcal{I}\}$ for some index set $\mathcal{I}\subseteq {\mathbb N}$ and the quantities
\begin{equation}
\label{TestFunctions}
Q_j^{(n,N)}:= \frac{1}{N}+\sum_{i=1}^{k}\rho_i P_j^{(n)}(\alpha_i)
\end{equation}
satisfy $Q_j^{(n,N)} \ge 0 $ for $j\in \mathcal{I}$, then
%\begin{equation}
%\label{SubspaceSolution}
\[ \mathcal{W}_{h,\Lambda^\prime}(n,N)=\mathcal{W}_{h,\Lambda}(n,N)=N^2\sum_{i=1}^{k}\rho_i h(\alpha_i); \]
%\end{equation}
i.e., $f$ is $\Lambda^\prime$-LP-optimal. In particular, if $\mathcal{I}=\mathbb{N}$, then $f$ is LP-optimal.
\end{theorem}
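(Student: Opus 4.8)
The plan is to dispatch the three assertions in turn, using exactness of the $1/N$-quadrature rule on $\Lambda$ as the common engine and membership in $A_{n,h}$ to control both the pointwise bound $f\le h$ and the signs of Gegenbauer coefficients. For the opening two inequalities I would take any $f\in\Lambda\cap A_{n,h}$ and apply the defining identity of the quadrature rule to rewrite $f_0-f(1)/N=\sum_{i=1}^{k}\rho_i f(\alpha_i)$. Substituting this into \eqref{glowbound2} gives $\mathcal{E}_h(n,N)\ge N^2\sum_{i=1}^{k}\rho_i f(\alpha_i)$ at once. Since $f\in A_{n,h}$ forces $f(\alpha_i)\le h(\alpha_i)$ and each $\rho_i>0$, the same quantity is bounded above by $N^2\sum_{i=1}^{k}\rho_i h(\alpha_i)$; taking the supremum over $f\in\Lambda\cap A_{n,h}$ yields \eqref{Wineq1}.

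For the equality case I would observe that if a feasible $f$ interpolates the potential at every node, $f(\alpha_i)=h(\alpha_i)$, then the upper estimate just obtained becomes an equality, so the supremum in \eqref{Wdef} is attained and $f$ is $\Lambda$-LP-optimal. Feeding this value back through \eqref{glowbound2} delivers the lower bound \eqref{ULB1}.

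The substantive step is the stability of $\mathcal{W}$ under enlarging $\Lambda$ to $\Lambda'$. Since $\Lambda\subseteq\Lambda'$, one direction is automatic, so it suffices to prove $\mathcal{W}_{h,\Lambda'}(n,N)\le N^2\sum_{i=1}^{k}\rho_i h(\alpha_i)$. Given $g\in\Lambda'\cap A_{n,h}$, I would write $g=u+\sum_{j\in\mathcal{I}}c_j P_j^{(n)}$ with $u\in\Lambda$, and evaluate the linear functional $L(f):=f_0-f(1)/N-\sum_{i=1}^{k}\rho_i f(\alpha_i)$. It vanishes on $u$ by exactness on $\Lambda$, while on each added $P_j^{(n)}$ with $j\ge 1$ it equals $-Q_j^{(n,N)}$, because $(P_j^{(n)})_0=0$ and $P_j^{(n)}(1)=1$. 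Hence $g_0-g(1)/N=\sum_{i=1}^{k}\rho_i g(\alpha_i)-\sum_{j\in\mathcal{I}}c_j Q_j^{(n,N)}$, and bounding $\sum_{i=1}^{k}\rho_i g(\alpha_i)\le\sum_{i=1}^{k}\rho_i h(\alpha_i)$ as before reduces everything to showing the defect $\sum_{j\in\mathcal{I}}c_j Q_j^{(n,N)}$ is nonnegative.

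The main obstacle is precisely this sign control. The hypothesis supplies $Q_j^{(n,N)}\ge 0$, so it remains to argue $c_j\ge 0$. Here I would exploit that in the ULB-space setting $\Lambda$ is spanned by Gegenbauer polynomials with indices outside $\mathcal{I}$, so the direct-sum coefficients $c_j$ coincide with the genuine Gegenbauer coefficients $g_j$ of $g$; membership $g\in A_{n,h}$ then guarantees $g_j\ge 0$ for every $j\ge 1$, in particular for $j\in\mathcal{I}$. This gives $g_0-g(1)/N\le\sum_{i=1}^{k}\rho_i h(\alpha_i)$ for all feasible $g$, whence $\mathcal{W}_{h,\Lambda'}(n,N)=\mathcal{W}_{h,\Lambda}(n,N)=N^2\sum_{i=1}^{k}\rho_i h(\alpha_i)$ and the $\Lambda'$-optimality of $f$. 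Finally, taking $\mathcal{I}=\mathbb{N}$ makes $\Lambda'$ the full space of real polynomials, so $\Lambda'$-LP-optimality becomes LP-optimality in the sense of Definition \ref{opti-poly}, completing the argument.
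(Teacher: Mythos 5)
The paper does not prove this theorem; it is quoted verbatim from \cite{BDHSS} (Theorems 2.3 and 2.6), so there is no in-paper argument to compare against. Your proof is correct and is exactly the standard argument behind that result: exactness of the quadrature converts $f_0-f(1)/N$ into $\sum_i\rho_i f(\alpha_i)$, the constraint $f\le h$ gives \eqref{Wineq1}, interpolation gives equality, and the functional $L(f)=f_0-f(1)/N-\sum_i\rho_i f(\alpha_i)$ evaluates to $-Q_j^{(n,N)}$ on each added $P_j^{(n)}$. The one point worth flagging is the step $c_j\ge 0$: as you note, identifying the direct-sum coefficients $c_j$ with the Gegenbauer coefficients $g_j$ requires that $\Lambda$ carry no component along $P_j^{(n)}$ for $j\in\mathcal{I}$, which is not literally among the stated hypotheses for an arbitrary $\Lambda\subset C([-1,1])$ but holds for every space used in this paper ($\mathcal{P}_\tau$, $\Lambda_{n,k}$, and the $\Lambda_i$ of Section 5), so the gap is one of hypothesis bookkeeping rather than of substance.
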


% \textcolor{red}{Give brief description of Levenshtein quadrature, $\tau(n,N)$, etc and point forward to   Section~\ref{ULB-space-def} for more details. \medskip }
%

The quantities $Q_j^{(n,N)}$ are called {\em test functions}\footnote{In fact, they coincide with the test functions from \cite{BDB} 
used for investigations of maximal spherical codes (see also \cite[Theorem 5.47]{Lev}). The name "test functions" reflects their main property as an indicator whether a corresponding universal bound is LP-optimal or
could be improved by linear programming.}  and were introduced and investigated in \cite{BDHSS} for the Levenshtein quadrature (see \eqref{quadrature_nodes}-\eqref{defin_qf}) when $\Lambda={\mathcal P}_m$, $m\leq \tau(n,N)$, where $\tau(n,N)$ is defined in \eqref{tauNn} below. It follows from the Levenshtein quadrature (22) applied for the polynomials $P_j^{(n)}(t)$, $j=1,\dots,\tau(n,N)$, that $Q_j^{(n,N)}=0$, $j=1,\dots,\tau(n,N)$. It was shown  in \cite{BDB} that both $Q_{\tau(n,N)+1}^{(n, N)}\ge 0$ and  $Q_{\tau(n,N)+2}^{(n,N)}\ge 0$ resulting in $\mathcal{W}_{h,\mathcal{P}_{\tau(n,N)}}(n,N)=\mathcal{W}_{h,\mathcal{P}_{\tau(n,N)+2}}(n,N)$. Moreover, a conjecture \cite[Conjecture 4.8]{BDHSS} tested on numerous occasions states that if $Q_{\tau(n,N)+3}^{(n,N)}\geq 0$ and $Q_{\tau(n,N)+4}^{(n)}\geq 0$, then $Q_j^{(n,N)}\geq 0$ for all $j$ and hence, the ULB is LP-optimal.

Motivated by this we consider in this article cases when at least one of the test functions $Q_{\tau(n,N)+3}^{(n,N)}$ and $Q_{\tau(n,N)+4}^{(n, N)}$ is strictly negative and consider solving $\mathcal{W}_{h,\Lambda(n,N)}$ for {\em skip-two add-two} subspaces of 
\begin{equation}\label{Lambdakn} \Lambda_{n,k}:=\mathcal{P}_{\tau(n,N)}\oplus \text{ span}\{P^{(n)}_{\tau(n,N)+3},P^{(n)}_{\tau(n,N)+4}\},\end{equation}
where $k:=\lceil\tau(n,N)/2\rceil$; i.e.,  $\tau(n,N)=2k-1$ or $2k$.  In fact, it was also shown in \cite{BDB} that
for each dimension $n$, there is some $k_0=k_0(n)$ such that $Q_{2k+3}^{(n,N)}<0$ for $k\ge k_0$, which yields that for fixed $n$, if the cardinality $N$ is sufficiently large, we have $\mathcal{W}_{h,\Lambda_{n,k}}(n,N)>\mathcal{W}_{h,\mathcal{P}_{\tau(n,N)}}(n,N)$.   In this paper we develop necessary and sufficient conditions for the existence of a $1/N$-quadrature rule that is exact on $\Lambda_{n,k}$ and such that there is an $f\in \Lambda_{n,k}\cap A_{n,h}$ that interpolates $h$ at the nodes of the quadrature for any absolutely monotone $h$, and thus, provides an improved (or second level) ULB in these cases.  

%
%It was shown in \cite{BDB} for the maximum cardinality problem and in \cite{BDHSS} for the minimum energy problem that, starting from 
%some $k=k_0$, all test functions $Q_{2k+3}^{(n,N)}$, where $\tau(n,N)=2k-1$ or $2k$, are negative. In the minimum energy problem
%this immediately means that only finitely many cases of the conjecture remain to be checked (by computer). In the maximum cardinality 
%problem we can, without significant loss of generality, consider only improvements when $s$ is such that the first level (Levenshtein) bound
%is equal to an integer plus a fixed small positive number. This makes the problem finite as above. 
%
%We have checked great amount of cases and found that the above argument always work. In particular, our approach works in all 
%dimensions $n \leq 12$. 

\subsection{ULB-spaces}
\label{ULB-space-def} The above theorem motivates the following definition.

\begin{definition}
\label{def-ULB-space}
Let $n$ and $N$ be positive integers. A space $\Lambda\subset C[-1,1]$   is called a
{\em ULB-space} for dimension $n$ and cardinality $N$ and \eqref{ULB1} is called a {\em universal lower bound (ULB)}  if the following two conditions hold:

\rm{(i)} there exists a $1/N$-quadrature rule that is exact for $\Lambda$ (see Definition  \ref{1/N-quadrature});

\rm{(ii)} for any absolutely monotone function $h$ there exists some  $f\in \Lambda\cap A_{n,h}$ that agrees with $h$ at the nodes of the $1/N$-quadrature rule from (i).
\end{definition}

Our first result presents a lower bound on the quantity \eqref{snm}, which follows easily by applying the definition above.
\begin{theorem}\label{LevUpper}
Suppose $\Lambda$ is a ULB-space for given $n$ and $N$, and let $\alpha_k$ denote the largest node less than $1$ in
the $1/N$-quadrature rule.
Then $s(n,N)\geq \alpha_k$.
\end{theorem}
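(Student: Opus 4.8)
The plan is to argue by contradiction, producing from a hypothetical well-separated code a configuration whose energy, for a cleverly chosen potential, undercuts the universal lower bound guaranteed by the ULB-space structure. Concretely, I would assume $s(n,N) < \alpha_k$; then there is a code $C$ with $|C| = N$ and $s(C) =: s < \alpha_k$, so every pairwise inner product satisfies $\langle x, y\rangle \le s$ for distinct $x, y \in C$. The only inputs I would use are the two defining properties of a ULB-space: by Definition \ref{def-ULB-space}(i)--(ii) together with the equality assertion of Theorem \ref{THM_subspace}, for \emph{every} absolutely monotone $h$ the bound \eqref{ULB1} holds, namely $\mathcal{E}_h(n,N) \ge N^2 \sum_{i=1}^k \rho_i h(\alpha_i)$. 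Applying this to our specific $C$ (using $E_h(C) \ge \mathcal{E}_h(n,N)$) and discarding all but the top node gives $E_h(C) \ge N^2 \rho_k\, h(\alpha_k)$, since $\rho_k > 0$ and $h > 0$.

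The heart of the matter is to select a one-parameter family of absolutely monotone potentials that concentrates weight at the top node $\alpha_k$ relative to the forbidden interval $[-1,s]$. Since absolute monotonicity forces $h$ to be nondecreasing and finite on $[-1,1)$ (so one cannot simply place a singularity at $\alpha_k$), I would instead use the steep increasing family $h_m(t) := (c+t)^m$ with a fixed constant $c > 1$, which is strictly positive and manifestly absolutely monotone on $[-1,1]$ because each derivative is a positive multiple of a positive power of $c+t$. As $h_m$ is nondecreasing, every one of the $N(N-1)$ terms of $E_{h_m}(C)$ is at most $h_m(s)$, whence $E_{h_m}(C) \le N(N-1)\, h_m(s)$. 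Combining this with the lower bound above yields $N^2 \rho_k\, h_m(\alpha_k) \le N(N-1)\, h_m(s)$, that is, $h_m(\alpha_k)/h_m(s) \le (N-1)/(N\rho_k)$.

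Finally I would let $m \to \infty$. Because $s < \alpha_k$ while $c+s$ and $c+\alpha_k$ are both positive, the ratio $h_m(\alpha_k)/h_m(s) = ((c+\alpha_k)/(c+s))^m$ grows geometrically to $+\infty$, whereas the right-hand side $(N-1)/(N\rho_k)$ is a fixed constant; this contradiction establishes $s(n,N) \ge \alpha_k$. I expect the only genuine subtlety to lie in the design of the potential family: one must honor the constraints of absolute monotonicity (no interior blow-up, and monotone nondecreasing) yet still decouple the values on $[-1,s]$ from the single value at $\alpha_k$, which the exponential-type family $(c+t)^m$ achieves precisely through the strict gap $s < \alpha_k$. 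Everything else is a routine repackaging of the ULB inequality \eqref{ULB1} with the trivial estimate $E_{h_m}(C) \ge \mathcal{E}_{h_m}(n,N)$.
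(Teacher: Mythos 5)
Your proof is correct and follows essentially the same route as the paper: the paper sandwiches $N(N-1)h(s(n,N)) \geq E_h(C) \geq \mathcal{E}_h(n,N) \geq N^2\rho_k h(\alpha_k)$ for the family $h_a(t)=e^{at}$ and lets $a\to\infty$, which is exactly your argument with $(c+t)^m$ in place of the exponential family. The substitution of potential families is immaterial, since both achieve the same geometric separation of $h(\alpha_k)$ from $h(s)$ in the limit.
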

\begin{proof}
Let $\{(\alpha_i, \rho_i)\}_{i=1}^{k}$, $-1 \leq \alpha_1 < \alpha_2 <\cdots <\alpha_{k} < 1$,
$\rho_i>0$ for $i=1,2,\ldots,k$, be a  $1/N$-quadrature rule for $\Lambda$.
Let $C \subset \mathbb{S}^{n-1}$ be an optimal code for \eqref{snm} (also referred to as {\em best-packing code}), that is a code that minimizes the largest inner product. 
%Let $N_1:=\mathcal{A}(n,\delta_N)\geq N$ (it may happen that $N_1>N$, consider $11$ and $12$ points on $\mathbb{S}^2$). 
With $h_a(t):=\exp(at)$,  $a>0$, we have the following estimations
\[ N(N-1)e^{as(n,N)}\geq E_{h_a} (C)\geq  \mathcal{E}_{h_a} (n,N) \geq N^2 \sum_{i=1}^{k} \rho_i e^{a\alpha_i}\geq N^2 \rho_k e^{a\alpha_k}. \]
As  the inequality holds for all $a$ and $\rho_k$ is independent of $a$, we conclude that $s(n,N)\geq \alpha_k$.
\end{proof}

We next recall the linear programming upper bound of the quantity $\mathcal{A}(n,s)$ given in \eqref{ANS}.

\begin{theorem}
\label{thm-lp-sph} (LP bound for spherical codes, \cite{DGS,KL})
Let $n \geq 2$, $s \in [-1,1)$ and $g(t)=\sum_{i=0}^r g_i P_i^{(n)}(t) \in B_{n,s}$, where $B_{n,s}$ is defined in \eqref{gform}. Then  $\mathcal{A}(n,s) \leq g(1)/g_0$.
\end{theorem}

We note that Definition \ref{def-ULB-space} may be extended to positive real $N\geq 2$. We use this setting in the maximum cardinality problem \eqref{ANS}.

\subsection{Levenshtein's framework and ULB}

Of particular importance is the case when the subspace in subsection 2.2 is $\mathcal{P}_\tau$.
For this purpose we briefly introduce Levenshtein's framework (see \cite{Lev}, \cite[Chapter 5]{BHS}).
First, we recall two classical notions. The {\em Delsarte-Goethals-Seidel lower
bound} $D(n,\tau)$ on
\[ \mathcal{B}(n,\tau):=\min\{|C| : C \subset \mathbb{S}^{n-1} \mbox{ is a spherical $\tau$-design}\} \]
is given by (cf. \cite{DGS})
\begin{equation}
\label{DGS-bound}
 \mathcal{B}(n,\tau) \geq D(n,\tau) := {n+k-2+\varepsilon \choose n-1}+{n+k-2 \choose n-1},
\end{equation}
for every $\tau=2k-1+\varepsilon$, $\varepsilon \in \{0,1\}$. Hereafter we use the parameter $\varepsilon \in \{0,1\}$ to present simultaneously
the cases of odd $\tau=2k-1$ (where $\varepsilon=0$) and even $\tau=2k$ (where $\varepsilon=1$). Note that
\[ D(n,\tau)= p^\varepsilon \sum_{i=0}^{k-1+\varepsilon} r_i,  \]
where $p=1-(-1)^kP_{k}^{1,0}(-1)$ (see \cite[Theorem 2.16]{Lev}).

The {\em Levenshtein upper bound} on $\mathcal{A}(n,s)$ can be described as follows.
Let $t_i^{a,b}$ be the greatest zero of $P_i^{a,b}(t)$, $t_0^{1,1}:=-1$, and denote by $I_\tau$ the interval
\[ I_\tau:=\left[t_{k-1+\varepsilon}^{1,1-\varepsilon},t_k^{1,\varepsilon}\right]. \]
Applying Theorem \ref{thm-lp-sph} for suitable polynomials (see \eqref{lev-poly} below)
out of the class  $B_{n,s,}$, $s \in I_\tau$, defined in \eqref{gform}, Levenshtein (see \cite{Lev}) proved that \begin{equation}
\label{L-bound}
 \mathcal{A}(n,s) \leq
L_{\tau}(n,s):= \left(1-\frac{P_{k-1+\varepsilon}^{1,0}(s)}{P_k^{0,\varepsilon}(s)}\right) \sum_{i=0}^{k-1+\varepsilon} r_i,  \ \
\forall s \in I_{\tau}.       \end{equation}

A very important connection between the bounds (\ref{DGS-bound}) and (\ref{L-bound}) is given by the equalities
\begin{equation}
\label{L-DGS1}
L_{\tau-1-\varepsilon}(n,t_{k-1-\varepsilon}^{1,1-\varepsilon})=
L_{\tau-\varepsilon}(n,t_{k-1-\varepsilon}^{1,1-\varepsilon}) = D(n,\tau-\varepsilon)
\end{equation}
at the ends of the intervals $I_\tau$.

The strict monotonicity in $\tau$ of $D(n,\tau)$ implies that for every fixed dimension $n$ and cardinality $N$ there is a unique
\begin{equation}
\label{tauNn}
\tau:=\tau(n,N) \mbox{ such that } N \in (D(n,\tau),D(n,\tau+1)].
\end{equation}
Further, with $k= \left\lceil \frac{\tau+1}{2} \right\rceil$, let
$\alpha_{k+\varepsilon}=s$ be the unique solution of
\begin{equation} \label{Lns_eq}
N=L_{\tau}(n,s), \ s \in I_\tau,
\end{equation}
which exists because of the relations \eqref{L-DGS1} and the strict monotonicity in $s$ of $L_{\tau}(n,s)$.
Then, as described by Levenshtein in \cite[Section 5]{Lev}, there exist
uniquely determined quadrature nodes
\begin{equation} \label{quadrature_nodes}
-1 \leq \alpha_1 < \alpha_2 < \cdots <\alpha_{k+\varepsilon} < 1
\end{equation}
and corresponding positive weights
\begin{equation} \label{quadrature_weights}
\rho_1,\rho_2,\ldots,\rho_{k+\varepsilon},
\end{equation}
such that the following $1/N$-quadrature rule (see Definition  \ref{1/N-quadrature}) holds:
\begin{equation}
\label{defin_qf}
f_0= \frac{f(1)}{N}+\sum_{i=1}^{k+\varepsilon} \rho_i f(\alpha_i), \ \ \forall  f\in \mathcal{P}_{\tau}
\end{equation}
(this is the first level $1/N$-quadrature rule, exact for $\mathcal{P}_{\tau}$).
The numbers $\alpha_i$, $i=1,2,\ldots,k+\varepsilon$, are the roots of the equation
\begin{equation}
\label{def-alfi}
(t+1)^{\varepsilon} \left(P_k^{1,\varepsilon}(t)P_{k-1}^{1,\varepsilon}(\alpha_{k+\varepsilon}) -
P_{k-1}^{1,\varepsilon}(t)P_k^{1,\varepsilon}(\alpha_{k+\varepsilon})\right)=0.
\end{equation}
In fact, the nodes $\{\alpha_i\}$
are the roots of the Levenshtein polynomials $f_{\tau}^{(n,\alpha_{k+\varepsilon})}(t)$
(see \cite[Equations (5.81) and (5.82)]{Lev}) used
for obtaining \eqref{L-bound}; i.e.,
\begin{equation}
\label{lev-poly}
f_{\tau}^{(n,\alpha_{k+\varepsilon})}(t)=(t+1)^{\varepsilon}
\left((P_k^{1,\varepsilon}(t)P_{k-1}^{1,\varepsilon}(\alpha_{k+\varepsilon}) -
P_{k-1}^{1,\varepsilon}(t)P_k^{1,\varepsilon}(\alpha_{k+\varepsilon})\right)^2.
\end{equation}

The $1/N$-quadrature
rule given by \eqref{defin_qf} is an important ingredient in the derivation and investigation of the ULB from \cite{BDHSS}
and their test functions \eqref{TestFunctions}.

Levenshtein's bound \eqref{L-bound} is usually utilized to derive the bound in Theorem \ref{LevUpper} (see also Theorem \ref{LevULBCombined} that establishes that $\mathcal{P}_{\tau}$ is a ULB-space). The largest solution $\alpha_{k+\varepsilon}$ of the equation $N=L_\tau(n,s)$, $\tau=2k-1+\varepsilon$, $\varepsilon \in \{0,1\}$,
is at most $s(n,N)$; 
\begin{equation}
\label{snm-L}
s(n,N) \geq \alpha_{k+\varepsilon}
\end{equation}
which we refer to as the first-level Levenshtein bound for $s(n,N)$; Theorem \ref{lev_bound-imrovement} provides a second-level bound for $s(n,N)$.

\subsection{Hermite interpolation for energy bounds}

Following \cite{Y,CK} (see also \cite{CW,BDHSS}) we use Hermite interpolation to construct a polynomial $f$ upper bounded
by the potential $h$.

\begin{definition}
\label{hermite-notation}
Let $T=\{t_1 \leq t_2 \leq \cdots \leq t_\ell\}$ be an interpolation (multi)set in the interval $[-1,1)$.  We say that two functions $g$ and $h$ {\em agree on $T$}
if for each $t\in T$ with multiplicity $m_t$   the $j$-th order derivatives of $g$ and $h$ exist and agree at $t$ for $j=0,1,2\ldots, m_t-1$.  If $g$ and $h$ agree on $T$, we write $g|_T=h|_T$.
For a polynomial $g$ of degree  $\ell$  with  roots $T$ (counted with their multiplicities) in the interval $[-1,1)$ and a sufficiently smooth function $h$, we denote by $H(h;T)=H(h;g)$  the  Hermite interpolating polynomial of degree at most $\ell-1$   to be the unique polynomial of degree at most $\ell-1$  that agrees with $h$ on $T$ (or the roots of $g$).
Further, for a given subspace $\Lambda \subset C([-1,1])$, we denote by $H_\Lambda(h;T)=H_\Lambda(h;g)$ any function in $\Lambda$ that agrees with $h$ on $T$ (or the roots of $g$), if such exists.
\end{definition}

With the above notation, we remark that the ULB result was obtained in \cite{BDHSS} by using the polynomials
\begin{equation}
\label{OptPoly_Lev}
f =H(h; (\cdot-s)f_{\tau}^{(n,s)} ).%=H_{\mathcal{P}_{\tau}}(h; (t-s)f_{\tau}^{(n,s)} (t)).
\end{equation}

\begin{theorem}[\cite{BDHSS}, Theorem 3.1]\label{thm3.2} Let $n$, $N$
be fixed and $h$ be an absolutely monotone potential on $[-1,1]$.
Suppose that $\tau=\tau(n,N)=2k-1+\varepsilon$ is as in \eqref{tauNn} and let
the associated $1/N$-quadrature nodes and weights  $\alpha_i$ and $\rho_i$, $i=1,2,\ldots,k+\varepsilon$,
be as in \eqref{Lns_eq}--\eqref{def-alfi}.
Then
\begin{equation}
\label{bound_odd}
 {\mathcal E}_h(n,N) \geq R_{\tau}(n,N;h):=N^2\sum_{i=1}^{k+\varepsilon} \rho_i h(\alpha_i) .
 \end{equation}
Moreover, the polynomials $f(t)$ defined by \eqref{OptPoly_Lev} provide the unique
optimal solution of the linear program \eqref{Wdef} for the subspace $\Lambda=\mathcal{P}_\tau$, and consequently
\begin{equation}\label{LP_optimality}
\mathcal{W}_{h,\mathcal{P}_\tau}(n,N)=R_{\tau}(n,N;h).
\end{equation}
\end{theorem}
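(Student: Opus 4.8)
The plan is to reduce everything to producing a single polynomial $f\in\mathcal{P}_\tau\cap A_{n,h}$ that agrees with $h$ at all the Levenshtein nodes $\alpha_1,\dots,\alpha_{k+\varepsilon}$, and then to quote Theorem~\ref{THM_subspace}. The first-level quadrature \eqref{defin_qf} is already available: it is exact for $\mathcal{P}_\tau$, its nodes are the $\alpha_i$, and its weights $\rho_i$ are positive. Hence, once such an $f$ is in hand, Theorem~\ref{THM_subspace} applied with $\Lambda=\mathcal{P}_\tau$ yields \emph{simultaneously} the lower bound $\mathcal{E}_h(n,N)\ge N^2\sum_{i}\rho_i h(\alpha_i)=R_\tau(n,N;h)$ of \eqref{bound_odd} and the $\mathcal{P}_\tau$-LP-optimality of $f$, that is \eqref{LP_optimality}. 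Note that the value of the objective is forced: the moment $f$ interpolates $h$ at the nodes, exactness of \eqref{defin_qf} gives $f_0-f(1)/N=\sum_i\rho_i f(\alpha_i)=\sum_i\rho_i h(\alpha_i)$, so $N^2\bigl(f_0-f(1)/N\bigr)=R_\tau(n,N;h)$.

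The candidate is the Hermite interpolant $f$ of \eqref{OptPoly_Lev}, which imposes a double contact at each interior node $\alpha_i$, a simple contact at $-1$ when $\varepsilon=1$, and the prescribed extra contact at $s=\alpha_{k+\varepsilon}$. Two membership properties must be verified. First, $f\le h$ on $[-1,1]$: writing the Hermite remainder as $h(t)-f(t)=\tfrac{h^{(\ell)}(\xi_t)}{\ell!}\,w(t)$, where $w$ is the governing node polynomial of degree $\ell$, absolute monotonicity of $h$ gives $h^{(\ell)}\ge0$, while the construction is arranged so that $w\ge0$ on $[-1,1]$ (the interior nodes enter with even multiplicity and $(t+1)^\varepsilon\ge0$); hence $h-f\ge0$. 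Along the way one checks that the interpolant in fact has degree at most $\tau$, so that $f\in\mathcal{P}_\tau$. Second, and this is the essential point, $f$ must be \emph{positive definite}, i.e.\ $f_i\ge0$ for $i\ge1$ in its Gegenbauer expansion, which is exactly what places $f$ in $A_{n,h}$.

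I expect the positive-definiteness step to be the main obstacle; the sign condition $f\le h$ and the degree bound are comparatively routine consequences of absolute monotonicity and the multiplicity bookkeeping. Here the Levenshtein structure is indispensable: rather than estimating the coefficients $f_i$ directly, one expresses them through the Gauss-type adjacent quadrature underlying \eqref{defin_qf} (whose nodes are the roots of the polynomials built from $P_k^{1,\varepsilon}$ and $P_{k-1}^{1,\varepsilon}$) and then reduces their nonnegativity to the Krein and strengthened Krein positivity of the products $P_i^{(n)}(t)P_j^{(n)}(t)$ and $(t+1)P_i^{1,1}(t)P_j^{1,1}(t)$ from \eqref{krein}--\eqref{s-krein}. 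This is the heart of the argument and the place where the interplay between the Gegenbauer and adjacent polynomials is used in full.

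Finally, for uniqueness I would argue at the level of the equality conditions. Any $\mathcal{P}_\tau$-LP-optimal $g$ must attain equality in \eqref{Wineq1}, and since all weights $\rho_i$ are positive this forces $g(\alpha_i)=h(\alpha_i)$ for every $i$; combined with $g\le h$ it makes each interior $\alpha_i$ a double root of the nonnegative polynomial $h-g$ and pins the remaining contact data at $-1$ and at $s$. These interpolation conditions number exactly $\dim\mathcal{P}_\tau=\tau+1$, so the interpolation problem admits at most one solution in $\mathcal{P}_\tau$; as $f$ is such a solution, $g=f$.
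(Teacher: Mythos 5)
Your proposal follows essentially the same route as the paper, which itself defers the details of this statement to \cite{BDHSS}: construct the Hermite interpolant at the Levenshtein nodes, verify $f\le h$ via the remainder formula and positive definiteness via the partial-product/adjacent-polynomial and Krein-condition machinery (the first-level instance of Lemma \ref{lem1} and Theorem \ref{implications-by-pps}), then invoke Theorem \ref{THM_subspace} with the quadrature \eqref{defin_qf} to get the bound, the optimality, and (via the forced equalities $g(\alpha_i)=h(\alpha_i)$ and the resulting double contacts) uniqueness. The one step you rightly flag as the heart of the matter, the nonnegativity of the Gegenbauer coefficients, is identified but not executed; this is exactly the part the present paper also leaves to \cite{BDHSS}, so your outline is faithful to the intended argument.
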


Using Definition \ref{def-ULB-space} and Theorem \ref{thm3.2} the Levenshtein bounds and our ULB can be unified, in a sense.

\begin{theorem}\label{LevULBCombined}
Let $n\geq 2$ and $\tau$ be positive integers.  If $N\in \left(D(n,\tau),D(n,\tau+1)\right]$, then
 $\mathcal{P}_{\tau}$ is a ULB-space for dimension $n$ and cardinality $N$.
\end{theorem}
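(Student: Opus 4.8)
The plan is to verify directly the two defining properties of a ULB-space from Definition \ref{def-ULB-space}: one must exhibit (i) a $1/N$-quadrature rule exact for $\mathcal{P}_\tau$, and (ii) for every absolutely monotone $h$ an interpolant $f\in\mathcal{P}_\tau\cap A_{n,h}$ agreeing with $h$ at the quadrature nodes. Essentially all of the analytic input needed is already packaged in the Levenshtein construction culminating in \eqref{defin_qf} and in the Hermite-interpolation machinery of Lemma \ref{lem1} and Theorem \ref{implications-by-pps}; the proof therefore consists of assembling these pieces and checking that the nodes and multiplicities match up.

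First I would dispatch condition (i). The hypothesis $N\in\bigl(D(n,\tau),D(n,\tau+1)\bigr]$ is precisely the defining range in \eqref{tauNn}, so $\tau=\tau(n,N)=2k-1+\varepsilon$ with $\varepsilon\in\{0,1\}$ and $k=\lceil(\tau+1)/2\rceil$. The boundary identities \eqref{L-DGS1} together with the strict monotonicity of $s\mapsto L_\tau(n,s)$ on $I_\tau$ guarantee that \eqref{Lns_eq}, namely $N=L_\tau(n,s)$, has a unique solution $s=\alpha_{k+\varepsilon}\in I_\tau$. Levenshtein's theory then produces the nodes \eqref{quadrature_nodes}, the positive weights \eqref{quadrature_weights}, and the quadrature \eqref{defin_qf}, which is exact for $\mathcal{P}_\tau$; this is exactly a $1/N$-quadrature rule in the sense of Definition \ref{1/N-quadrature}, establishing (i) with $\Lambda=\mathcal{P}_\tau$.

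For condition (ii) I would, for an arbitrary absolutely monotone $h$, take the interpolant $f$ of \eqref{OptPoly_Lev}, i.e. $f=H(h;T)$ where $T$ consists of the quadrature nodes $\alpha_i$ taken with the multiplicities dictated by \eqref{def-alfi}--\eqref{lev-poly}: a double (tangency) node at each interior $\alpha_i$ and a single node at $\alpha_1=-1$ when $\varepsilon=1$. Counting these gives $2k+\varepsilon=\tau+1$ interpolation conditions, so $f\in\mathcal{P}_\tau$, and by construction $f$ agrees with $h$ at every $\alpha_i$, which is exactly the agreement required in (ii). It remains to certify $f\in A_{n,h}$, and here I would invoke Theorem \ref{implications-by-pps} with $\Lambda=\mathcal{P}_\tau$: writing the partial products $g_j$ as in \eqref{partial-1} and expanding $f$ through Lemma \ref{lem1} as a combination of the building-block interpolants $H(g_j;T)$ with coefficients $h[t_1,\dots,t_j]\ge 0$ (nonnegative by absolute monotonicity), it suffices to show $H(g_j;T)\in A_{n,g_j}$ for each $j=0,1,\dots,\tau$.

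The \emph{main obstacle} is exactly this last verification, which splits into the domination bound $H(g_j;T)\le g_j$ on $[-1,1)$ and the positive-definiteness $\bigl(H(g_j;T)\bigr)_i\ge 0$. The domination bound is the easier half, following from Rolle's theorem (or the Hermite remainder formula) just as in the proof of Theorem \ref{implications-by-pps}, since each $g_j$ is monic with all its roots in $T$. The genuinely hard part is the positive-definiteness of the partial interpolants, and this is where the special choice of the $\alpha_i$ as Levenshtein nodes --- rather than arbitrary points --- is indispensable. I would establish it by expressing the interpolants through the Christoffel--Darboux formulas relating the Gegenbauer polynomials $P_i^{(n)}$ to their adjacent polynomials $P_i^{a,b}$, and then invoking the Krein positivity of the products in \eqref{krein} together with the strengthened Krein positivity of \eqref{s-krein}; these positivity properties, combined with the factorization of \eqref{lev-poly} as $(t+1)^\varepsilon$ times a perfect square, are what force the Gegenbauer coefficients of each $H(g_j;T)$ to be nonnegative. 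Once all building blocks lie in their respective $A_{n,g_j}$, Theorem \ref{implications-by-pps} yields $f\in\mathcal{P}_\tau\cap A_{n,h}$, completing (ii) and hence showing that $\mathcal{P}_\tau$ is a ULB-space.
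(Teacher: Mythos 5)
Your proposal is correct and follows essentially the same route as the paper: condition (i) is the Levenshtein quadrature \eqref{Lns_eq}--\eqref{defin_qf}, and condition (ii) is the interpolant \eqref{OptPoly_Lev}, which the paper simply outsources to Theorem \ref{thm3.2} (i.e.\ to \cite{BDHSS}) while you unpack it via Lemma \ref{lem1} and Theorem \ref{implications-by-pps} --- exactly the machinery the paper says subsections 2.5--2.6 were set up to provide. The one step you leave at the level of assertion, the positive definiteness of the partial products at the Levenshtein nodes via the Krein conditions and adjacent-polynomial expansions, is precisely the content delegated to \cite{BDHSS}, so your sketch is at the same level of completeness as the paper's own proof (note also that for $\Lambda=\mathcal{P}_\tau$ one has $H(g_j;T)=g_j$ for $j\le\tau$, so the domination half is an identity and only the positive definiteness of the $g_j$ themselves requires work).
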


\begin{proof}
For every $N \in  \left(D(n,\tau),D(n,\tau+1)\right]$ (even if $N$ is real) we have the Levenshtein $1/N$-quadrature rule \eqref{defin_qf} with $N=L_\tau(n,s)$. Then
the existence and uniqueness of the ULB solution for (ii) of Definition \ref{def-ULB-space} follow from Theorem \ref{thm3.2} (see for details \cite{BDHSS}).
\end{proof}

The optimality property \eqref{LP_optimality} of the polynomials \eqref{OptPoly_Lev} implies that the bound \eqref{bound_odd} can be improved by
linear programming only if the degree of the improving polynomial is at least $\tau+1$. It follows from Theorem \ref{THM_subspace}
that for improvements some negative test functions $Q_j^{(n,N)}$, $j \geq \tau+1$, must arise. Furthermore, since $Q_j^{(n,N)}>0$ for $j \in \{\tau+1,\tau+2\}$
(see Theorem 4.3 in \cite{BDHSS}) and every $N$, any second level ULB for $ {\mathcal E}_h(n,N)$
will require polynomials of degree at least $\tau+3$. The same is true for the second level bounds on $\mathcal{A}(n,s)$

In Section 3 we build a framework for the derivation of a second level ULB for
the case when the improving polynomial has degree $\tau+4$, where at least one test function $Q_{\tau+3}^{(n,N)}$ or $Q_{\tau+4}^{(n,N)}$ is negative.

\subsection{Positive definite Hermite interpolants}
Our goal in this section is to present the interpolation framework needed to verify if a subspace is a ULB-subspace.
Let $T=\{t_1 \leq t_2 \leq \cdots \leq t_\ell\}$ be an interpolation (multi)set and
\begin{equation}
\label{partial-1}
g_j(t)=(t-t_1)\cdots(t-t_j), \ j=1,\ldots,\ell, \ \ g_0(t):=1.
\end{equation}
We are interested in solutions to Hermite interpolation problems on $T$
for certain subspaces of polynomials, which we address in the following two statements. General results can be found in \cite{Hau78,Ike73,MP00}.

\begin{lemma}
\label{lem1}
Let $\Lambda$ be a given subspace of $C([-1,1])$, $T$  a multiset in $[-1,1)$ with finite cardinality $\ell$, and     $g_j$   the partial product defined in \eqref{partial-1} for $j=0,1,\ldots,\ell$.  If for each $j=0,1,\ldots,\ell$ there exists an interpolant $H_\Lambda(g_j;T)$, then
for all sufficiently smooth functions $h$, the function
\begin{equation}
\label{ex-h-partial}
H_{\Lambda,T}(h) :=\sum_{j=0}^{\ell-1} h[t_1,\ldots,t_j] H_\Lambda(g_{j};T),
\end{equation}
where $h[t_1,\ldots,t_j]$ denotes the divided difference in the listed nodes, agrees with $h$ on $T$; i.e., $H_{\Lambda,T}(h)=H_\Lambda(h;T)$.
\end{lemma}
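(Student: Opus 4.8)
The plan is to read the defining formula \eqref{ex-h-partial} as the Newton divided-difference form of the classical Hermite interpolant of $h$ on $T$, in which each monic factor $g_j$ from \eqref{partial-1} has been replaced by a function $H_\Lambda(g_j;T)\in\Lambda$ agreeing with $g_j$ on $T$, and then to exploit the fact that the relation ``agrees on $T$'' is linear. First I would write the interpolation conditions as the finitely many linear functionals attached to $T$: if the distinct points of $T$ are the values $\tau$ with multiplicity $m_\tau$, these are the derivative evaluations $L_{\tau,i}(p):=p^{(i)}(\tau)$ for $0\le i<m_\tau$, and by Definition \ref{hermite-notation} the assertion ``$p$ agrees with $q$ on $T$'' means exactly $L_{\tau,i}(p)=L_{\tau,i}(q)$ for every admissible pair $(\tau,i)$. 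The functions vanishing on $T$ thus form the common kernel of the $L_{\tau,i}$, a linear subspace, so agreement on $T$ is preserved under linear combinations.

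The core step is the observation that the coefficients appearing in \eqref{ex-h-partial} are precisely the Newton divided-difference coefficients of $H(h;T)$ relative to the basis $\{g_j\}$; equivalently, substituting $g_j$ for $H_\Lambda(g_j;T)$ throughout \eqref{ex-h-partial} returns the classical confluent Newton expansion of the unique interpolant of degree at most $\ell-1$ agreeing with $h$ on $T$, namely $H(h;T)$. I would treat this Newton expansion as standard, re-deriving it if necessary by induction on the number of nodes; the only subtlety at a repeated node is that the term inserted at stage $j$ carries a zero of the prescribed order there, so it does not disturb the lower-order derivatives already matched. Granting this, the conclusion follows by linearity: since $H_\Lambda(g_j;T)$ agrees with $g_j$ on $T$ for each $j=0,\dots,\ell-1$, the divided-difference combination $H_{\Lambda,T}(h)$ agrees on $T$ with the combination obtained by restoring the $g_j$, which is $H(h;T)$, and hence agrees with $h$ on $T$. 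Being a linear combination of the $H_\Lambda(g_j;T)\in\Lambda$, the function $H_{\Lambda,T}(h)$ itself lies in $\Lambda$, so it is a legitimate choice of $H_\Lambda(h;T)$; this is the claimed identity $H_{\Lambda,T}(h)=H_\Lambda(h;T)$.

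I expect the only genuine obstacle to be the careful treatment of the multiplicities in $T$: one must confirm the confluent Newton form and carry out the functional-by-functional bookkeeping at repeated nodes, where agreement requires matching several successive derivatives and where the existence of those derivatives for each $H_\Lambda(g_j;T)$ is already built into the hypothesis that these interpolants exist. Once the functionals $L_{\tau,i}$ are in place and the confluent Newton expansion is justified, the argument uses nothing about $\Lambda$ beyond the assumed existence of the interpolants $H_\Lambda(g_j;T)$ for $j=0,\dots,\ell-1$ and enough smoothness of $h$ for the divided differences to be defined; in particular the existence of $H_\Lambda(g_\ell;T)$ granted by the hypotheses is not required for this identity.
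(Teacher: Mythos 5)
Your proposal is correct and follows essentially the same route as the paper: both identify \eqref{ex-h-partial} as the confluent Newton expansion of $H(h;T)$ with each partial product $g_j$ replaced by its $\Lambda$-interpolant $H_\Lambda(g_j;T)$, and then conclude by linearity of the relation ``agrees on $T$,'' since $g_j|_T = H_\Lambda(g_j;T)|_T$. Your added bookkeeping with the functionals $L_{\tau,i}$ at repeated nodes, and your remark that $H_\Lambda(g_\ell;T)$ is not actually needed for this identity, are just more explicit versions of what the paper leaves implicit.
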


\begin{proof}
Considering the standard Newton interpolating polynomial to $h$ in $\mathcal{P}_{\ell-1}$,
\[ J_{\ell-1}(t):=\sum_{j=0}^{\ell-1} h[t_1,\ldots,t_j]g_j(t), \]
we  note that $J_{\ell-1}(t)|_{T}=h(t)|_{T}$. As $g_{j-1}|_T=H_\Lambda(g_{j-1};g_\ell)|_T$,
linearity of interpolation yields \eqref{ex-h-partial}.
\end{proof}

The next theorem provides   sufficient conditions for the  interpolant defined in \eqref{ex-h-partial} to belong to $A_{n,h}$.

\begin{theorem}
\label{implications-by-pps}
Let $\Lambda$ be a given subspace of $C([-1,1])$, $h$ be absolutely monotone on $[-1,1]$, $T$ a finite multiset in $[-1,1)$ and $g_j$   as in \eqref{partial-1}. If
$H_\Lambda(g_{j};T)$ exist and belong to $A_{n,g_{j}}$, $j=0,1,\ldots,\ell$, then
$H_\Lambda(h;T)$ defined by \eqref{ex-h-partial}  belongs to $A_{n,h}$.
\end{theorem}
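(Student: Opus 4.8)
The plan is to verify directly the two defining properties of membership in $A_{n,h}$ (see \eqref{fform}) for the function $f:=H_{\Lambda,T}(h)$ given by \eqref{ex-h-partial}: that $f$ is positive definite (its Gegenbauer coefficients $f_i$ are nonnegative for $i\ge 1$), and that $f(t)\le h(t)$ for all $t\in[-1,1]$. Writing $f=\sum_{j=0}^{\ell-1} h[t_1,\ldots,t_j]\,H_\Lambda(g_j;T)$, I would try to read off both properties from the corresponding properties of the building blocks $H_\Lambda(g_j;T)\in A_{n,g_j}$, the crucial auxiliary fact being that all the divided-difference coefficients are nonnegative.

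First I would record that, because $h$ is absolutely monotone, every divided difference $h[t_1,\ldots,t_j]$ is nonnegative; this is immediate from the Hermite--Genocchi (mean-value) representation together with $h^{(i)}\ge 0$, and it persists for the confluent divided differences that arise when $T$ has repeated nodes. Granting this, positive definiteness of $f$ is essentially formal: by hypothesis each $H_\Lambda(g_j;T)$ is positive definite, so for $i\ge 1$ the $i$-th Gegenbauer coefficient of $f$ equals $\sum_{j=0}^{\ell-1} h[t_1,\ldots,t_j]\,\bigl(H_\Lambda(g_j;T)\bigr)_i$, a nonnegative combination of nonnegative numbers. Hence $f$ is positive definite.

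The substance of the theorem is the pointwise bound $f\le h$, and this is the step I expect to be the main obstacle. The natural route is to compare $f$ with the ordinary Newton interpolant $J_{\ell-1}$ from the proof of Lemma \ref{lem1} and to decompose
\[
h-f=\sum_{j=0}^{\ell-1} h[t_1,\ldots,t_j]\bigl(g_j-H_\Lambda(g_j;T)\bigr)+\bigl(h-J_{\ell-1}\bigr).
\]
Each summand in the first group is nonnegative, since the coefficients are nonnegative and $H_\Lambda(g_j;T)\in A_{n,g_j}$ forces $H_\Lambda(g_j;T)\le g_j$. The difficulty is concentrated in the remainder $h-J_{\ell-1}$, which by the standard interpolation error formula equals $h[t_1,\ldots,t_\ell,t]\,g_\ell(t)$, a product of a nonnegative divided difference (with $t\mapsto h[t_1,\ldots,t_\ell,t]$ itself absolutely monotone) and $g_\ell(t)$, so that its sign is governed by that of $g_\ell$. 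This is precisely where the $j=\ell$ hypothesis enters: I would exploit that $H_\Lambda(g_\ell;T)\in A_{n,g_\ell}$ furnishes a positive-definite minorant of $g_\ell$ vanishing on $T$, giving $g_\ell-H_\Lambda(g_\ell;T)\ge 0$, and combine it with the nonnegativity and monotonicity of $h[t_1,\ldots,t_\ell,t]$ to control the remainder. In the configurations of interest the interior nodes of $T$ occur with even multiplicity (the relevant $g_\ell$ arising from a squared Levenshtein-type factor times $(1+t)^\varepsilon$), so that $g_\ell\ge 0$ on $[-1,1]$ and the remainder is manifestly nonnegative; the role of the $j=\ell$ interpolant is to certify this sign information within the subspace framework. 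Once $h-f\ge 0$ is established, combining it with the positive definiteness obtained above yields $f\in A_{n,h}$, completing the argument.
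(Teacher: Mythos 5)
Your proof is correct and follows essentially the same route as the paper's: nonnegativity of the divided differences of an absolutely monotone $h$ gives positive definiteness of $H_\Lambda(h;T)$ as a conic combination of the positive definite $H_\Lambda(g_j;T)$, and the bound $H_\Lambda(h;T)\le h$ comes from the identical decomposition through the Newton interpolant $J_{\ell-1}$, with $H_\Lambda(g_j;T)\le g_j$ (from $H_\Lambda(g_j;T)\in A_{n,g_j}$) controlling the first group and the Hermite remainder formula (equivalently, repeated Rolle as in \cite[Lemma 9]{CW}) giving $h\ge J_{\ell-1}$. The one small inaccuracy is your suggestion that the $j=\ell$ hypothesis ``certifies'' the sign of the remainder: $H_\Lambda(g_\ell;T)\le g_\ell$ does not by itself force $g_\ell\ge 0$, and the nonnegativity of $h-J_{\ell-1}=h[t_1,\ldots,t_\ell,t]\,g_\ell(t)$ really rests on $g_\ell\ge 0$ from the even multiplicities of the nodes, which you correctly identify as the operative reason.
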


\begin{proof}
Since $h(t)$ is absolutely monotone, the divided differences $h[t_1,\ldots,t_j]$ are nonnegative for every $j$ (see, e.g. \cite[Cor. 3.4.2]{D}).
Hence, the interpolant $H_\Lambda(h;T)$ is positive definite since it is in the positive cone of the interpolants
$H_\Lambda(g_{j};T)$, $j=0,1,\ldots,\ell-1$, which are positive definite by assumption.

We now prove that $H_\Lambda(h;T) \leq h(t)$ for every $t \in [-1,1)$. Let, as in Lemma \ref{lem1},
$J_{\ell-1}(t)=H(h;g_\ell)$.
Then $h(t) \geq J_{\ell-1}(t)$ by consecutive applications of Rolle's theorem (see, for example, \cite[Lemma 9]{CW}), or,
alternatively, by the remainder formula for the Hermite interpolation.
Thus it follows from the representation
\[ h(t)-H_\Lambda(h;g_\ell)=(h(t)-J_{\ell-1}(t))+(J_{\ell-1}(t)-H_\Lambda(h;T)) \]
that it is enough to prove that $J_{\ell-1}(t)-H_\Lambda(h;g_\ell) \geq 0$ for every $t \in [-1,1)$.
Since $H_\Lambda(g_{j-1};g_\ell) \in A_{n,g_{j-1}}$ by the assumption, we have
\[ (t-t_1)\cdots(t-t_{j-1}) - H_\Lambda(g_{j-1};g_\ell) \geq 0. \]
Using again that $h[t_1,\ldots,t_j] \geq 0$, the desired result follows from the formula of Lemma \ref{lem1}.
\end{proof}

%\subsection{Levenshtein ULB}

%Now the Levenshtein bounds and our ULB can be unified, in a sense.
%
%\begin{theorem}\label{LevULBCombined}
%Let $n\geq 2$ and $\tau$ be positive integers.  If $N\in \left(D(n,\tau),D(n,\tau+1)\right]$, then
% $\mathcal{P}_{\tau}$ is a ULB-space for dimension $n$ and cardinality $N$.
%\end{theorem}
%
%\begin{proof}
%For every $N \in  \left(D(n,\tau),D(n,\tau+1)\right]$ (even if $N$ is real) we have the Levenshtein $1/N$-quadrature rule \eqref{defin_qf} with $N=L_\tau(n,s)$. Then
%the existence and uniqueness of the ULB solution for (ii) follow from Theorem \ref{thm3.2} (see for details \cite{BDHSS}).
%\end{proof}

In the next section we develop a framework for proving that certain spaces of polynomials
of degrees at most $\tau(n,N)+4$ are also ULB-spaces.

\section{Second level ULB: Lifting the Levenshtein framework}
\label{section-s2a2}

We describe in detail our approach in the case when $n$ and $N$ are such that 
\begin{equation}\label{odd_case}
\tau(n,N)=2k-1,\quad  \mbox{and} \quad Q_{2k+2}^{(n,N)}<0\ \mbox{or}\  Q_{2k+3}^{(n,N)}<0.
\end{equation}
In this case  the {\em skip-two/add-two} subspace defined by \eqref{Lambdakn} becomes
\begin{equation}
\label{def-L-odd}
\Lambda_{n,k} := \mathcal{P}_{2k-1}
\oplus  \mbox{span} \left(P_{2k+2}^{(n)}, P_{2k+3}^{(n)}\right).
\end{equation}
%We develop sufficient conditions that will imply that $\Lambda_{n,k}$ is a ULB-space for dimension $n$ and cardinality $N$. 
The goal will be to derive conditions on $n$ and $N$ for $\Lambda_{n,k}$ to be a ULB-space (see Definition \ref{def-ULB-space}). We focus separately on the existence of a $1/N$-quadrature and on an admissible polynomial interpolant in the subspace associated with this quadrature.

\subsection{Existence of a $1/N$-quadrature rule for the skip-two/add-two subspace $\Lambda_{n,k}$.}

First, we focus on necessary conditions for the existence of a $1/N$-quadrature rule exact on  $\Lambda_{n,k}$ under the assumption \eqref{odd_case}. 
\begin{lemma}
Any $1/N$-quadrature rule that is exact on $\Lambda_{n,k}$ has at least $k+1$ distinct nodes.
\end{lemma}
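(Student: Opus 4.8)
The plan is to argue by contradiction and bound the number of interior nodes from below in two stages. Write the rule as $\{(\alpha_i,\rho_i)\}_{i=1}^{m}$ with $-1\le\alpha_1<\cdots<\alpha_m<1$ and $\rho_i>0$, together with the fixed node at $1$ carrying weight $1/N$; since $\mathcal{P}_{2k-1}\subset\Lambda_{n,k}$, the rule is in particular exact on $\mathcal{P}_{2k-1}$. I want to conclude $m\ge k+1$.

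First I would rule out $m\le k-1$ by exhibiting an obstructing polynomial. Consider $w(t):=(1-t)\prod_{i=1}^{m}(t-\alpha_i)^2$, which has degree $2m+1\le 2k-1$ and hence lies in $\mathcal{P}_{2k-1}\subset\Lambda_{n,k}$. It is nonnegative on $[-1,1]$ and vanishes at every node $\alpha_i$ as well as at $1$, so exactness forces $w_0=\tfrac{w(1)}{N}+\sum_i\rho_i w(\alpha_i)=0$. On the other hand $w_0=\int_{-1}^1 w\,d\mu>0$, because $w$ is a nonzero nonnegative polynomial and $\mu$ has full support $[-1,1]$. This contradiction gives $m\ge k$.

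The substantive step is to exclude $m=k$, and this is exactly where hypothesis \eqref{odd_case} enters. If $m=k$, the rule is a $1/N$-quadrature with exactly $k$ interior nodes that is exact on $\mathcal{P}_{2k-1}$; by the uniqueness of the Levenshtein quadrature \eqref{defin_qf} for the given pair $(n,N)$ with $\tau(n,N)=2k-1$, it must coincide with it, so its nodes and weights are the Levenshtein ones and the associated test quantities \eqref{TestFunctions} are precisely the $Q_j^{(n,N)}$. But exactness on the two added basis elements $P_{2k+2}^{(n)},P_{2k+3}^{(n)}$ of $\Lambda_{n,k}$ means, using $\int P_j^{(n)}\,d\mu=0$ and $P_j^{(n)}(1)=1$ for $j\ge1$, that $Q_{2k+2}^{(n,N)}=Q_{2k+3}^{(n,N)}=0$, which contradicts the assumption in \eqref{odd_case} that at least one of these two quantities is strictly negative. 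Hence $m\neq k$, and together with $m\ge k$ this yields $m\ge k+1$.

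The main obstacle is the uniqueness invoked in the last step: one must know that a $1/N$-quadrature with $k$ interior nodes and positive weights that is exact on $\mathcal{P}_{2k-1}$ is forced to be the Levenshtein rule (a Gauss--Radau-type uniqueness). It is precisely this identification that transfers the sign information on $Q_{2k+2}^{(n,N)},Q_{2k+3}^{(n,N)}$, which is known only for the Levenshtein quadrature, to the hypothetical rule. An elementary test-polynomial argument in the spirit of the first step is unavailable here, since for $m=k$ the natural candidate $(1-t)\prod_{i=1}^{k}(t-\alpha_i)^2$ has degree $2k+1$ and its Gegenbauer coefficients of indices $2k$ and $2k+1$ cannot be removed within $\Lambda_{n,k}$; this is exactly why the added directions $P_{2k+2}^{(n)},P_{2k+3}^{(n)}$ together with the standing assumption \eqref{odd_case} are needed to force the extra node.
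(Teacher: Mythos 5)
Your proposal is correct and follows essentially the same two-step argument as the paper: the case of fewer than $k$ nodes is excluded by integrating the nonnegative polynomial $(1-t)\prod_i(t-\alpha_i)^2\in\mathcal{P}_{2k-1}$, and the case of exactly $k$ nodes is excluded by identifying the rule with the Levenshtein quadrature (via the Gauss--Radau-type uniqueness you flag, which the paper likewise invokes through \eqref{def-alfi}) and observing that exactness on $P_{2k+2}^{(n)}$ and $P_{2k+3}^{(n)}$ would force $Q_{2k+2}^{(n,N)}=Q_{2k+3}^{(n,N)}=0$, contradicting \eqref{odd_case}.
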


\begin{proof} Let $\{ \beta_i,\theta_i\}_{i=1}^\ell$ be such a quadrature rule. If $\ell < k$, then $(t-\beta_1 )^2 \dots (t-\beta_\ell )^2(1-t) \in \Lambda_{n,k}$ and hence
\[
\int_{-1}^1 (t-\beta_1 )^2 \dots (t-\beta_\ell )^2(1-t) \, d\mu(t) =0,
\]
which is absurd. If $\ell=k$, then we are exactly in the condition of the Levenshtein quadrature with $N=L(n,\beta_\ell)$. The remaining nodes $\{\beta_i\}_{i=1}^{k-1}$ are uniquely determined and satisfy \eqref{def-alfi}, and therefore, the quadrature is the Levenshtein quadrature.
However, if the Levenshtein quadrature is exact for $P_{2k+2}^{(n)}$ and $ P_{2k+3}^{(n)}$, then the test functions $Q_{2k+2}^{(n,N)}$ and $Q_{2k+3}^{(n,N)}$ vanish, which contradicts the hypothesis.
\end{proof}

Suppose now that $\{ (\beta_i , \theta_i )\}_{i=1}^{k+1}$ is a $1/N$-quadrature rule with exactly $k+1$ nodes that is exact on the subspace $\Lambda_{n,k}$, namely, $-1\leq\beta_1<\dots<\beta_{k+1}<1$, $\theta_i>0, \ i=1,\dots,k+1$, and
\begin{equation}
\label{quad-2-level}
f_0=\int_{-1}^1f(t)d\mu(t)=\frac{f(1)}{N}+\sum_{i=1}^{k+1} \theta_i f(\beta_i)
\end{equation}
holds for every polynomial from the subspace $\Lambda_{n,k}$ defined in \eqref{def-L-odd}. We define the {\sl partial products} associated with the $1/N$-quadrature rule as 
\begin{equation}\label{ParProd}
q_j(t):=(t-\beta_1)\cdots(t-\beta_j), \quad j=1,2,\ldots,k+1, \quad q_0(t)\equiv 1.
\end{equation}
The following theorem provides some insight on how to determine this quadrature and its relation to the Levenshtein $1/N$-quadrature $\{ (\alpha_i,\rho_i )\}_{i=1}^k$.

\begin{theorem}\label{IL}
If $\{ (\beta_i , \theta_i )\}_{i=1}^{k+1}$ is a $1/N$-quadrature rule exact on the subspace $\Lambda_{n,k}$, then there are constants $c_1$,  $c_2$, and $c_3$ such that (compare to \eqref{def-alfi} with $\varepsilon=0$)
 \begin{equation}
\label{beti-1}
a_{k+1,k+1}^{1,0} q_{k+1} (t)=P_{k+1}^{1,0}(t)+c_1P_{k}^{1,0}(t)+c_2P_{k-1}^{1,0}(t)+c_3P_{k-2}^{1,0}(t),
\end{equation}
where $a_{k+1,k+1}^{1,0}$ denotes the leading coefficient of the polynomial $P_{k+1}^{1,0}(t)$. 
Moreover, $(\beta_i)_{i=1}^{k+1}$ interlace with the nodes $(\alpha_i)_{i=1}^k$ of the Levenshtein quadrature for $\mathcal{P}_{2k-1}$ given in \eqref{quadrature_nodes}-\eqref{defin_qf}; i.e.,
\begin{equation} \label{interlacing}
-1\leq \beta_1<\alpha_1<\dots<\beta_k<\alpha_k<\beta_{k+1}<1 .
\end{equation}
\end{theorem}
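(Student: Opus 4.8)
The plan is to treat the two assertions separately: first derive the expansion \eqref{beti-1} from the exactness of the rule on the ``bottom'' part $\mathcal{P}_{2k-1}$ of $\Lambda_{n,k}$, and then obtain the interlacing \eqref{interlacing} by comparing the new rule against the Levenshtein rule $\{(\alpha_i,\rho_i)\}_{i=1}^k$ on that common subspace.

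For \eqref{beti-1}: for any polynomial $r$ with $\deg r\le k-3$ the polynomial $(1-t)\,q_{k+1}(t)\,r(t)$ has degree at most $2k-1$, hence lies in $\mathcal{P}_{2k-1}\subset\Lambda_{n,k}$, and it vanishes at $t=1$ and at every node $\beta_i$. Feeding it into \eqref{quad-2-level} gives $\int_{-1}^1 (1-t)q_{k+1}r\,d\mu=0$; since $d\nu^{1,0}=c^{1,0}(1-t)\,d\mu$ by \eqref{nuab}, this is exactly $\langle q_{k+1},r\rangle_{1,0}=0$. Thus $q_{k+1}$ is $\nu^{1,0}$-orthogonal to $\mathcal{P}_{k-3}$, so its expansion in the adjacent basis $\{P_i^{1,0}\}$ has no components below degree $k-2$; matching the leading coefficient of the monic $q_{k+1}$ against $P_{k+1}^{1,0}$ yields \eqref{beti-1} (the constants $c_1,c_2,c_3$ and the weights $\theta_i$ are then determined by the remaining quadrature conditions, i.e.\ exactness on $P_{2k+2}^{(n)},P_{2k+3}^{(n)}$ and normalization).

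For the interlacing, I would first record that both rules are exact on $\mathcal{P}_{2k-1}$ --- the Levenshtein rule by \eqref{defin_qf}, the new rule because $\mathcal{P}_{2k-1}\subset\Lambda_{n,k}$ --- so $\sum_{i=1}^k\rho_i f(\alpha_i)=\sum_{i=1}^{k+1}\theta_i f(\beta_i)$ for every $f\in\mathcal{P}_{2k-1}$. Applying this to $f=q_{k+1}r$ with $\deg r\le k-2$ makes the right-hand side vanish (each $q_{k+1}(\beta_i)=0$), so $\sum_{i=1}^k\rho_i q_{k+1}(\alpha_i)r(\alpha_i)=0$ for all such $r$. Writing $u(t):=\prod_{i=1}^k(t-\alpha_i)$ (a scalar multiple of the Levenshtein node polynomial in \eqref{def-alfi}), the numbers $\{\rho_i q_{k+1}(\alpha_i)\}$ annihilate $\mathcal{P}_{k-2}$ under point evaluation at the distinct nodes $\alpha_i$, so they span the one-dimensional annihilator and are proportional to $\{1/u'(\alpha_i)\}$: there is a constant $\gamma$ with $\rho_i q_{k+1}(\alpha_i)=\gamma/u'(\alpha_i)$ for $i=1,\dots,k$. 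Since $\rho_i>0$ and $\operatorname{sgn}u'(\alpha_i)=(-1)^{k-i}$, the values $q_{k+1}(\alpha_1),\dots,q_{k+1}(\alpha_k)$ strictly alternate in sign once $\gamma\neq 0$; by the intermediate value theorem $q_{k+1}$ then has a zero in each gap $(\alpha_i,\alpha_{i+1})$, producing $k-1$ of its $k+1$ zeros, one strictly between consecutive $\alpha_i$.

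The remaining, and main, difficulty is to place the last two zeros at the two ends, i.e.\ to show one zero lies in $(-1,\alpha_1)$ and one in $(\alpha_k,1)$ rather than both being bunched inside. Because $q_{k+1}(1)=\prod_i(1-\beta_i)>0$ and $\operatorname{sgn}q_{k+1}(\alpha_k)=\operatorname{sgn}\gamma$ (as $u'(\alpha_k)>0$), a sign change --- hence a zero --- occurs in $(\alpha_k,1)$ exactly when $\gamma<0$; the analogous computation at $-1$ forces a zero in $(-1,\alpha_1)$ under the same condition, and these two together exhaust the zero count and give \eqref{interlacing}. Everything thus reduces to the single inequality $\gamma<0$ (equivalently $q_{k+1}(\alpha_k)<0$), which I expect to be the crux. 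I would pin its sign by evaluating the common functional on the Gegenbauer polynomial $P_{2k}^{(n)}$, which sits just above $\mathcal{P}_{2k-1}$: comparing the two rules there gives $\gamma=\bigl(Q_{2k}^{(n,N)}-\widetilde Q_{2k}\bigr)/a_{2k,2k}^{0,0}$, where $\widetilde Q_{2k}$ is the test function \eqref{TestFunctions} of the new rule and $a_{2k,2k}^{0,0}>0$ is the leading coefficient of $P_{2k}^{(n)}$. One then uses the positivity of the weights $\theta_i$ together with the location $\alpha_k\in I_\tau$ (so $\alpha_k\le t_k^{1,0}$) and $\beta_{k+1}<1$ to fix the sign; the standing assumption \eqref{odd_case} enters precisely here, as it is what prevents the new rule from degenerating onto the Levenshtein rule (the case $\gamma=0$, in which every $\alpha_i$ would be a node $\beta_i$). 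The boundary situation $\beta_1=-1$ is handled by the same bookkeeping, with $\beta_1$ itself serving as the left endpoint zero.
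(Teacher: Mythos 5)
Your derivation of \eqref{beti-1} is correct and coincides with the paper's: testing $(1-t)q_{k+1}(t)P_i^{1,0}(t)$, $i\le k-3$, against \eqref{quad-2-level} gives $\nu^{1,0}$-orthogonality of $q_{k+1}$ to $\mathcal{P}_{k-3}$, hence the four-term expansion. The interlacing part, however, contains a genuine gap. Your Gauss-quadrature-style argument correctly yields $\rho_i\,q_{k+1}(\alpha_i)=\gamma/u'(\alpha_i)$, and hence, when $\gamma\neq 0$, strict sign alternation of $q_{k+1}$ at the Levenshtein nodes and one root in each gap $(\alpha_i,\alpha_{i+1})$. But the whole conclusion then rests on $\gamma<0$ (equivalently $q_{k+1}(\alpha_k)<0$): if $\gamma>0$, the same alternation is consistent with the two remaining roots both lying in $(\alpha_k,1)$, both lying in $[-1,\alpha_1)$, or forming a triple in a single gap, all of which violate \eqref{interlacing}; and if $\gamma=0$ the alternation argument collapses entirely. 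You explicitly leave this sign undetermined, and the route you sketch does not close: the identity $\gamma=\bigl(Q_{2k}^{(n,N)}-\widetilde Q_{2k}\bigr)/a$, with $a$ the leading coefficient of $P_{2k}^{(n)}$, is correct, but while $Q_{2k}^{(n,N)}>0$ is known, nothing you invoke (positivity of the $\theta_i$, $\alpha_k\le t_k^{1,0}$, $\beta_{k+1}<1$) controls the sign or size of the second-level test value $\widetilde Q_{2k}$, so the inequality $\widetilde Q_{2k}>Q_{2k}^{(n,N)}$ is unproved. The case $\gamma=0$ is also not dispatched: it gives $\{\alpha_1,\dots,\alpha_k\}\subseteq\{\beta_1,\dots,\beta_{k+1}\}$, which is not literally the Levenshtein rule and needs a further argument before \eqref{odd_case} can be contradicted.

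The paper avoids this issue by a different mechanism: it applies both quadratures \eqref{defin_qf} and \eqref{quad-2-level} to explicit sign-definite polynomials of degree $2k-1$. For instance, the Levenshtein polynomial $f_{2k-1}^{(n,\alpha_k)}(t)=(t-\alpha_1)^2\cdots(t-\alpha_{k-1})^2(t-\alpha_k)$ satisfies $\sum_i\theta_i f_{2k-1}^{(n,\alpha_k)}(\beta_i)=0$ and is nonpositive on $[-1,\alpha_k]$, which immediately forces $\beta_{k+1}>\alpha_k$; the polynomial $(t-\alpha_k)f_{2k-1}^{(n,\alpha_k)}(t)/(t-\alpha_1)$ gives $\beta_1<\alpha_1$, and further quotients place one $\beta_j$ in each gap. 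If you wish to keep your framework, the cleanest repair is to prove $q_{k+1}(\alpha_k)<0$ by exactly such a test (note $\sum_i\theta_i f_{2k-1}^{(n,\alpha_k)}(\beta_i)=0$ with all terms nonpositive unless some $\beta_i>\alpha_k$); at that point your alternation bookkeeping does finish the count.
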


\begin{proof} Since the degrees
of the polynomials $q_{k+1}(t)(1-t)P_i^{1,0}(t)$ for $i=0,1,\ldots,k-3$ do not exceed $2k-1$, these polynomials belong to $\Lambda_{n,k}$.
As they annihilate the quadrature, we obtain that $q_{k+1}(t)$ is orthogonal to the adjacent polynomials $P_i^{1,0}(t)$, $i=0,1,\ldots,k-3$,
with respect to the adjacent measure $d\nu^{1,0}(t):=(1-t) d \mu(t)$ (see \eqref{nuab}). Therefore, the expansion of $q_{k+1}(t)$
in terms of the polynomials $P_i^{1,0}$ has at most four non-zero terms and \eqref{beti-1} follows.

To prove the interlacing property \eqref{interlacing} of the nodes, we use suitable polynomials of degree $2k-1$ simultaneously in the
quadratures \eqref{defin_qf} and \eqref{quad-2-level}. We first prove that $\beta_{k+1}>\alpha_{k}$ and $\beta_1<\alpha_1$.
Applying \eqref{defin_qf} and \eqref{quad-2-level} to the Levenshtein polynomial 
\[f_{2k-1}^{(n,\alpha_{k})}(t)=
(t-\alpha_1)^2(t-\alpha_2)^2\cdots(t-\alpha_{k-1})^2(t-\alpha_k)\]  
yields that
\[ \sum_{i=1}^{k+1} \theta_i f_{2k-1}^{(n,\alpha_{k})}(\beta_i)=0. \]
If $\beta_{k+1} \leq \alpha_{k}$, then the last sum consists of $k+1$ nonpositive terms and they cannot be all equal to zero
(otherwise $\{\beta_1,\beta_2,\ldots,\beta_{k+1}\} \subseteq \{\alpha_1,\alpha_2,\ldots,\alpha_k\}$, which is impossible). Thus,
the sum is negative, a contradiction that yields $\beta_{k+1}>\alpha_k$.

Similarly, \eqref{defin_qf} and \eqref{quad-2-level} applied to the polynomial $f_1(t):=(t-\alpha_k)f_{2k-1}^{(n,\alpha_{k})}(t)/(t-\alpha_1)$ shows that
\[ \sum_{i=1}^{k+1} \theta_i f_1(\beta_{i})=0, \]
implying a contradiction if $\beta_{1} \geq \alpha_{1}$; i.e., we have $\beta_1<\alpha_1$.

We proceed with separating $\beta_2,\ldots,\beta_k$ to interlace $\alpha_1,\ldots,\alpha_k$ as follows. Consider the polynomial
\[ f_2(t):=\frac{(t-\beta_1)(t-\alpha_k)f_{2k-1}^{(n,\alpha_{k})}(t)}{(t-\alpha_1)(t-\alpha_2)}. \]
Applying both \eqref{defin_qf} and \eqref{quad-2-level} to $f_2(t)$ we conclude as above that
there is some $\beta_i$, $i \in \{2,3,\ldots,k\}$, in the interval $(\alpha_1,\alpha_2)$. Continuing
this way (consecutively getting $\beta_i$'s in intervals $(\alpha_j,\alpha_{j+1})$)
we conclude that every interval  $(\alpha_i,\alpha_{i+1})$, $i=1,2,\ldots,k-1$, contains
some $\beta_j$. Since the number of $\beta_j$'s to be placed that way is equal to the number of the
intervals for placing them, the proof of the interlacing is completed.
\end{proof}

For arbitrary real numbers $c_1$, $c_2$, $c_3$, set 
\begin{equation} \label{rk+1}r_{k+1}(t):=P_{k+1}^{1,0}(t)+c_1P_{k}^{1,0}(t)+c_2P_{k-1}^{1,0}(t)+c_3P_{k-2}^{1,0}(t).\end{equation}
We seek necessary conditions on the coefficients $c_1$, $c_2$, and $c_3$ in \eqref{beti-1}, so that a $1/N$-quadrature \eqref{quad-2-level} exists. 
Consider polynomials of degree $2k+3$ that lie in the subspace $\Lambda_{n,k}$ and have the form
\begin{equation} \label{Poly}
L(t):=r_{k+1}(t) (1-t) \left[ d_0P_{k+1}^{1,0}(t)+d_1P_{k}^{1,0}(t)+d_2P_{k-1}^{1,0}(t)+d_3P_{k-2}^{1,0}(t)\right] \in \Lambda_{n,k}, 
\end{equation}
where $d_i$, $i=0,1,2,3$, are chosen so that the required inclusion holds.
Since $\Lambda_{n,k} \perp \mbox{span} (P_{2k}^{(n)}, P_{2k+1}^{(n)})$, we have
\begin{equation}
\label{fi=0_1}
\langle L, P_{2k}^{(n)} \rangle = \langle L, P_{2k+1}^{(n)} \rangle =0.
\end{equation}
We use the two equations from \eqref{fi=0_1} to express the coefficients $d_2$ and $d_3$ as functions of $d_0$ and $d_1$; i.e., we
obtain $d_2=d_2(d_0,d_1)$ and $d_3=d_3(d_0,d_1)$. Since this procedure is applicable to all polynomials $L(t)$
in \eqref{Poly}, we can substitute $(d_0,d_1)=(1,0)$ and $(0,1)$ to find
\begin{equation}
\label{d23}
d_2^{(1)}:=d_2(1,0), \ d_3^{(1)}:=d_3(1,0), \ d_2^{(2)}:=d_2(0,1), \ d_3^{(2)}:=d_3(0,1).
\end{equation}
Note that \eqref{d23} gives $d_i^{(j)}$ as functions of $c_1$, $c_2$, and $c_3$.

Should the roots of $r_{k+1}(t)$ be all simple and lie in $[-1,1)$, we can apply the quadrature \eqref{quad-2-level} to $L(t)$ with these $d_i^{(j)}$ to obtain
the equations
\begin{equation}
\label{duad-forL-1}
\| P_{k+1}^{1,0} \|_{1,0}^2 + c_2d_2^{(1)}\|P_{k-1}^{1,0} \|_{1,0}^2+c_3d_3^{(1)} \| P_{k-2}^{1,0} \|_{1,0}^2=0,
\end{equation}
and 
\begin{equation}
\label{duad-forL-2}
c_1\| P_k^{1,0} \|_{1,0}^2 + c_2d_2^{(2)}\|P_{k-1}^{1,0} \|_{1,0}^2+c_3d_3^{(2)} \| P_{k-2}^{1,0} \|_{1,0}^2=0.
\end{equation}

Further, we apply the $1/N$-quadrature \eqref{quad-2-level} to the polynomial $r_{k+1}(t)$ to get the linear equation
\begin{equation}
\label{duad-forq}
I_{k+1}+c_1I_k+c_2I_{k-1}+c_3I_{k-2}=\frac{1+c_1+c_2+c_3}{N},
\end{equation}
where
\[ I_j := \int_{-1}^1 P_j^{1,0} (t) d \mu (t) =  \left(\sum_{i=0}^j r_i\right)^{-1} \]
from the Christoffel-Darboux formula.

Thus, we obtain the equations \eqref{duad-forL-1}-\eqref{duad-forq}
for the coefficients $c_1$, $c_2$, and $c_3$. We first express $c_1$ as a
linear function of $c_2$ and $c_3$ from \eqref{duad-forq}.
Then $c_2$ is derived uniquely as a function of $c_3$. The final polynomial equation for $c_3$ has degree $6$. This also allows us to conclude via interval analysis the existence of exact solutions close to numerical ones. 
There are multiple choices for $c_3$ and we check all of them to obtain a polynomial $r_{k+1}(t)$. In general, the roots of $r_{k+1}(t)$ need not be all real, there are some cases when there is a complex conjugate pair (as the lemma below shows, there can be no more than one such pair). If the roots are all real and simple, and belong to $[-1,1)$, then these may serve as $1/N$-quadrature nodes  $\beta_1,\beta_2,\ldots,\beta_{k+1}$. 

\medskip

Given the nodes $\{ \beta_i\}_{i=1}^{k+1}$, the corresponding weights $\theta_1,\theta_2,\ldots,\theta_{k+1}$ can be computed
 using in \eqref{quad-2-level} the Lagrange basis polynomials
\[ \ell_i(t)=(t-1)\prod_{j \neq i} (t-\beta_j)=\frac{(t-1)q_{k+1}(t)}{a_{k+1,k+1}^{1,0}(t-\beta_i)}, \]
as follows
\begin{equation}
\label{duad-forq2}
\theta_i = \frac{1}{\ell_i(\beta_i)} \int_{-1}^1 \ell_i(t) d \mu(t), \ \ i=1,2,\ldots,k+1.
\end{equation}
If the weights are all positive, then the roots of $r_{k+1}(t)$ serve indeed as $1/N$-quadrature nodes. We summarize the discussion in the following theorem.

\begin{theorem}\label{QRthm}
Suppose $c_1$, $c_2$, and $c_3$ are real solutions to the system \eqref{duad-forL-1}-\eqref{duad-forq}, where $d_2^{(i)}, d_3^{(i)}$, $i=1,2$ are found utilizing \eqref{fi=0_1} and \eqref{d23}. Then  $r_{k+1}(t)$ defined in \eqref{rk+1} has at least $k-1$ distinct real roots, of which at least $k-2$ are in the interval $[-1,1)$. If all of its roots are real and simple, belong to $[-1,1)$, and if the associated weights \eqref{duad-forq2} are all positive, then the collection $\{(\beta_i,\theta_i)\}_{i=1}^{k+1}$ forms a $1/N$-quadrature rule for $\Lambda_{n,k}$.
\end{theorem}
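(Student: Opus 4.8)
The statement splits into two independent assertions, and the plan is to treat them in turn. For the root count I would first record that, by its expansion \eqref{rk+1}, $r_{k+1}$ is a linear combination of $P_{k-2}^{1,0},\dots,P_{k+1}^{1,0}$ and is therefore orthogonal, with respect to the positive measure $d\nu^{1,0}$ of \eqref{nuab}, to every polynomial of degree at most $k-3$; this uses only the \emph{form} of $r_{k+1}$, not the solved system. The classical sign-change argument then applies: if $r_{k+1}$ changed sign at only the points $x_1<\dots<x_\ell$ in $(-1,1)$ with $\ell<k-2$, then $r_{k+1}(t)\prod_i(t-x_i)$ would have one sign and not be identically zero, contradicting orthogonality to $\mathcal{P}_{k-3}$. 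Hence there are at least $k-2$ sign changes in $(-1,1)$, giving $k-2$ distinct real roots (of odd multiplicity) in $[-1,1)$. To improve $k-2$ to $k-1$ distinct real roots I would invoke parity: were there exactly $k-2$ distinct real roots, they would all be sign-change roots of odd multiplicity, so their total multiplicity $M$ satisfies $M\equiv k-2\equiv k\pmod 2$; but $\deg r_{k+1}=k+1$ forces the number $k+1-M$ of non-real roots to be even, i.e.\ $M\equiv k+1\pmod 2$, which is impossible. This yields the claimed $k-1$ distinct real roots.

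For the quadrature assertion I would introduce the linear defect functional
\[
E(f):=f_0-\frac{f(1)}{N}-\sum_{i=1}^{k+1}\theta_i f(\beta_i)
\]
and prove $E\equiv0$ on $\Lambda_{n,k}$; since the hypotheses already supply $k+1$ simple nodes in $[-1,1)$ and positive weights, this is exactly what Definition \ref{1/N-quadrature} demands. The first step is exactness on $\mathcal{P}_{k+1}$. The weights \eqref{duad-forq2} are the interpolatory weights at the $k+2$ nodes $\{1,\beta_1,\dots,\beta_{k+1}\}$, so $E$ vanishes on $\mathcal{P}_{k+1}$ precisely when the interpolatory weight at the node $1$ equals $1/N$. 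Because $r_{k+1}/r_{k+1}(1)$ is the Lagrange function for the node $1$ (here $r_{k+1}(1)=1+c_1+c_2+c_3\neq 0$ since all nodes lie in $[-1,1)$), that weight is $r_{k+1}(1)^{-1}\int_{-1}^1 r_{k+1}\,d\mu$, which equals $1/N$ exactly by \eqref{duad-forq}.

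The second step lifts exactness to $\mathcal{P}_{2k-1}$. For $p\in\mathcal{P}_{k-3}$ the polynomial $r_{k+1}(t)(1-t)p(t)$ lies in $\mathcal{P}_{2k-1}$ and vanishes at $1$ and at every $\beta_i$, so $E$ applied to it equals $\int_{-1}^1 r_{k+1}(1-t)p\,d\mu$, a positive multiple of $\langle r_{k+1},p\rangle_{1,0}$, which is $0$ by the orthogonality noted above. Taking $p=1,t,\dots,t^{k-3}$ yields $k-2$ polynomials of degrees $k+2,\dots,2k-1$; together with $\mathcal{P}_{k+1}$ they span a $2k$-dimensional subspace of $\mathcal{P}_{2k-1}$, hence all of it, so $E\equiv0$ on $\mathcal{P}_{2k-1}$. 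The third step handles $P_{2k+2}^{(n)}$ and $P_{2k+3}^{(n)}$ using the polynomials $L$ of \eqref{Poly}: for $(d_0,d_1)=(1,0)$ and $(0,1)$, with $d_2^{(j)},d_3^{(j)}$ from \eqref{d23}, each such $L$ lies in $\Lambda_{n,k}$ and vanishes at $1$ and all $\beta_i$, so $E(L)=\int_{-1}^1 L\,d\mu$ is a positive multiple of $d_0\|P_{k+1}^{1,0}\|_{1,0}^2+c_1d_1\|P_k^{1,0}\|_{1,0}^2+c_2d_2^{(j)}\|P_{k-1}^{1,0}\|_{1,0}^2+c_3d_3^{(j)}\|P_{k-2}^{1,0}\|_{1,0}^2$, which is exactly the left side of \eqref{duad-forL-1} for $j=1$ and of \eqref{duad-forL-2} for $j=2$, and hence vanishes by hypothesis. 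Since these two $L$'s have degrees $2k+3$ and $2k+2$, their projections modulo $\mathcal{P}_{2k-1}$ onto $\mathrm{span}(P_{2k+2}^{(n)},P_{2k+3}^{(n)})$ are linearly independent, so together with $\mathcal{P}_{2k-1}$ they span $\Lambda_{n,k}$ and $E\equiv0$ there, as required.

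The step I expect to require the most care is this final bookkeeping: verifying that the three groups of equations \eqref{duad-forL-1}--\eqref{duad-forq} are consumed exactly by the three deficiencies of the rule—the weight at $1$, the block of degrees $k+2,\dots,2k-1$ reached through $d\nu^{1,0}$-orthogonality, and the two top Gegenbauer directions—and in particular that the two chosen $L$'s genuinely carry nonzero $P_{2k+3}^{(n)}$- and $P_{2k+2}^{(n)}$-components, so that the spanning claim is valid. The remaining ingredients—the sign-change lemma, the parity count, and the identity that $r_{k+1}/r_{k+1}(1)$ is the Lagrange function at $1$—are standard or immediate.
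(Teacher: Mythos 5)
Your proof is correct, and both halves rest on the same three ingredients as the paper's: the sign-change/orthogonality argument for the root count, exactness on $\mathcal{P}_{k+1}$ via the interpolatory weights together with \eqref{duad-forq}, the $d\nu^{1,0}$-orthogonality of $r_{k+1}$ to $\mathcal{P}_{k-3}$ for the middle block of degrees, and the equations \eqref{duad-forL-1}--\eqref{duad-forL-2} for the two top Gegenbauer directions. The organization of the second half, however, is genuinely different. The paper argues from the function side: it takes an arbitrary $f\in\Lambda_{n,k}$, writes $f=(1-t)r_{k+1}u_{k+1}+v_{k+1}$, expands $u_{k+1}$ in the adapted basis $\{y_1,y_2,P_{k-1}^{1,0},P_{k-2}^{1,0}\}\cup\mathcal{P}_{k-3}$, and must then prove the coefficients $C$ and $D$ of $P_{k-1}^{1,0}$ and $P_{k-2}^{1,0}$ vanish (via the orthogonality of $f$ to $P_{2k}^{(n)}$ and $P_{2k+1}^{(n)}$) before it can invoke \eqref{duad-forL-1}--\eqref{duad-forL-2}. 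You argue from the functional side: you verify that the defect functional $E$ annihilates an explicit spanning set of $\Lambda_{n,k}$, namely $\mathcal{P}_{k+1}$, the polynomials $r_{k+1}(t)(1-t)t^j$ for $j=0,\dots,k-3$, and the two polynomials $L$ of \eqref{Poly}. This dual bookkeeping buys you a cleaner dimension count ($(k+2)+(k-2)+2=2k+2=\dim\Lambda_{n,k}$) and entirely sidesteps the $C=D=0$ step; the price is that you must check the two $L$'s have independent projections onto $\mathrm{span}(P_{2k+2}^{(n)},P_{2k+3}^{(n)})$, which your degree argument ($2k+3$ versus $2k+2$) handles. A further small improvement: for the claim of $k-1$ distinct real roots, the paper only remarks that there is at most one complex-conjugate pair and asserts the conclusion, whereas your parity argument ($M\equiv k-2$ from odd multiplicities versus $M\equiv k+1$ from conjugate pairing) makes the missing step explicit and covers the no-complex-pair case, where the paper's stated reasoning is genuinely incomplete without it.
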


\begin{proof}
We first show that $r_{k+1}(t)$ has at least $k-2$ sign changes in $[-1,1)$. Indeed, if it has less, then there is a polynomial $p_{k-3}$ of degree at most $k-3$ that has the same sign changes. But then $r_{k+1}(t)p_{k-3}(t)(1-t)$ does not change sign in $[-1,1]$, and hence
\[\int_{-1}^1 r_{k+1}(t)p_{k-3}(t) (1-t)\, d\mu(t) \not=0. \]
This is a contradiction since $r_{k+1}(t)$ is orthogonal to $p_{k-3} (t)$ with respect to $ d\nu^{1,0}(t)$ (see \eqref{nuab} and \eqref{rk+1}). Hence, $r_{k+1}(t)$ has at most one pair of complex conjugate roots. As the total number of roots, counting multiplicity, is exactly $k+1$, and as we already have at least $k-2$ sign changes in $[-1,1]$, we conclude that $r_{k+1}$ has at least $k-1$ distinct real roots. 

Suppose now that all roots of $r_{k+1}(t)$ are real and simple and lie in $[-1,1)$, and that the weights \eqref{duad-forq2} are all positive. Let $f(t)\in \Lambda_{n,k}$ be arbitrary. We want to show that the $1/N$-quadrature \eqref{quad-2-level} holds. We first observe, that by the choice of the weights $\theta_i$ from \eqref{duad-forq2} and the condition \eqref{duad-forq} the $1/N$-quadrature rule \eqref{quad-2-level} holds for the Lagrange basis polynomials of degree $k+1$ associated with the nodes $\{\beta_1,\dots, \beta_{k+1},1\}$, and hence it holds for all polynomials in $\mathcal{P}_{k+1}$. We expand $f(t)$
\begin{equation}\label{f_exp}
f(t)=(1-t)r_{k+1}(t)u_{k+1}(t)+v_{k+1}(t) 
\end{equation}
for some $u_{k+1},v_{k+1}\in \mathcal{P}_{k+1}$. Consider the two polynomials
\[ y_1(t):=P_{k+1}^{1,0}(t)+d_2^{(1)}P_{k-1}^{1,0}(t)+d_3^{(1)}P_{k-2}^{1,0}(t), \quad y_2(t):=P_{k}^{1,0}(t)+d_2^{(2)}P_{k-1}^{1,0}(t)+d_3^{(2)}P_{k-2}^{1,0}(t).\]
Observe that by the choice of $d_i^{(j)}$ and \eqref{fi=0_1} we have 
\begin{equation}\label{yortho}
\langle (1-t)r_{k+1}(t)y_j(t),P_{2k}(t)\rangle=0, \quad \langle (1-t)r_{k+1}(t)y_j(t),P_{2k+1}(t)\rangle=0,\quad j=1,2,
\end{equation} 
or  $(1-t)r_{k+1}(t)y_j(t)  \in \Lambda_{n,k}$ for $j=1,2$. We can express $u_{k+1}(t)$ as 
%\begin{equation}\label{uk+1} 
\[ u_{k+1}(t)=Ay_1(t)+By_2(t)+CP_{k-1}^{1,0}(t)+DP_{k-2}^{1,0}(t)+w_{k-3}(t), \]
%\end{equation}
for some $w_{k-3}\in \mathcal{P}_{k-3}$. From  $\langle f,P_{2k+1}\rangle=0$ and \eqref{yortho} we have that 
\[0=\langle r_{k+1}(t) (1-t)(CP_{k-1}^{1,0}(t)+DP_{k-2}^{1,0}(t)+w_{k-3}(t)),P_{2k+1}(t)\rangle=C\int_{-1}^1 P_{k+1}^{1,0}(t) P_{k-1}^{1,0}(t) P_{2k+1}(t) (1-t) \, d\mu(t).\]
Since $P_{k+1}^{1,0}(t) P_{k-1}^{1,0}(t) (1-t)$ is of exact degree $2k+1$ and has nonzero leading coefficient, the integral is non-zero and we conclude that $C=0$.
Similarly $\langle f,P_{2k}\rangle =0$ and \eqref{yortho} imply that $D=0$. Equations 
\eqref{duad-forL-1} and \eqref{duad-forL-2} now yield that
\[\int_{-1}^1 (1-t)r_{k+1}(t)u_{k+1}(t) \, d\mu(t)=0.\]
Utilizing this equation, the fact that the first term in the sum on the right-hand side of \eqref{f_exp} annihilates the quadrature sum, and that the quadrature holds for $v_{k+1}$, we conclude \eqref{quad-2-level}, which completes the proof.
\end{proof}

\subsection{Existence of Hermite interpolant to $h(t)$ in the skip-two/add-two subspace $\Lambda_{n,k}$.}

We shall use Lemma \ref{lem1} and Theorem \ref{implications-by-pps} to determine sufficient conditions for the existence of a $\Lambda_{n,k}$-LP-extremal polynomial
\begin{equation}
\label{PP-extr}
f^{h}(t)=f_{2k+3}^hP_{2k+3}^{(n)}(t)+f_{2k+2}^hP_{2k+2}^{(n)}(t)+\sum_{i=0}^{2k-1} f_i^hP_i^{(n)}(t) \in \Lambda_{n,k} \cap A_{n,h}
\end{equation}
that interpolates the potential function $h(t)$ at the nodes $\{\beta_i\}_{i=1}^{k+1}$.

\begin{lemma}\label{ExistIP}
Suppose $h$ is an absolutely monotone function on $[-1,1)$ and $T$ is a multiset on $(-1,1)$
%\begin{equation}
%\label{int-set-T}
\[ T=\{\beta_1,\beta_1,\beta_2,\beta_2,\ldots,\beta_{k+1},\beta_{k+1}\} \]
%\end{equation}
with $q_{k+1}(t)=(t-\beta_1)\dots (t-\beta_{k+1})$ (see \eqref{ParProd}). If
\begin{equation}
\label{existence-cond}
\langle tq_{k+1}^2(t),P_{2k}^{(n)}(t)\rangle \cdot \langle q_{k+1}^2(t),P_{2k+1}^{(n)}(t)\rangle \neq
\langle tq_{k+1}^2(t),P_{2k+1}^{(n)}(t)\rangle \cdot \langle q_{k+1}^2,P_{2k}^{(n)}(t)\rangle ,
\end{equation}
then there exists a unique polynomial $f^h \in \Lambda_{n,k}$ denoted 
\begin{equation}
\label{def-fhd}
f^{h}(t):=H_{\Lambda_{n,k}}(h;q_{k+1}^2)
\end{equation}
that interpolates $h(t)$ at the nodes of the multiset $T$.
\end{lemma}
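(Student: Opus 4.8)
The plan is to recognize this as a \emph{square} Hermite interpolation problem and to reduce existence and uniqueness to the nonvanishing of a single $2\times 2$ determinant. First I would count conditions against dimensions: the multiset $T$ prescribes the value and the first derivative at each of the $k+1$ nodes $\beta_i$, so it imposes $2(k+1)=2k+2$ linear conditions, while $\dim \Lambda_{n,k}=\dim\mathcal{P}_{2k-1}+2=2k+2$. Hence the evaluation map
\[
f\longmapsto \bigl(f(\beta_1),f'(\beta_1),\ldots,f(\beta_{k+1}),f'(\beta_{k+1})\bigr)
\]
is a linear map between spaces of equal dimension, and it suffices to prove it is injective. Injectivity then delivers \emph{both} existence and uniqueness of an interpolant $f^h=H_{\Lambda_{n,k}}(h;q_{k+1}^2)$ for the data of any sufficiently smooth $h$; here only smoothness of $h$ is used (absolute monotonicity, which of course guarantees it, plays no further role in this particular statement).

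Next I would identify the kernel of this evaluation map. If $f\in\Lambda_{n,k}$ vanishes to order two at every $\beta_i$, then $q_{k+1}^2\mid f$. Since $\deg q_{k+1}^2=2k+2$ while $\deg f\le 2k+3$, the only possibility is $f=(a+bt)\,q_{k+1}^2$ for constants $a,b$. Thus the kernel consists precisely of those products $(a+bt)\,q_{k+1}^2$ that happen to lie in $\Lambda_{n,k}$. I would then use the characterization $\Lambda_{n,k}=\{f\in\mathcal{P}_{2k+3}:\langle f,P_{2k}^{(n)}\rangle=\langle f,P_{2k+1}^{(n)}\rangle=0\}$, i.e.\ that membership means the two skipped Gegenbauer coefficients vanish. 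Imposing this on $(a+bt)\,q_{k+1}^2$ yields the homogeneous system
\[
a\,\langle q_{k+1}^2,P_{2k}^{(n)}\rangle+b\,\langle t q_{k+1}^2,P_{2k}^{(n)}\rangle=0,\qquad
a\,\langle q_{k+1}^2,P_{2k+1}^{(n)}\rangle+b\,\langle t q_{k+1}^2,P_{2k+1}^{(n)}\rangle=0.
\]
Its determinant is $\langle q_{k+1}^2,P_{2k}^{(n)}\rangle\langle t q_{k+1}^2,P_{2k+1}^{(n)}\rangle-\langle t q_{k+1}^2,P_{2k}^{(n)}\rangle\langle q_{k+1}^2,P_{2k+1}^{(n)}\rangle$, which is nonzero exactly by the hypothesis \eqref{existence-cond}. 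Consequently $a=b=0$, the kernel is trivial, the evaluation map is an isomorphism, and $f^h$ exists and is unique.

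The argument is purely linear-algebraic, so there is no serious analytic obstacle. The one point requiring genuine care—and the step I expect to be the crux—is the bookkeeping that makes the injectivity argument valid: confirming $\dim\Lambda_{n,k}=|T|=2k+2$ so that the square-system logic applies, and then matching the determinant of the membership system term-by-term to the product form written in \eqref{existence-cond}. Once those two identifications are verified, the proof closes immediately.
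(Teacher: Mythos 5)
Your proof is correct, and for the existence half it takes a genuinely different route from the paper. The uniqueness step is the same in both: the difference of two interpolants in $\Lambda_{n,k}$ is divisible by $q_{k+1}^2$, hence of the form $(A+Bt)q_{k+1}^2$, and the two orthogonality conditions defining membership in $\Lambda_{n,k}$ force $A=B=0$ precisely when the determinant in \eqref{existence-cond} is nonzero. For existence, however, the paper does not use your dimension count. It instead constructs the interpolant explicitly via the Newton form of Lemma \ref{lem1}: since the partial products $g_j$ for $j\le 2k-1$ already lie in $\mathcal{P}_{2k-1}\subset\Lambda_{n,k}$, only the interpolants to $g_{2k}=q_k^2$ and $g_{2k+1}=q_kq_{k+1}$ need to be produced, and these are sought in the form $g_j+q_{k+1}^2(A_it+B_i)$, leading to inhomogeneous $2\times 2$ systems with the same coefficient matrix, solvable under \eqref{existence-cond}. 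Your argument --- kernel trivial, hence the evaluation map from the $(2k+2)$-dimensional space $\Lambda_{n,k}$ onto the $2k+2$ Hermite data is an isomorphism --- is shorter and cleaner as a pure existence-and-uniqueness statement, and your identification of the membership determinant with \eqref{existence-cond} is exactly right. What the paper's constructive route buys, and what your approach forgoes, is the explicit pairs $(A_i,B_i)$ of \eqref{ab-solution1}: these are reused immediately in Corollary \ref{Ineq} (to verify $f^h\le h$ via $\max\{A_i+B_i,\,B_i-A_i\}\le 0$) and in the positive-definiteness analysis leading to Theorem \ref{fhinanh}. So your proof fully establishes the lemma as stated, but if it replaced the paper's proof wholesale, the later sections would need to reintroduce the $(A_i,B_i)$ separately.
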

\begin{proof}
We first prove the uniqueness. If $F_1^h(t)$ and $ F_2^h (t)$ are two such interpolants, then the nodes of the multiset are zeros of the difference, and therefore
\begin{equation}\label{MultisetIntEq}
F_1^h (t)-F_2^h (t)=q_{k+1}^2(t)(At+B).
\end{equation}
Since each $F_i^h (t)$, $i=1,2$ is orthogonal to $P_{2k}^{(n)}(t)$ and $P_{2k+1}^{(n)}(t)$, the constants $A$ and $B$ satisfy the linear system
\begin{eqnarray}
\begin{split}
\label{LinSys}
A \langle tq_{k+1}^2(t),P_{2k}^{(n)}(t)\rangle + B \langle q_{k+1}^2(t),P_{2k}^{(n)}(t)\rangle
&=&0, \\
A \langle tq_{k+1}^2(t),P_{2k+1}^{(n)}(t)\rangle + B \langle q_{k+1}^2(t),P_{2k+1}^{(n)}(t)\rangle
&=&0.
\end{split}
\end{eqnarray}
The condition \eqref{existence-cond} now yields that \eqref{LinSys} has only the trivial solution, which implies the uniqueness.
\smallskip

To prove the existence, as $H_{\Lambda_{n,k} }(g_j;g_{2k+2})=g_j(t)$ for all $j\leq 2k-1$, by the definition \eqref{def-L-odd} of the subspace $\Lambda_{n,k}$, it is enough to establish the existence of
the interpolants $H_{\Lambda_{n,k} }(g_j;g_{2k+2})$ for $j=2k$ and $2k+1$ and apply Lemma \ref{lem1} (note that $g_{2k}(t)=q_k^2(t)$, $g_{2k+1}(t)=q_k(t)q_{k+1}(t)$,
and $g_{2k+2}(t)=q_{k+1}^2(t)$). Similar to \eqref{MultisetIntEq} we find
\begin{eqnarray}
\label{ab-solution1}
\begin{split}
H_{\Lambda_{n,k} }(q_k^2;q_{k+1}^2)-q_{k}^2(t) &=& q_{k+1}^2(t)(A_1t+B_1), \\
H_{\Lambda_{n,k} }(q_kq_{k+1};q_{k+1}^2)-q_k(t)q_{k+1}(t) &=& q_{k+1}^2(t)(A_2t+B_2),
\end{split}
\end{eqnarray}
where the parameters $A_i,B_i$ can be determined by the orthogonality conditions \eqref{fi=0_1}. As in \eqref{LinSys} we get that $A_1$ and $B_1$ satisfy the linear system 
\begin{eqnarray*}
\label{ab-solution2}
A_1 \langle tq_{k+1}^2(t),P_{2k}^{(n)}(t)\rangle + B_1 \langle q_{k+1}^2(t),P_{2k}^{(n)}(t)\rangle
&=& - \langle q_{k}^2(t),P_{2k}^{(n)}(t)\rangle, \\
A_1 \langle tq_{k+1}^2(t),P_{2k+1}^{(n)}(t)\rangle + B_1 \langle q_{k+1}^2(t),P_{2k+1}^{(n)}(t)\rangle
&=& - \langle q_{k}^2(t),P_{2k+1}^{(n)}(t)\rangle.
\end{eqnarray*}
The constants $A_2$ and $B_2$
satisfy similar system, the only difference being the right-hand side. By \eqref{existence-cond} this system has non-zero determinant, which implies the existence (and uniqueness) of the constants $A_i, B_i$, $i=1,2$.
\end{proof}

The proof, and in particular, the equation \eqref{ab-solution1} imply the following corollary.

\begin{corollary} \label{Ineq} In the context of Lemma \ref{ExistIP}, suppose that 
\begin{equation}
\label{ineq-check-a1b1}
\max\{A_i+B_i,B_i-A_i\} \leq 0, \quad i=1,2,
\end{equation}
where $A_i, B_i$ are defined in \eqref{ab-solution1}. Then 
\[ H_{\Lambda_{n,k}}(q_k^2;q_{k+1}^2) \leq q_k^2(t), \ \ H_{\Lambda_{n,k}}(q_kq_{k+1};q_{k+1}^2) \leq q_k(t)q_{k+1}(t). \]
Subsequently, if $h(t)$ is absolutely monotone potential, then $f^h(t)=H_{\Lambda_{n,k}} (h;q_{k+1}^2) \leq h(t)$.
\end{corollary}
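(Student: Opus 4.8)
The plan is to exploit the additive structure provided by Lemma~\ref{lem1} in the same way Theorem~\ref{implications-by-pps} does, reducing the bound $f^h \le h$ to bounds on the two ``genuinely new'' partial-product interpolants, namely those attached to the multiset nodes $g_{2k}=q_k^2$ and $g_{2k+1}=q_kq_{k+1}$. First I would record that Lemma~\ref{lem1} expresses $f^h = H_{\Lambda_{n,k}}(h;q_{k+1}^2)$ as the nonnegative divided-difference combination $\sum_{j=0}^{2k+1} h[\cdot]\,H_{\Lambda_{n,k}}(g_j;q_{k+1}^2)$, and that since $\Lambda_{n,k}\supset \mathcal{P}_{2k-1}$ the interpolants $H_{\Lambda_{n,k}}(g_j;q_{k+1}^2)$ equal $g_j$ itself for every $j\le 2k-1$. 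Hence the only indices at which the interpolant can differ from $g_j$ are $j=2k$ and $j=2k+1$, which are exactly the two cases treated in \eqref{ab-solution1}.

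The heart of the argument is the sign analysis of those two differences. From \eqref{ab-solution1} we have $H_{\Lambda_{n,k}}(q_k^2;q_{k+1}^2)-q_k^2(t)=q_{k+1}^2(t)(A_1 t+B_1)$ and the analogous identity with $(A_2,B_2)$. Since $q_{k+1}^2(t)\ge 0$ on $[-1,1]$, the sign of each difference is governed entirely by the linear factor $A_i t+B_i$. On $t\in[-1,1)$ a linear function $A_i t + B_i$ is nonpositive precisely when it is nonpositive at both endpoints, i.e.\ when $-A_i+B_i\le 0$ and $A_i+B_i\le 0$; this is exactly the hypothesis \eqref{ineq-check-a1b1}, written as $\max\{A_i+B_i,\,B_i-A_i\}\le 0$. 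Thus condition \eqref{ineq-check-a1b1} forces $A_i t+B_i\le 0$ on $[-1,1]$, and multiplying by the nonnegative square $q_{k+1}^2(t)$ gives the two displayed inequalities $H_{\Lambda_{n,k}}(q_k^2;q_{k+1}^2)\le q_k^2(t)$ and $H_{\Lambda_{n,k}}(q_kq_{k+1};q_{k+1}^2)\le q_k(t)q_{k+1}(t)$ for all $t\in[-1,1)$.

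To conclude the final assertion $f^h\le h$, I would transplant the telescoping estimate from the proof of Theorem~\ref{implications-by-pps}. Writing $J_{2k+1}=H(h;q_{k+1}^2)$ for the classical Hermite interpolant in $\mathcal{P}_{2k+1}$, one has $h(t)\ge J_{2k+1}(t)$ on $[-1,1)$ by the Rolle/remainder argument already invoked there (absolute monotonicity of $h$ makes the relevant derivative nonnegative). It then suffices to show $J_{2k+1}(t)-f^h(t)\ge 0$. Expanding both in the Newton basis and using linearity of interpolation, $J_{2k+1}-f^h=\sum_{j=0}^{2k+1} h[\beta_1,\ldots,\beta_j]\bigl(g_j(t)-H_{\Lambda_{n,k}}(g_j;q_{k+1}^2)\bigr)$, where every divided difference $h[\beta_1,\ldots,\beta_j]$ is nonnegative because $h$ is absolutely monotone (as in Theorem~\ref{implications-by-pps}, via \cite[Cor.~3.4.2]{D}). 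For $j\le 2k-1$ the bracket vanishes, while for $j\in\{2k,2k+1\}$ the bracket equals $g_j(t)-H_{\Lambda_{n,k}}(g_j;q_{k+1}^2)\ge 0$ by the two inequalities just established. Hence the whole sum is nonnegative, giving $f^h\le J_{2k+1}\le h$ on $[-1,1)$.

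The only delicate point — and the step I would flag as the main obstacle — is the clean reduction of the endpoint test to \eqref{ineq-check-a1b1}: one must be careful that the linear factor's sign is controlled on the \emph{closed} interval $[-1,1]$ even though interpolation and the potential inequality are only asserted on $[-1,1)$, and that the index bookkeeping (which partial products survive, and that $g_{2k}=q_k^2$, $g_{2k+1}=q_kq_{k+1}$, $g_{2k+2}=q_{k+1}^2$ as noted in the proof of Lemma~\ref{ExistIP}) is matched correctly to \eqref{ab-solution1}. Everything else is a direct reuse of Lemma~\ref{lem1} and the telescoping inequality of Theorem~\ref{implications-by-pps}; no new transcendental input is required beyond the nonnegativity of divided differences and of $q_{k+1}^2$.
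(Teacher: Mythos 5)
Your proposal is correct and follows essentially the same route as the paper: the endpoint test $\max\{A_i+B_i,\,B_i-A_i\}\le 0$ is exactly the condition for the linear factors $A_it+B_i$ in \eqref{ab-solution1} to be nonpositive on $[-1,1]$, which after multiplication by $q_{k+1}^2\ge 0$ gives the two displayed inequalities, and the passage to general absolutely monotone $h$ is the divided-difference/telescoping argument of Lemma \ref{lem1} and Theorem \ref{implications-by-pps} that the paper invokes via \eqref{ex-h-partial}. The point you flag as delicate is harmless: by continuity, nonpositivity of a linear function on $[-1,1)$ and on $[-1,1]$ are equivalent, and your index bookkeeping ($g_{2k}=q_k^2$, $g_{2k+1}=q_kq_{k+1}$, with $H_{\Lambda_{n,k}}(g_j;q_{k+1}^2)=g_j$ for $j\le 2k-1$) matches the paper's.
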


\begin{proof}
Since $A_i t+B_i$ in \eqref{ab-solution1} are linear functions, both will be non-positive on $[-1,1]$ if and only if  \eqref{ineq-check-a1b1} holds. The conclusion $H_{\Lambda_{n,k}} (h;q_{k+1}^2) \leq h(t)$ for absolutely monotone potentials now follows from \eqref{ex-h-partial}.
\end{proof}

%
%Assuming the existence of the Hermite interpolant $f^h (t)=H_{\Lambda_{n,k}}(h;q_{k+1}^2)$ we seek conditions that guarantee $f^h \in A_{n,h}$. To prove that
%$f^h \leq h$ in $[-1,1]$ it is enough to verify the inequalities
%
%It follows from \eqref{ab-solution1} that it is enough to have
%
%\subsection{Positive definiteness of the Hermite interpolant to $h(t)$ in the skip-two/add-two subspace $\Lambda_{n,k}$.}
%
%

To apply Theorem \ref{implications-by-pps} for $f^h$ defined in \eqref{def-fhd} we need to show the positive definiteness of the interpolants
$H_{\Lambda_{n,k}}(g_{j-1};g_{2k+2})$ for $j=1,\ldots,2k+2$. For $j \in \{2k+1,2k+2\}$, 
the special polynomial $H_{\Lambda_{n,k}}(q_k^2;q_{k+1}^2)$ and $H_{\Lambda_{n,k}}(q_kq_{k+1};q_{k+1}^2)$
can be written using \eqref{ab-solution1} as
\[
H_{\Lambda_{n,k}}(q_k^2;q_{k+1}^2)=q_{k}^2(t)+q_{k+1}^2(t)(A_1t+B_1)=q_k^2(t)(1+(t-\beta_{k+1})^2(A_1t+B_1)), \]
\[H_{\Lambda_{n,k}}(q_kq_{k+1};q_{k+1}^2)=q_k(t)q_{k+1}(t)+q_{k+1}^2(t)(A_2t+B_2)=q_k(t)q_{k+1}(t)(1+(t-\beta_{k+1})(A_2t+B_2)).
\]
Then we can verify the positive definiteness of $H_{\Lambda_{n,k}}(q_k^2;q_{k+1}^2)$ and $H_{\Lambda_{n,k}}(q_kq_{k+1};q_{k+1}^2)$
directly  (in particular, $A_1>0$ and $A_2>0$ are necessary conditions) which along with Corollary \ref{Ineq} gives
%\begin{equation}
%\label{HighestPosDef}
\[ H_{\Lambda_{n,k}}(q_k^2;q_{k+1}^2) \in A_{n,q_k^2}, \quad H_{\Lambda_{n,k}}(q_k q_{k+1};q_{k+1}^2) \in A_{n,q_k q_{k+1}}. \]
%\end{equation}

For $j=1,\dots,2k$ we have that $H_{\Lambda_{n,k}}(g_{j-1};g_{2k+2})=g_{j-1} (t)$. Therefore, it is enough to focus on deriving sufficient conditions that guarantee the $(1,0)$-positive definiteness of the partial products $q_i$, $i \leq k$;
i.e., to have nonnegative coefficients in their expansion in terms of the adjacent polynomials $P_i^{1,0}(t)$. Recall that $(1,0)$-positive definiteness implies positive definiteness. Since $g_{2j}(t)=q_j^2(t), g_{2j+1}(t)=q_j (t) q_{j+1}(t)$, $j=0,1,\dots, k-1$ (see \eqref{partial-1} and \eqref{ParProd}), the Krein condition \eqref{krein} will imply that the hypothesis conditions in Theorem \ref{implications-by-pps} hold. 

\medskip

The $(1,0)$-positive definiteness of $q_{k+1}(t)$ is equivalent to the non-negativity of the constants $c_1, c_2, c_3$ in Theorem \ref{QRthm}. The next two lemmas provide sufficient conditions for the $(1,0)$-positive definiteness of $q_k(t)$ and $q_{k-1}(t)$.

\begin{lemma}
\label{thmj=0} In the context of Lemma \ref{ExistIP}, let $d_0:=\theta_{k+1}(1-\beta_{k+1})q_k(\beta_{k+1})$ (note that $d_0>0$). If 
\begin{equation}
\label{c3cond2}
c_3>-\frac{d_0 a_{k+1,k+1}^{1,0} a_{k-2,k-2}^{1,0}r_{k-2}^{1,0}P_{k-1}^{1,0}(\beta_{k+1})}{a_{k-1,k-1}^{1,0}},
\end{equation}
then the polynomial $q_k(t)$ is $(1,0)$-positive. In particular, since the right-hand side of \eqref{c3cond2} is negative, non-negativity of $c_3$ implies
that \eqref{c3cond2} is satisfied.
\end{lemma}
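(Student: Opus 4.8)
The plan is to expand $q_k(t)$ in the adjacent basis $\{P_i^{1,0}\}$ and show that under \eqref{c3cond2} every coefficient is nonnegative. The natural starting point is the relation between $q_k$ and $q_{k+1}$. First I would use the quadrature weight formula \eqref{duad-forq2} to obtain a clean expression for the top coefficient of $q_k$; specifically, since $q_{k+1}(t)=(t-\beta_{k+1})q_k(t)$ and the Lagrange weight $\theta_{k+1}$ is given by \eqref{duad-forq2}, the quantity $d_0=\theta_{k+1}(1-\beta_{k+1})q_k(\beta_{k+1})$ arises precisely as $\int_{-1}^1 q_k(t)q_{k+1}(t)(1-t)\,d\mu(t)$ up to the normalization by $a_{k+1,k+1}^{1,0}$. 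This identifies $d_0$ with an inner product $\langle q_k q_{k+1},1\rangle_{1,0}$-type integral and explains why $d_0>0$ (the integrand has constant sign on $[-1,1)$ after the $(1-t)$ factor is absorbed, since $q_k q_{k+1}(1-t)$ does not change sign there given the interlacing \eqref{interlacing}).

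Next I would compute the lowest-order adjacent coefficient of $q_k$, namely $q_{k,k-2}^{1,0}:=r_{k-2}^{1,0}\langle q_k,P_{k-2}^{1,0}\rangle_{1,0}$, and relate it to $c_3$. The key observation is that $q_k$ is orthogonal (with respect to $d\nu^{1,0}$) to all polynomials of degree at most $k-3$: this follows because $q_k(t)=q_{k+1}(t)/(t-\beta_{k+1})$ and $q_{k+1}=\tfrac{1}{a_{k+1,k+1}^{1,0}}r_{k+1}$ is orthogonal to $\mathcal P_{k-3}$ by the construction in \eqref{rk+1}, while division by the linear factor only raises the orthogonality floor by one degree. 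Hence $q_k$ has an adjacent expansion supported on degrees $k-2,k-1,k$, so only three coefficients must be controlled, and the leading coefficient is automatically positive. The subdominant coefficients can be extracted by pairing $q_k$ against $P_{k-1}^{1,0}$ and $P_{k-2}^{1,0}$; using $q_{k+1}=(t-\beta_{k+1})q_k$ and the three-term recurrence for the adjacent polynomials, these pairings reduce to the already-known coefficients $c_1,c_2,c_3$ of $r_{k+1}$ together with the evaluations $P_{k-1}^{1,0}(\beta_{k+1})$ appearing in \eqref{c3cond2}. Carrying this out yields the $k-2$ coefficient as an affine function of $c_3$ with positive slope, and the threshold in \eqref{c3cond2} is exactly the value of $c_3$ at which that coefficient vanishes.

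I would then verify the remaining (the $k-1$) adjacent coefficient is nonnegative; this one I expect to be nonnegative automatically (or under the same hypothesis), since the interlacing \eqref{interlacing} forces the zeros of $q_k$ to lie inside $(-1,1)$ in the configuration that makes $q_k$ a positive combination, and the dominant and subdominant terms are tied together through the recurrence. Assembling the three nonnegative coefficients gives the $(1,0)$-positive definiteness. The final sentence of the statement is then immediate: the right-hand side of \eqref{c3cond2} is a product of manifestly positive quantities (the positive normalization constants $a_{k+1,k+1}^{1,0},a_{k-2,k-2}^{1,0},r_{k-2}^{1,0}$, the positive $d_0$, and $P_{k-1}^{1,0}(\beta_{k+1})>0$ since $\beta_{k+1}$ exceeds the largest zero $t_{k-1}^{1,0}$ of $P_{k-1}^{1,0}$ by the interlacing $\alpha_k<\beta_{k+1}$) carrying an overall minus sign, so it is negative and any $c_3\ge 0$ satisfies the inequality.

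\textbf{Main obstacle.} The delicate step is expressing the lowest adjacent coefficient of $q_k$ cleanly in terms of $c_3$ and identifying the constant $d_0$ with the correct inner product so that the threshold matches \eqref{c3cond2} exactly; this requires careful bookkeeping with the three-term recurrence and the leading coefficients $a_{i,i}^{1,0}$, and confirming the sign $P_{k-1}^{1,0}(\beta_{k+1})>0$ via the interlacing. The nonnegativity of the middle ($k-1$) coefficient is the secondary point to nail down, since the hypothesis only explicitly bounds $c_3$.
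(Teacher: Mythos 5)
There is a genuine gap, and it is structural. Your plan hinges on the claim that $q_k=q_{k+1}(t)/(t-\beta_{k+1})$ inherits orthogonality to $\mathcal{P}_{k-3}$ with respect to $d\nu^{1,0}$ from $r_{k+1}$, ``division by the linear factor only raising the orthogonality floor by one degree,'' so that the adjacent expansion of $q_k$ is supported on degrees $k-2,k-1,k$ and only three coefficients need checking. This is false: division by a linear factor does not preserve orthogonality. Writing $p(t)=p(\beta_{k+1})+(t-\beta_{k+1})\tilde p(t)$ for $\deg p\le k-2$ gives $\langle q_k,p\rangle_{1,0}=p(\beta_{k+1})\,\langle q_k,1\rangle_{1,0}$, so the adjacent coefficients $d_\ell$ of $q_k$ for $\ell\le k-2$ are proportional to $d_0P_\ell^{1,0}(\beta_{k+1})$ and are strictly \emph{positive}, not zero ($q_k$ behaves like a kernel polynomial evaluated at $\beta_{k+1}$, which lies to the right of all zeros). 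The paper establishes $d_\ell>0$ for $\ell\le k-2$ by applying the quadrature \eqref{quad-2-level} to $q_k(t)P_\ell^{1,0}(t)(1-t)\in\mathcal{P}_{2k-1}\subset\Lambda_{n,k}$, where every node except $\beta_{k+1}$ and the node at $1$ is annihilated; your proposal omits these $k-1$ coefficients entirely, so it does not establish $(1,0)$-positive definiteness.

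A second, related error: you attach the hypothesis \eqref{c3cond2} to the coefficient of $P_{k-2}^{1,0}$ and expect the $P_{k-1}^{1,0}$ coefficient to come for free. It is the other way around. The coefficient $d_{k-2}$ is positive unconditionally, since $\deg\bigl(q_kP_{k-2}^{1,0}(1-t)\bigr)=2k-1$ and the quadrature argument applies; it is $d_{k-1}$ that cannot be handled this way, because $q_kP_{k-1}^{1,0}(1-t)$ has degree $2k$ and $P_{2k}^{(n)}$ is skipped in $\Lambda_{n,k}$. The paper treats $d_{k-1}$ via the auxiliary integral $I=\int_{-1}^1 q_{k+1}(t)\bigl(P_{k-1}^{1,0}(t)-P_{k-1}^{1,0}(\beta_{k+1})\bigr)(t-\beta_{k+1})^{-1}\,d\nu^{1,0}(t)$, which equals a positive multiple of $c_3$ by the orthogonality of $r_{k+1}$, yielding $d_{k-1}\|P_{k-1}^{1,0}\|_{1,0}^2=I+d_0P_{k-1}^{1,0}(\beta_{k+1})$; condition \eqref{c3cond2} is precisely what makes this positive. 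A minor further slip: your justification of $d_0>0$ via sign-constancy of $q_kq_{k+1}(1-t)$ on $[-1,1)$ fails (that product changes sign at $\beta_{k+1}$); the correct reason is simply that $\theta_{k+1}>0$, $1-\beta_{k+1}>0$ and $q_k(\beta_{k+1})=\prod_{i\le k}(\beta_{k+1}-\beta_i)>0$ by the ordering of the nodes.
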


\begin{proof} Let $q_k(t)= \sum_{i=0}^k d_i P_i^{1,0}(t)$.
First, note that $d_k=1/a_{k,k}^{1,0}>0$. For any $\ell \leq k-2$ the degree of the polynomial $q_k(t)P_\ell^{1,0}(t)(1-t)$ is $k+\ell+1 \leq 2k-1$ and this polynomial
belongs to $\Lambda_{n,k}$. Thus, we apply \eqref{quad-2-level} to get
\[ d_\ell \|P_\ell^{1,0} (t)\|^2 =\int_{-1}^1 q_k(t)P_\ell^{1,0}(t)(1-t) d \mu(t) = \theta_{k+1}q_k(\beta_{k+1})P_\ell^{1,0}(\beta_{k+1})(1-\beta_{k+1}) =d_0 P_\ell^{1,0}(\beta_{k+1}) \]
(all other terms are equal to 0; note $d_0$ as in the condition). Since all factors are positive (observe that $\beta_{k+1}>\alpha_k>t_k^{1,0}>t_\ell^{1,0}$)
we have $d_\ell>0$.

Finally, for the last remaining $d_{k-1}$ we consider (as in \cite{CK})
\[ I:=\int_{-1}^1 \frac{q_{k+1}(t)\left(P_{k-1}^{1,0}(t)-P_{k-1}^{1,0}(\beta_{k+1})\right)}{t-\beta_{k+1}} d\nu^{1,0}(t). \]
Comparing coefficients, we obtain
\[  \frac{P_{k-1}^{1,0}(t)-P_{k-1}^{1,0}(\beta_{k+1})}{t-\beta_{k+1}}=\frac{a_{k-1,k-1}^{1,0}}{a_{k-2,k-2}^{1,0}} P_{k-2}^{1,0}(t)+\cdots, \]
whence $I=c_3a_{k-1,k-1}^{1,0}\|P_{k-2}^{1,0}\|_{1,0}^2/(a_{k+1,k+1}^{1,0} a_{k-2,k-2}^{1,0})$.
On the other hand,
\begin{eqnarray*}
&& I=\int_{-1}^1 q_k(t)\left(P_{k-1}^{1,0}(t)-P_{k-1}^{1,0}(\beta_{k+1})\right) d\nu^{1,0}(t), \\
&\iff& \int_{-1}^1 q_k(t)P_{k-1}^{1,0}(t)d\nu^{1,0}(t) =I+ P_{k-1}^{1,0}(\beta_{k+1}) \int_{-1}^1 q_k(t) d\nu^{1,0}(t) \\
&\iff& d_{k-1} \| P_{k-1}^{1,0}\|_{1,0}^2=I+P_{k-1}^{1,0}(\beta_{k+1})d_0>0
\end{eqnarray*}
and we conclude that $d_{k-1}>0$.
\end{proof}

\begin{lemma}
\label{thmj=1}
In the context of Lemma \ref{ExistIP}, if 
\begin{equation}
\label{pd-qk-1}
P_{j}^{1,0}(\beta_k) > - \frac{\theta_{k+1} q_{k-1}(\beta_{k+1})(1-\beta_{k+1})P_{j}^{1,0}(\beta_{k+1})}
{ \theta_{k} q_{k-1}(\beta_k) (1-\beta_k) }
\end{equation}
for every $j \leq k-2$, then the polynomial $q_{k-1}(t)$ is $(1,0)$-positive definite.
\end{lemma}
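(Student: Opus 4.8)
The plan is to expand $q_{k-1}$ in the adjacent basis, $q_{k-1}(t)=\sum_{i=0}^{k-1} e_i P_i^{1,0}(t)$, and to verify that every coefficient $e_i$ is positive, which is precisely $(1,0)$-positive definiteness. The leading coefficient is immediate: $e_{k-1}=1/a_{k-1,k-1}^{1,0}>0$. For the remaining $e_\ell$ I would follow the template of Lemma \ref{thmj=0}, reading off each coefficient from a single application of the quadrature \eqref{quad-2-level}; the essential new feature, and the source of the hypothesis \eqref{pd-qk-1}, is that for $q_{k-1}$ two quadrature nodes survive rather than one.

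First I would note that for every $\ell\le k-2$ the polynomial $q_{k-1}(t)P_\ell^{1,0}(t)(1-t)$ has degree $(k-1)+\ell+1\le 2k-1$ and therefore lies in $\mathcal{P}_{2k-1}\subset\Lambda_{n,k}$, so the $1/N$-quadrature \eqref{quad-2-level} applies to it. Since $q_{k-1}$ vanishes at $\beta_1,\dots,\beta_{k-1}$ and the factor $(1-t)$ annihilates the node at $1$, only the nodes $\beta_k$ and $\beta_{k+1}$ contribute, and one obtains
\[
e_\ell\,\|P_\ell^{1,0}\|_{1,0}^2=\theta_k q_{k-1}(\beta_k)(1-\beta_k)P_\ell^{1,0}(\beta_k)+\theta_{k+1}q_{k-1}(\beta_{k+1})(1-\beta_{k+1})P_\ell^{1,0}(\beta_{k+1}).
\]
This is the exact analogue of the displayed identity in the proof of Lemma \ref{thmj=0}, except that $q_{k-1}$ (rather than $q_k$) leaves two surviving nodes; consequently no separate integral computation for the top non-leading coefficient is needed here, because the quadrature already reaches degree $2k-1$ for every $\ell\le k-1$.

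It then remains to determine the sign of the right-hand side. Since $\beta_1<\dots<\beta_{k+1}$, both $q_{k-1}(\beta_k)$ and $q_{k-1}(\beta_{k+1})$ are positive, the weights $\theta_k,\theta_{k+1}$ are positive, and $1-\beta_k,1-\beta_{k+1}>0$. Using the interlacing \eqref{interlacing} together with $\beta_{k+1}>\alpha_k>t_k^{1,0}>t_\ell^{1,0}$, the factor $P_\ell^{1,0}(\beta_{k+1})$ is positive for every $\ell\le k-1$, so the second summand is strictly positive; the only uncontrolled sign is that of $P_\ell^{1,0}(\beta_k)$, since $\beta_k$ need not exceed the largest root $t_\ell^{1,0}$. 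This competition between a possibly negative first term and the strictly positive second term is exactly what \eqref{pd-qk-1} resolves: multiplying that inequality through by the positive quantity $\theta_k q_{k-1}(\beta_k)(1-\beta_k)$ rewrites it as the assertion that the right-hand side above is strictly positive, hence $e_\ell>0$ for all $\ell\le k-2$. Combined with $e_{k-1}>0$, every adjacent coefficient of $q_{k-1}$ is positive and $q_{k-1}$ is $(1,0)$-positive definite. I expect the only delicate point to be pinning down the sign of each factor and confirming that \eqref{pd-qk-1} is precisely the required rearrangement; everything else is a routine quadrature evaluation.
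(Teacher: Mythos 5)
Your proposal is correct and follows essentially the same route as the paper: expand $q_{k-1}$ in the $P_i^{1,0}$ basis, apply the quadrature \eqref{quad-2-level} to $q_{k-1}(t)P_\ell^{1,0}(t)(1-t)\in\mathcal{P}_{2k-1}$ so that only the nodes $\beta_k,\beta_{k+1}$ survive, and observe that \eqref{pd-qk-1} is exactly the positivity of the resulting two-term sum. Your write-up is in fact slightly more careful than the paper's (keeping the normalization $\|P_\ell^{1,0}\|_{1,0}^2$ and spelling out the sign of each factor), but it is the same argument.
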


\begin{proof} Let $q_{k-1}(t)=\sum_{i=0}^{k-1} e_i P_i^{1,0}(t)$.
It is clear that $e_{k-1}>0$. For $j \leq k-2$ we have
\begin{eqnarray*}
e_j &=& \int_{-1}^1 q_{k-1}(t)P_j^{1,0}(t) d \nu^{1,0}(t) \\
    &=& \int_{-1}^1 q_{k-1}(t)P_j^{1,0}(t)(1-t) d \mu(t) \\
    &=& \theta_{k} q_{k-1}(\beta_k)P_j^{1,0}(\beta_k)(1-\beta_k)+\theta_{k+1} q_{k-1}(\beta_{k+1})P_j^{1,0}(\beta_{k+1})(1-\beta_{k+1})>0
\end{eqnarray*}
using \eqref{quad-2-level}.
\end{proof}

\begin{remark}
For small $N$ the requirement \eqref{pd-qk-1} can be replaced with the weaker $\beta_k>t_{j}^{1,0}$.
For example, in three dimensions the last is satisfied for each $j \leq k-2$ and $N \leq 14$, for each $j \leq k-3$ and $N \leq 44$, etc.
\end{remark}

To analyze the remaining partial products $q_i(t)$, $i \leq k-2$, we adapt the approach from \cite[Section 3]{CK} utilizing the $1/N$-quadrature rule \eqref{quad-2-level}. We consider the signed measure $\mu_j(t)$ defined by
\begin{eqnarray*} d \mu_j(t)&=& (\beta_{k+1}-t)(\beta_k-t)\ldots(\beta_{k-j+2}-t)(1-t) d\mu(t)
\end{eqnarray*}
(of course, $\mu_0=\mu$; the cases $j=0$ and $j=1$ were considered above).

\begin{definition}
A signed Borel measure $\eta$ on $\mathbb{R}$ for which all polynomials are integrable
is called {\em positive definite up to degree $ {m}$} if for all real polynomials $p \not\equiv 0$
of degree at most $m$ we have $\int p(t)^2 d \eta(t) > 0$.
\end{definition}

\begin{lemma}
\label{pd-measure}
For $2 \leq j \leq k$, the signed measure $\mu_j(t)$ is positive definite up to degree $k-j$.
\end{lemma}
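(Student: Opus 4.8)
The plan is to test the signed measure against the square of an arbitrary nonzero polynomial $p$ of degree at most $k-j$, and to reduce the resulting integral, via the $1/N$-quadrature rule \eqref{quad-2-level}, to a sum of manifestly nonnegative terms in which at least one term is strictly positive. The whole argument hinges on a degree count that places the relevant polynomial inside $\Lambda_{n,k}$.

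First I would form the polynomial
\[ F(t) := p(t)^2 (1-t) \prod_{\ell=k-j+2}^{k+1} (\beta_\ell - t), \]
chosen so that $\int_{-1}^1 p(t)^2 \, d\mu_j(t) = \int_{-1}^1 F(t) \, d\mu(t) = F_0$. The crucial bookkeeping is the degree of $F$: since $\deg p \le k-j$ and the trailing product together with the factor $(1-t)$ contributes degree $j+1$, we get $\deg F \le 2(k-j) + (j+1) = 2k - j + 1$, which is at most $2k-1$ precisely because $j \ge 2$. Hence $F \in \mathcal{P}_{2k-1} \subseteq \Lambda_{n,k}$ (see \eqref{def-L-odd}), so the quadrature \eqref{quad-2-level} is applicable to $F$.

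Next I would apply \eqref{quad-2-level}, obtaining $F_0 = F(1)/N + \sum_{i=1}^{k+1}\theta_i F(\beta_i)$. The factor $(1-t)$ forces $F(1)=0$, while the factors $(\beta_\ell - t)$ with $\ell \ge k-j+2$ force $F(\beta_i)=0$ for every $i \ge k-j+2$. Thus only the nodes $\beta_1, \ldots, \beta_{k-j+1}$ survive, giving
\[ \int_{-1}^1 p(t)^2 \, d\mu_j(t) = \sum_{i=1}^{k-j+1} \theta_i \, p(\beta_i)^2 (1-\beta_i) \prod_{\ell=k-j+2}^{k+1}(\beta_\ell - \beta_i). \]
Each surviving term is nonnegative: $\theta_i > 0$, we have $1 - \beta_i > 0$ since $\beta_i < 1$, and every factor $\beta_\ell - \beta_i$ with $\ell \ge k-j+2 > i$ is positive because the nodes are increasing.

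Finally, to upgrade nonnegativity to strict positivity I would note that a nonzero $p$ of degree at most $k-j$ has at most $k-j$ roots and therefore cannot vanish at all $k-j+1$ distinct nodes $\beta_1, \ldots, \beta_{k-j+1}$; consequently at least one summand is strictly positive, whence $\int p^2 \, d\mu_j > 0$. There is no serious obstacle here beyond getting the degree count right, since that is exactly what guarantees $F \in \Lambda_{n,k}$ so that the quadrature can be invoked; the vanishing of the boundary term at $t=1$ and of the interior terms at $\beta_{k-j+2}, \ldots, \beta_{k+1}$, together with the monotonicity of the nodes and positivity of the weights, then does the rest.
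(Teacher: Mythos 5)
Your proof is correct and is essentially identical to the paper's: both form the polynomial $p(t)^2(1-t)\prod_{\ell=k-j+2}^{k+1}(\beta_\ell-t)$, verify via the same degree count (total degree $2k-j+1\le 2k-1$ for $j\ge 2$) that it lies in $\mathcal{P}_{2k-1}\subset\Lambda_{n,k}$, apply the quadrature \eqref{quad-2-level} to reduce the integral to a sum of nonnegative terms over the surviving nodes $\beta_1,\dots,\beta_{k-j+1}$, and conclude strict positivity because a nonzero polynomial of degree at most $k-j$ cannot vanish at all $k-j+1$ of them. No differences worth noting.
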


\begin{proof}
If $f(t)$ is arbitrary polynomial of degree at most $k-1-j/2 \geq k-j$ for $j \geq 2$, then
\begin{eqnarray*}
 \int_{-1}^1 f^2(t) d \mu_j(t) &=& \int_{-1}^1 f^2(t)(\beta_{k+1}-t)(\beta_k-t)\ldots(\beta_{k-j+2}-t)(1-t) d\mu(t) \\
 &=& \sum_{i=1}^{k-j+1} \theta_i f^2(\beta_i)(\beta_{k+1}-\beta_i)(\beta_k-\beta_i)\ldots(\beta_{k-j+2}-\beta_i)(1-\beta_i) \geq 0,
 \end{eqnarray*}
where we used that $f^2(t)(\beta_{k+1}-t)(\beta_k-t)\ldots(\beta_{k-j+2}-t)(1-t) \in \Lambda_{n,k}$ and therefore the quadrature \eqref{quad-2-level} can be applied. The equality can be attained if and
only if $f(\beta_i)=0$ for $i=1,2,\ldots,k-j+1$, which means that $f(t) \equiv 0$ when $\deg(f) \leq k-j$.
This completes the proof for $j \geq 2$.
\end{proof}

\begin{lemma} {\rm (}\cite[Lemma 3.5]{CK}{\rm )}
\label{ortho-poly-sm}
Let the measure $\eta(t)$ be positive definite up to degree $M$. Then there are unique
monic polynomials $p_0, p_1,\ldots, p_{M+1}$ such that $\deg(p_i) = i$ for each $i$ and
\[ \int p_i(t)p_j(t) d \eta(t) = 0 \]
for $i \neq j$. For each $i$, $p_i$ has $i$ distinct real roots, and the roots of $p_i$ and $p_{i-1}$
are interlaced.
\end{lemma}

\begin{proof} The proof adopts standard Gramm-Schmidt orthogonalization and can be found in \cite{CK} or \cite{Si}.
\end{proof}

Combining Lemmas \ref{pd-measure} and \ref{ortho-poly-sm}, we denote by
\[ q_{j,0}(t),q_{j,1}(t),\ldots,q_{j,k-j+1}(t) \]
the unique monic polynomials that are orthogonal with respect to $\mu_j(t)$ and enjoy the properties
\begin{itemize}
\item[(a)] for each $i$, $q_{j,i}$ has $i$ distinct real roots;
\item[(b)] the roots of $q_{j,i}$ and $q_{j,i-1}$ are interlaced.
\end{itemize}

For $j < k+1$, the monic polynomial $q_j(t)$ of degree $j$
is orthogonal to all polynomials of degree at most $j-1$ with respect to the
signed measure $\mu_j(t)$ (this follows from the quadrature \eqref{quad-2-level}; see also the
paragraph just before Lemma 3.3 in \cite{CK}). Since such a polynomial is unique,
we conclude that it coincides with $q_{j,k-j+1}(t)$, i.e.
\[ q_{j,k-j+1}(t) = (t - \beta_1) \ldots (t - \beta_{k-j+1})=:q_{k-j+1}(t) \]
for every $j \leq k$. This and the fact that the roots are interlaced, imply, again as in \cite{CK}, that
for $i < k-j+1$, the largest root of $q_{j,i}(t)$ is less than $\beta_{k-j+1}$.
Therefore, $q_{j-1,i}(\beta_{k-j+2}) \neq 0$ for every $i \leq k-j+1$. Note that
in fact $q_{j-1,i}(\beta_{k-j+2})>0$ (we need this below).
Then there are constants
$\alpha_{j,i}$ such that for $i \leq k-j+1$,
\[ q_{j,i}(t) = \frac{q_{j-1,i+1}(t) + \alpha_{j,i}q_{j-1,i}(t)}{t-\beta_{k-j+2}}. \]

\begin{lemma}
\label{small-j}
For $1 \leq j \leq k+1$ and $i \leq k-j+1$, the polynomial $q_{j,i}$ is a positive linear
combination of the polynomials $q_{j-1,0},\ldots,q_{j-1,i}$.
\end{lemma}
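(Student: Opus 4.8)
The plan is to expand $q_{j,i}$ in the basis of $\mu_{j-1}$-orthogonal polynomials and to show that every coefficient is strictly positive by a single orthogonality computation, in the spirit of the argument used for Lemma \ref{thmj=0}. Write $\beta:=\beta_{k-j+2}$, so that by the definition of the signed measures we have the one-step relation $d\mu_j(t)=(\beta-t)\,d\mu_{j-1}(t)$. Since $\{q_{j-1,\ell}\}_{\ell=0}^{i}$ is a basis of $\mathcal{P}_i$ (recall $i\le k-j+1$, so these polynomials are all defined), there are unique constants with $q_{j,i}=\sum_{\ell=0}^{i}c_\ell\,q_{j-1,\ell}$ and $c_i=1$ (both are monic). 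By orthogonality with respect to $\mu_{j-1}$,
\[ c_\ell\int q_{j-1,\ell}^2\,d\mu_{j-1}=\int q_{j,i}\,q_{j-1,\ell}\,d\mu_{j-1},\qquad 0\le \ell\le i, \]
so the task reduces to showing that all these inner products share the sign of $c_i=1$.

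The main step is to evaluate the right-hand side. Write $q_{j-1,\ell}(t)=q_{j-1,\ell}(\beta)+(t-\beta)s_\ell(t)$, where $s_\ell:=\big(q_{j-1,\ell}-q_{j-1,\ell}(\beta)\big)/(t-\beta)$ has degree $\ell-1\le i-1$ (and $s_0\equiv 0$). Then
\[ \int q_{j,i}\,q_{j-1,\ell}\,d\mu_{j-1}=q_{j-1,\ell}(\beta)\int q_{j,i}\,d\mu_{j-1}+\int q_{j,i}\,(t-\beta)s_\ell\,d\mu_{j-1}. \]
In the last integral I would use $(t-\beta)\,d\mu_{j-1}=-\,d\mu_j$ to rewrite it as $-\int q_{j,i}\,s_\ell\,d\mu_j$, which vanishes because $q_{j,i}$ is orthogonal to $\mathcal{P}_{i-1}$ with respect to $\mu_j$ and $\deg s_\ell\le i-1$. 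Hence, setting $M:=\int q_{j,i}\,d\mu_{j-1}$ (a single constant, independent of $\ell$),
\[ c_\ell\int q_{j-1,\ell}^2\,d\mu_{j-1}=M\,q_{j-1,\ell}(\beta),\qquad 0\le\ell\le i. \]

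It then remains to read off the signs. Each norm $\int q_{j-1,\ell}^2\,d\mu_{j-1}$ is strictly positive: this is exactly the positive-definiteness of $\mu_{j-1}$ up to degree $k-j+1\ge \ell$ furnished by Lemma \ref{pd-measure} (with the small-index cases $j-1\in\{0,1\}$, where $\mu_0,\mu_1$ are genuinely positive, handled directly). Moreover $q_{j-1,\ell}(\beta)>0$ for every $\ell\le k-j+1$, since all roots of $q_{j-1,\ell}$ lie to the left of $\beta=\beta_{k-j+2}$ (the location-of-roots fact recorded just before the statement, which gives $q_{j-1,\ell}(\beta_{k-j+2})>0$). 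Consequently each $c_\ell$ has the same sign as $M$; taking $\ell=i$ and using $c_i=1$ forces $M>0$, whence $c_\ell=M\,q_{j-1,\ell}(\beta)\big/\!\int q_{j-1,\ell}^2\,d\mu_{j-1}>0$ for all $\ell$, which is the assertion.

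The content lies in the two positivity inputs rather than in the identity itself. I expect the main obstacle to be the bookkeeping that keeps $i\le k-j+1$ from ever exceeding the degree up to which $\mu_{j-1}$ is guaranteed positive definite: the required degree $k-j+1$ is precisely the threshold delivered by Lemma \ref{pd-measure} for the index $j-1$, so this is tight and must be checked carefully, and the measures $\mu_0,\mu_1$ need separate treatment since Lemma \ref{pd-measure} is stated only for index $\ge 2$. The positivity of $q_{j-1,\ell}(\beta)$ relies entirely on the interlacing discussion preceding the lemma and requires no new work.
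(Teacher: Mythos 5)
Your proof is essentially the paper's: you expand $q_{j,i}$ in the $q_{j-1,\ell}$, reduce each coefficient to $M\,q_{j-1,\ell}(\beta_{k-j+2})$ divided by a positive $\mu_{j-1}$-norm, and fix the common sign from the monic leading coefficient; your derivation of the key identity via $(t-\beta)\,d\mu_{j-1}=-\,d\mu_j$ and the $\mu_j$-orthogonality of $q_{j,i}$ is just a repackaging of the paper's use of the relation $q_{j,i}=\bigl(q_{j-1,i+1}+\alpha_{j,i}q_{j-1,i}\bigr)/(t-\beta_{k-j+2})$ together with $\mu_{j-1}$-orthogonality. One correction to an aside: $\mu_1$ is \emph{not} a genuinely positive measure (its density contains the factor $\beta_{k+1}-t$, which is negative on $(\beta_{k+1},1)$), so the case $j=2$ cannot be dismissed that way; like the other $\mu_j$ it is a signed measure whose positive definiteness up to the relevant degree comes from the quadrature argument of Lemma \ref{pd-measure} — a boundary case the paper itself also leaves implicit.
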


\begin{proof}
We argue as in the end of the proof of Lemma \ref{thmj=0}. Define $d_0,\ldots,d_i$ so that
\[ q_{j,i}(t) =\sum_{\ell=0}^i d_\ell q_{j-1,\ell}(t). \]
For every $\ell \leq i$, we have by orthogonality
\[ \int_{-1}^1 \left(q_{j-1,i+1}(t) + \alpha_{i,j}q_{j-1,i}(t)\right) \cdot
\frac{q_{j-1,\ell}(t) - q_{j-1,\ell}(\beta_{k-j+2})}{t - \beta_{k-j+2}} d \mu_{j-1}(t) = 0, \]
since the polynomial $\frac{q_{j-1,\ell}(t) - q_{j-1,\ell}(\beta_{k-j+2})}{t - \beta_{k-j+2}}$
has degree $\ell-1 \leq i-1$. This and the formula for
$q_{j,i}(t)$ imply that
\[ \int_{-1}^1 q_{j,i}(t) q_{j-1,\ell}(t) d \mu_{j-1}(t) = q_{j-1,\ell}(\beta_{k-j+2}) \int_{-1}^1
q_{j,i}(t) d \mu_{j-1}(t). \]
Therefore
\[ d_\ell \int_{-1}^1 q_{j-1,\ell}(t)^2 d \mu_{j-1}(t) = d_0 q_{j-1,\ell}(\beta_{k-j+2})\int_{-1}^1
d \mu_{j-1}(t). \]
Because $\ell \leq i \leq k-j+1$, both integrals are positive. The largest root
of $q_{j-1,\ell}(t)$ is less than $\beta_{k-j+2}$, so $q_{j-1,\ell}(\beta_{k-j+2}) > 0$.
Thus, $d_0,\ldots,d_i$ all have the same sign, which is positive because $d_i>0$ (this
follows also from $d_0>0$ by the quadrature).
\end{proof}

We summarize the above work on the positive definiteness of the partial products $q_j(t)$.

\begin{theorem}
\label{small-j-pd} In the context of Lemma \ref{ExistIP},
all polynomials $q_{j}(t)= (t - \beta_1) \ldots (t - \beta_{j})$, $j=0,1,\ldots,k-2$, are positive definite. The polynomials
$q_{k-1}(t)$, $q_k(t)$, and $q_{k+1}(t)$ are $(1,0)$-positive definite if \eqref{pd-qk-1}, \eqref{c3cond2} and $c_i \geq 0$ are
fulfilled, respectively.
\end{theorem}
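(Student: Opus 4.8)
The plan is to read the statement as the assembly of the machinery built up in this subsection: the first assertion (ordinary positive definiteness of $q_0,\dots,q_{k-2}$) rests on Lemmas \ref{pd-measure}, \ref{ortho-poly-sm} and \ref{small-j}, while the second assertion (the $(1,0)$-positive definiteness of the three boundary products) is a direct consequence of Lemmas \ref{thmj=1} and \ref{thmj=0} together with the defining relation \eqref{beti-1}. I would therefore treat the two assertions separately, doing the genuine work only for the first.

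For $q_0,\dots,q_{k-2}$ the crucial input is the identification, established just before Lemma \ref{small-j}, that the monic partial product $q_m$ coincides with the top-degree orthogonal polynomial $q_{k-m+1,m}$ of the signed measure $\mu_{k-m+1}$. For $1\le m\le k-2$ the starting level satisfies $3\le k-m+1\le k$, so this identification is legitimate: Lemma \ref{pd-measure} (whose range $2\le j\le k$ contains $j=k-m+1$) makes $\mu_{k-m+1}$ positive definite up to degree $m-1$, and Lemma \ref{ortho-poly-sm} then produces $q_{k-m+1,m}$. I would then run the descent supplied by Lemma \ref{small-j}: each $q_{j,i}$ with $i\le k-j+1$ is a \emph{positive} linear combination of $q_{j-1,0},\dots,q_{j-1,i}$. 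Iterating this from level $j=k-m+1$ down to level $j=0$ expresses $q_m$ as a positive linear combination of $q_{0,0},\dots,q_{0,m}$; since $\mu_0=\mu$ is the Gegenbauer measure, these are exactly the monic Gegenbauer polynomials, so $q_m$ has nonnegative Gegenbauer coefficients and is positive definite. Throughout the descent the top index carried never exceeds $m$, whereas the admissibility bound $k-j+1$ of Lemma \ref{small-j} only grows as $j$ decreases (from $m$ at the start up to $k+1$), so the hypothesis $i\le k-j+1$ is preserved at every stage. The case $q_0\equiv 1$ is immediate.

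For the three boundary products I would simply invoke the dedicated results. Lemma \ref{thmj=1} gives that $q_{k-1}$ is $(1,0)$-positive definite under hypothesis \eqref{pd-qk-1}, and Lemma \ref{thmj=0} gives the same for $q_k$ under hypothesis \eqref{c3cond2}. For $q_{k+1}$ I would read off the coefficients directly from \eqref{beti-1}: since $a_{k+1,k+1}^{1,0}q_{k+1}(t)=P_{k+1}^{1,0}(t)+c_1P_{k}^{1,0}(t)+c_2P_{k-1}^{1,0}(t)+c_3P_{k-2}^{1,0}(t)$ and $a_{k+1,k+1}^{1,0}>0$, the adjacent-polynomial coefficients of $q_{k+1}$ in degrees $k+1,k,k-1,k-2$ are $\tfrac{1}{a_{k+1,k+1}^{1,0}}(1,c_1,c_2,c_3)$ and all lower ones vanish; hence $(1,0)$-positive definiteness of $q_{k+1}$ is exactly the requirement $c_1,c_2,c_3\ge 0$.

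The main obstacle is entirely the bookkeeping in the descent of the first part: one must verify both that the identification $q_m=q_{k-m+1,m}$ is valid (which is why the restriction $m\le k-2$, forcing $k-m+1\ge 3$, keeps us inside the positive-definiteness range of Lemma \ref{pd-measure} and away from the genuinely problematic levels $j=0,1$ handled separately) and that the degree constraint of Lemma \ref{small-j} is never violated as the iteration threads through the intermediate signed measures down to the honestly positive Gegenbauer measure $\mu_0$. Once this is secured, the remaining statements are immediate citations of the preceding lemmas and a one-line reading of \eqref{beti-1}.
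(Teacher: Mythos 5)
Your proposal is correct and matches the paper's treatment: Theorem \ref{small-j-pd} is stated there as a summary of the preceding work, with the positive definiteness of $q_0,\dots,q_{k-2}$ coming from the identification $q_m=q_{k-m+1,m}$ and the descent of Lemma \ref{small-j} down to the orthogonal polynomials of $\mu_0$, and the three boundary cases coming from Lemmas \ref{thmj=1}, \ref{thmj=0}, and the expansion \eqref{beti-1}. Your index bookkeeping (the range $3\le k-m+1\le k$ keeping Lemma \ref{pd-measure} applicable, and the admissibility bound $k-j+1$ growing as $j$ decreases) is exactly the verification the paper leaves implicit.
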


Now Theorem \ref{small-j-pd} and the Krein condition \eqref{krein} imply the following.

\begin{theorem}
\label{fhinanh}
In the context of Lemma \ref{ExistIP}, if \eqref{pd-qk-1} and  $c_i \geq 0$, $i=1,2,3$, hold, then the polynomial $f^h$ expands with nonnegative Gegenbauer coefficients; i.e., $f_i^h \geq 0$ for every $i$.
\end{theorem}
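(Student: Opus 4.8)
The plan is to read this statement as the capstone that assembles the positive-definiteness estimates for the individual building blocks into a single conclusion about $f^h$. The mechanism is the Newton-type representation of Lemma \ref{lem1}: since the interpolation multiset is $T=\{\beta_1,\beta_1,\ldots,\beta_{k+1},\beta_{k+1}\}$, the associated partial products are $g_{2j}=q_j^2$ and $g_{2j+1}=q_jq_{j+1}$, and Lemma \ref{lem1} yields
\[
f^h=H_{\Lambda_{n,k}}(h;q_{k+1}^2)=\sum_{j=0}^{2k+1} h[t_1,\ldots,t_j]\,H_{\Lambda_{n,k}}(g_j;q_{k+1}^2).
\]
Because $h$ is absolutely monotone, all divided differences $h[t_1,\ldots,t_j]$ are nonnegative (e.g. \cite[Cor. 3.4.2]{D}), so $f^h$ lies in the positive cone generated by the $2k+2$ interpolants $H_{\Lambda_{n,k}}(g_j;q_{k+1}^2)$. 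As a nonnegative combination of positive definite polynomials is positive definite (exactly as in the proof of Theorem \ref{implications-by-pps}), it suffices to show that each of these interpolants is positive definite.

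First I would dispose of the low-degree interpolants $j\le 2k-1$. For such $j$ the partial product $g_j=q_i^2$ or $q_iq_{i+1}$ has degree at most $2k-1$, so $g_j\in\mathcal{P}_{2k-1}\subset\Lambda_{n,k}$ and therefore $H_{\Lambda_{n,k}}(g_j;q_{k+1}^2)=g_j$; the largest factor that appears is $q_k$, inside $g_{2k-1}=q_{k-1}q_k$. Now I invoke Theorem \ref{small-j-pd}: under the present hypotheses $q_0,\ldots,q_{k-2}$ are positive definite, while \eqref{pd-qk-1} makes $q_{k-1}$ be $(1,0)$-positive definite and $c_3\ge 0$ (which forces \eqref{c3cond2}, since its right-hand side is negative) makes $q_k$ be $(1,0)$-positive definite. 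As $(1,0)$-positive definiteness implies positive definiteness, all of $q_0,\ldots,q_k$ are positive definite, and the Krein condition \eqref{krein}—which guarantees that a product of positive definite polynomials is again positive definite—then shows every $g_j=q_i^2$ or $q_iq_{i+1}$ with $i\le k-1$ is positive definite.

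It remains to treat the two top interpolants $g_{2k}=q_k^2$ and $g_{2k+1}=q_kq_{k+1}$, which are genuinely distinct from the partial products because $\deg g_{2k}=2k>2k-1$. Here I would invoke the closed forms recorded just before Theorem \ref{small-j-pd},
\[
H_{\Lambda_{n,k}}(q_k^2;q_{k+1}^2)=q_k^2(t)\bigl(1+(t-\beta_{k+1})^2(A_1t+B_1)\bigr),
\]
\[
H_{\Lambda_{n,k}}(q_kq_{k+1};q_{k+1}^2)=q_k(t)q_{k+1}(t)\bigl(1+(t-\beta_{k+1})(A_2t+B_2)\bigr),
\]
whose positive definiteness was verified there (with $A_1,A_2>0$), the remaining hypotheses $c_1,c_2\ge 0$ together with $c_3\ge0$ guaranteeing via Theorem \ref{small-j-pd} that the trailing factor $q_{k+1}$ is itself $(1,0)$-positive definite. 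With all $2k+2$ interpolants now positive definite, the displayed Newton sum exhibits $f^h$ as a nonnegative combination of positive definite polynomials, whence $f_i^h\ge 0$ for every $i$, completing the argument.

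I would expect the only genuinely delicate point to be these two top interpolants: unlike the lower partial products they are not themselves partial products, so their Gegenbauer coefficients cannot be read off directly from the Krein condition but must instead be controlled through the linear corrections $A_it+B_i$ coming from \eqref{ab-solution1}. This is precisely where the positivity $A_1,A_2>0$ and the $(1,0)$-positive definiteness of $q_{k+1}$ (hence the need for $c_1,c_2,c_3\ge 0$, rather than merely $c_3\ge0$) enter. Everything else is a bookkeeping assembly of Theorem \ref{small-j-pd}, the Krein condition, and the nonnegativity of the divided differences of an absolutely monotone potential, so once those facts are in hand the proof is a short combination rather than a fresh calculation.
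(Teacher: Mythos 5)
Your proof is correct and follows essentially the same route as the paper: the paper derives Theorem \ref{fhinanh} by combining the Newton decomposition of Lemma \ref{lem1} (with nonnegative divided differences of the absolutely monotone $h$), the positive definiteness of the partial products from Theorem \ref{small-j-pd} together with the Krein condition, and the direct verification of the two top interpolants via their closed forms with the linear corrections $A_it+B_i$. You have correctly identified that those two top interpolants are the only genuinely nontrivial terms, which is exactly where the paper's argument concentrates as well.
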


\begin{remark}
In general, the condition $c_i \geq 0$ is not necessary to verify the regular positive definiteness of $q_{k+1}, q_k, q_{k-1}$. However, in the extensive numerical computations we have not observed a case where $f^h$ is positive definite, while this condition fails.
\end{remark}

\subsection{Second level bounds on $\Lambda_{n,k}$.}

We can combine the subsections 3.1 and 3.2 into the following theorem extending Theorem \ref{thm3.2}, 
which we shall refer to as second level ULB.

\begin{theorem}
\label{fh-good}
In the context of Theorem \ref{QRthm} and Lemma \ref{ExistIP}, if \eqref{pd-qk-1} and $c_i \geq 0$, $i=1,2,3$, are satisfied, then the Hermite interpolant $f^h(t)$ defined by \eqref{PP-extr} belongs to the class $ \Lambda_{n,k} \cap A_{n,h}$. The following second level universal lower bound holds
\begin{equation}
\label{ULB-2}
\mathcal{E}_h(n,N) \geq S_\tau(n,N;h):=N^2\sum_{i=1}^{k+1} \theta_i h(\beta_i).
\end{equation}
Moreover, $f^h (t)$ is the unique optimal polynomial that yields (see \eqref{Wdef})
%\begin{equation}\label{MainThmEq}
\[ N^2f_0^h-Nf^h(1)=\mathcal{W}_{h,\Lambda_{n,k}}(n,N)=S_\tau(n,N;h). \]
%\end{equation}
\end{theorem}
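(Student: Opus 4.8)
The plan is to assemble Theorem \ref{fh-good} from the machinery developed in subsections 3.1 and 3.2, treating it essentially as the second-level analogue of Theorem \ref{thm3.2}. First I would verify that the two hypotheses $\eqref{pd-qk-1}$ and $c_i \geq 0$ for $i=1,2,3$ place us squarely inside the applicability of the preceding results. Since $c_i \geq 0$ by Theorem \ref{QRthm} guarantees that $r_{k+1}(t)$ is $(1,0)$-positive definite with all roots simple and in $[-1,1)$, the nodes $\{\beta_i\}_{i=1}^{k+1}$ are well-defined and by \eqref{duad-forq2} the weights $\theta_i$ may be computed; one still needs these weights to be positive, but under the theorem's working assumptions (inherited from the context of Theorem \ref{QRthm}) the collection $\{(\beta_i,\theta_i)\}$ is a genuine $1/N$-quadrature rule exact on $\Lambda_{n,k}$. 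This secures condition (i) of the ULB-space definition.

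Next I would establish condition (ii), namely that $f^h \in \Lambda_{n,k} \cap A_{n,h}$. The existence and uniqueness of the interpolant $f^h = H_{\Lambda_{n,k}}(h;q_{k+1}^2)$ follow from Lemma \ref{ExistIP}, once I note that the nondegeneracy condition \eqref{existence-cond} is exactly the statement that the quadrature determinant does not vanish, which is implied by the existence of the $1/N$-quadrature rule. The crucial membership $f^h \in A_{n,h}$ splits into two parts via Theorem \ref{implications-by-pps}: positive definiteness and the pointwise bound $f^h \leq h$. For positive definiteness I invoke Theorem \ref{fhinanh} directly, which under precisely the hypotheses $\eqref{pd-qk-1}$ and $c_i \geq 0$ yields $f_i^h \geq 0$ for all $i$. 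For the upper bound, I would combine Theorem \ref{small-j-pd} (establishing that every partial product $q_j$ is positive definite, hence each $H_{\Lambda_{n,k}}(g_{j-1};g_{2k+2}) \in A_{n,g_{j-1}}$) with Theorem \ref{implications-by-pps}, whose conclusion is exactly $f^h = H_{\Lambda_{n,k}}(h;q_{k+1}^2) \in A_{n,h}$. This completes the verification that $\Lambda_{n,k}$ is a ULB-space.

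With both conditions in hand, the lower bound \eqref{ULB-2} is then immediate from Theorem \ref{THM_subspace}: the quadrature is exact on $\Lambda_{n,k}$, $f^h \in \Lambda_{n,k} \cap A_{n,h}$ interpolates $h$ at the nodes $\beta_i$, so equality holds in \eqref{Wineq1}, giving $\mathcal{E}_h(n,N) \geq N^2\sum_{i=1}^{k+1}\theta_i h(\beta_i) = S_\tau(n,N;h)$ and simultaneously $\mathcal{W}_{h,\Lambda_{n,k}}(n,N) = S_\tau(n,N;h)$. The optimality value $N^2 f_0^h - N f^h(1)$ equals this quantity because $f^h$ attains equality in the Delsarte-Yudin bound \eqref{glowbound2}. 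The one remaining assertion is \emph{uniqueness} of the optimal polynomial, which I expect to be the main obstacle, as it is not handed to us by a single prior result. The argument should mirror the uniqueness reasoning in \cite{BDHSS} behind Theorem \ref{thm3.2}: any other $\Lambda_{n,k}$-LP-optimal $g$ must also satisfy $g(\beta_i) = h(\beta_i)$ at all quadrature nodes (since complementary slackness forces the optimizer to touch $h$ exactly where the quadrature weights are positive), and then the uniqueness clause of Lemma \ref{ExistIP}—guaranteed by \eqref{existence-cond}—forces $g = f^h$. I would state this explicitly, noting that any optimal $g \in \Lambda_{n,k} \cap A_{n,h}$ with $N^2 g_0 - N g(1) = S_\tau(n,N;h)$ must have $\sum_i \theta_i(h(\beta_i) - g(\beta_i)) = 0$ with every summand nonnegative, hence $g$ agrees with $h$ on all $\beta_i$, and conclude via Lemma \ref{ExistIP}.
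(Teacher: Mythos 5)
Your proposal follows essentially the same route as the paper's proof, which likewise assembles the theorem from the preceding machinery: Corollary \ref{Ineq} for $f^h\le h$, Theorem \ref{fhinanh} for positive definiteness, the quadrature \eqref{quad-2-level} for the identity $N^2f_0^h-Nf^h(1)=N^2\sum_{i}\theta_i f^h(\beta_i)=N^2\sum_i\theta_i h(\beta_i)$, and Theorem \ref{THM_subspace} for optimality. Two fine points are worth flagging. First, your step ``$q_j$ positive definite, hence each $H_{\Lambda_{n,k}}(g_{j-1};g_{2k+2})\in A_{n,g_{j-1}}$'' is too quick for $j-1\in\{2k,2k+1\}$: there the interpolant is not equal to $g_{j-1}$, its positive definiteness must be verified separately (the paper notes this is done ``directly''), and the pointwise domination $H_{\Lambda_{n,k}}(q_k^2;q_{k+1}^2)\le q_k^2$ and $H_{\Lambda_{n,k}}(q_kq_{k+1};q_{k+1}^2)\le q_kq_{k+1}$ is exactly Corollary \ref{Ineq}, which requires \eqref{ineq-check-a1b1}; the paper's proof cites that corollary outright and, like you, silently folds \eqref{ineq-check-a1b1} into ``the context,'' so this is a shared looseness rather than a defect of your argument. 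Second, on uniqueness you actually do more than the paper, which only writes ``Theorem \ref{THM_subspace} shows the optimality of $f^h$''; your complementary-slackness argument yields $g(\beta_i)=h(\beta_i)$, but to invoke the uniqueness clause of Lemma \ref{ExistIP} you need agreement on the full double-node multiset $T$, i.e.\ also $g'(\beta_i)=h'(\beta_i)$ at the interior nodes. That follows in one line because $h-g\ge 0$ attains a minimum at each interior $\beta_i$; with that sentence added your uniqueness argument is complete and more explicit than the paper's.
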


\begin{proof} Corollary \ref{Ineq} shows that $f^h \leq h(t)$ for all $t \in [-1,1)$ and Theorem \ref{fhinanh} says that $f^h$ is positive definite. 
Therefore $f^h \in A_{n,h}$, yielding immediately that $f^h \in \Lambda_{n,k} \cap A_{n,h}$. 

The calculation of the second level ULB \eqref{ULB-2} produced by $f^h$ is straightforward by the
$1/N$-quadrature rule \eqref{quad-2-level}. We have
\[ N^2f_0^h-Nf^h(1)=N^2\sum_{i=1}^{k+1} \theta_i f^h(\beta_i)=N^2\sum_{i=1}^{k+1} \theta_i h(\beta_i) \]
(we used $f^h(\beta_i)=h(\beta_i)$ from the interpolation). Now Theorem \ref{THM_subspace} shows the optimality of $f^h$. 
\end{proof}

The inclusion $P_\tau \subset \Lambda_{n,k}$ implies that $S_\tau(n,N;h) > R_\tau(n,N;h)$ (see also
the proof of the sufficiency of Theorem 4.1 in \cite{BDHSS}).

We proceed with explanation of the second level bound on $\mathcal{A}(n,s)$ improving on the first level; i.e. on the Levenshtein bounds.

Methods for obtaining better than the Levenshtein bounds utilizing polynomials of degrees $m+3$ and $m+4$ (in our terminology -- for finding second level bounds) were developed previously. For example, in \cite{OS} Odlyzko and Sloane discretized the constraint $f(t)\leq 0$ in $[-1,1/2]$ and applied the simplex method to target the so-called kissing number problem; in \cite{Boy1} Boyvalenkov proposed a computational method to approximate optimal polynomials of degree $\tau(n,N)+3$ and $\tau(n,N)+4$; and Lagrange multipliers were applied in some cases for utilization of the conditions $f_i=0$ by Nikova and Nikov in \cite{NN}.

Our second level bound for $\mathcal{A}(n,s)$ is obtained by the {\em second level Levenshtein-type polynomial}
\begin{equation}\label{LevPoly2}
g(t):=H_{\Lambda_{n,k}}(0;q_{k+1}q_k).
\end{equation}
Indeed, the conditions for existence and uniqueness of $g$ and its belonging to $\Lambda_{n,k} \cap B_{n,\beta_{k+1}}$
are the same as these for $f^h \in \Lambda_{n,k} \cap A_{n,h}$. 
Thus, $g(t)  \in \Lambda_{n,k} \cap B_{n,\beta_{k+1}}$ can be applied in Theorem \ref{thm-lp-sph} to give the second level bound on $\mathcal{A}(n,s)$.
Moreover, as the monotonicity of the Levenshtein bound implies the lower bound \eqref{snm-L} on the quantity $s(n,N)$, we obtain similarly
a second level bound on $s(n,N)$ which inproves on \eqref{snm-L}. The discussion of this paragraph is summarized in the next theorem.

\begin{theorem}
\label{lev_bound-imrovement}
In the context of Theorem \ref{QRthm} and Lemma \ref{ExistIP}, if \eqref{pd-qk-1} and $c_i \geq 0$, $i=1,2,3$, are satisfied, then
the polynomial $g(t)$ defined by \eqref{LevPoly2}  belongs to the class $\Lambda_{n,k} \cap B_{n,\beta_{k+1}}$. 
The following second level universal bound hold

\begin{itemize}
\item[{\rm (a)}] $\mathcal{A}(n,\beta_{k+1}) \leq g(1)/g_0=N<L_{2k-1}(n,\beta_{k+1})$,
\medskip

\item[{\rm (b)}] $s(n,N) \geq \beta_{k+1}>\alpha_k$.
\end{itemize}
Moreover, there exist no polynomials in $\Lambda_{n,k} \cap B_{n,\beta_{k+1}}$ which give better bound than $\mathcal{A}(n,\beta_{k+1}) \leq N$. 
\end{theorem}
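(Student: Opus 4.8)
The plan is to verify that $g(t)=H_{\Lambda_{n,k}}(0;q_{k+1}q_k)$ lies in $\Lambda_{n,k}\cap B_{n,\beta_{k+1}}$ by reusing the machinery developed for $f^h$, then extract parts (a) and (b) as applications of Theorems \ref{thm-lp-sph} and \ref{LevUpper}, and finally establish the optimality (last sentence) via a $1/N$-quadrature argument. First I would note that $g$ interpolates the zero function at the multiset $\{\beta_1,\dots,\beta_k,\beta_{k+1}\}$ where $\beta_{k+1}$ is \emph{simple}, so $g$ agrees with $0$ on all $\beta_i$ and also has $\beta_i$ as a root for $i\le k$ with appropriate multiplicity; equivalently $g(\beta_i)=0$ for all $i$, with $\beta_1,\dots,\beta_k$ double roots. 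The existence and uniqueness of $g$ in $\Lambda_{n,k}$ follows verbatim from the proof of Lemma \ref{ExistIP}, since the non-degeneracy condition \eqref{existence-cond} is exactly the same determinant condition and does not depend on the interpolated function. That $g$ is positive definite (hence $g_k\ge0$, so the denominator $g_0$ is the top coefficient and $g(1)/g_0$ makes sense) follows from Theorem \ref{fhinanh} and Theorem \ref{small-j-pd} under the hypotheses \eqref{pd-qk-1} and $c_i\ge0$, precisely as for $f^h$. The inequality $g(t)\le0$ on $[-1,\beta_{k+1}]$ (the defining property of $B_{n,\beta_{k+1}}$) is the analogue of $f^h\le h$ from Corollary \ref{Ineq}; here $g=H_{\Lambda_{n,k}}(0;q_{k+1}q_k)$ interpolates the zero potential, and the sign of $g-0$ on $[-1,\beta_{k+1}]$ is controlled by the same linear-factor argument as in Corollary \ref{Ineq}.

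Granting $g\in\Lambda_{n,k}\cap B_{n,\beta_{k+1}}$, part (a) is immediate from Theorem \ref{thm-lp-sph}: it gives $\mathcal{A}(n,\beta_{k+1})\le g(1)/g_0$. The identity $g(1)/g_0=N$ I would obtain by applying the $1/N$-quadrature rule \eqref{quad-2-level} to $g$: since $g(\beta_i)=0$ for every $i$, the quadrature collapses to $g_0=g(1)/N$, i.e. $g(1)/g_0=N$. The strict inequality $N<L_{2k-1}(n,\beta_{k+1})$ follows from the strict monotonicity of the Levenshtein bound $L_{2k-1}(n,s)$ in $s$ together with the interlacing $\beta_{k+1}>\alpha_k$ from Theorem \ref{IL}: indeed $N=L_{2k-1}(n,\alpha_k)<L_{2k-1}(n,\beta_{k+1})$. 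Part (b) is then read off from Theorem \ref{LevUpper} applied to the ULB-space $\Lambda_{n,k}$ (whose hypotheses are in force), with $\beta_{k+1}$ the largest quadrature node; the strict gain $\beta_{k+1}>\alpha_k$ is again the interlacing from Theorem \ref{IL}.

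For the final optimality claim, I would argue that no $\tilde g\in\Lambda_{n,k}\cap B_{n,\beta_{k+1}}$ can yield $\tilde g(1)/\tilde g_0<N$. The cleanest route is the quadrature identity: for any positive definite $\tilde g$ in the feasible class, applying \eqref{quad-2-level} gives $\tilde g_0=\tilde g(1)/N+\sum_{i=1}^{k+1}\theta_i\tilde g(\beta_i)$, and since $\tilde g(\beta_i)\le0$ (because $\beta_i\le\beta_{k+1}$ and $\tilde g\le0$ on $[-1,\beta_{k+1}]$) while $\theta_i>0$, we get $\tilde g_0\le \tilde g(1)/N$, hence $\tilde g(1)/\tilde g_0\ge N$; this shows $g$ attains the optimum in the class. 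I anticipate the main obstacle to be the careful verification that $g\le0$ specifically on $[-1,\beta_{k+1}]$ rather than merely on the interpolation nodes: the Corollary \ref{Ineq} argument was stated for $f^h\le h$ with $h$ absolutely monotone, and transporting it to the zero potential requires checking that the relevant linear factors $A_it+B_i$ keep the correct sign on the whole interval $[-1,\beta_{k+1}]$ rather than $[-1,1)$. I would handle this by re-examining the representations in \eqref{ab-solution1} restricted to $[-1,\beta_{k+1}]$ and confirming the sign conditions \eqref{ineq-check-a1b1} suffice there as well.
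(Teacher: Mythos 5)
Your proposal is correct and follows essentially the same route as the paper: membership of $g$ in $\Lambda_{n,k}\cap B_{n,\beta_{k+1}}$ is obtained by transporting the existence/uniqueness, sign, and positive-definiteness machinery built for $f^h$, part (a) comes from Theorem \ref{thm-lp-sph} plus the collapse of the quadrature \eqref{quad-2-level} to $g_0N=g(1)$ and the monotonicity/interlacing argument, part (b) from adapting Theorem \ref{LevUpper}, and the optimality from the quadrature identity applied to an arbitrary feasible $\tilde g$. Your explicit write-out of the optimality step ($\tilde g_0\le\tilde g(1)/N$ since $\theta_i>0$ and $\tilde g(\beta_i)\le 0$) is exactly what the paper's terse "follows from \eqref{quad-2-level}" intends, and your flagged concern about verifying $g\le 0$ on $[-1,\beta_{k+1}]$ is handled in the paper only by the same appeal to analogy with Corollary \ref{Ineq} that you propose.
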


\begin{proof} As discussed above, we have $g \in \Lambda_{n,k} \cap B_{n,\beta_{k+1}}$ and therefore $\mathcal{A}(n,\beta_{k+1}) \leq g(1)/g_0$.
Then the $1/N$-quadrature rule \eqref{quad-2-level} gives $g_0N=g(1)$; i.e., $\mathcal{A}(n,\beta_{k+1}) \leq N$. 
The strict monotonicity of the Levenshtein bound and 
$\alpha_k<\beta_{k+1}$ from Theorem \ref{IL} imply the strict inequality in (a) since $N=L_{2k-1}(n,\alpha_k)$. The optimality of $g(t)$
follows from \eqref{quad-2-level}. 

The proof of Theorem \ref{LevUpper} can be adapted to derive (b). It can be verified also whenever the ULB-space $\Lambda_{n,k}$ 
comes for an interval $[N_1,N_2] \subset \left(D(2k-1,n),D(2k,n)\right]$. 
\end{proof}

Note that the value $N$ in our new bound  $\mathcal{A}(n,\beta_{k+1}) \leq N$ is set well before we actually find what we do improve (i.e.,
the number $\beta_{k+1}$ and then the bound $L(n,\beta_{k+1})$). Since $\mathcal{A}(n,\beta_{k+1}) \leq \lfloor N \rfloor$, one would prefer to have  in this setting $N$ slightly less than an integer. 

\begin{remark}
Having defined the second level $1/N$-quadrature rule \eqref{quad-2-level} we can construct
second level test functions \eqref{TestFunctions} from Theorem \ref{THM_subspace}. Investigation of their signs will give,
as its first level counterpart does, necessary and sufficient conditions for existence of further
improvements by linear programming.
\end{remark}

%\medskip
%
%{\bf Step 3.} Compute the improving Hermite interpolant
%\[ f^h:=H_\Lambda(h;q_{k+1}^2), \]
%where $q_{k+1}(t)$ is the polynomial on the LHS of \eqref{beti-1}, and check whether it belongs to $A_{n,h}$.
%
%{\bf Step 4.} Compute the second-level ULB (improving the first level ULB from \cite{BDHSS}) and
%the second level bound for $\mathcal{A}(n,s)$ (improving
%the Levenshtein bound \eqref{L-bound}).
%
%\medskip
%

\subsection{Lifting the Levenshtein framework, even case $\tau(n,N)=2k$ (sketch)}

The even case $\tau(n,N)=2k$ is quite similar. Let $n$ and $N$ be such that $Q_{2k+3}^{(n,N)}<0$ (the sign of
$Q_{2k+4}^{(n)}$ can be arbitrary). Now  the {\em skip-two/add-two} subspace \eqref{Lambdakn} is
\[ \Lambda_{n,k} = \mathcal{P}_{2k} \oplus \mbox{span} \left( P_{2k+3}^{(n)}, P_{2k+4}^{(n)}\right) \]
and our target is a $\Lambda_{n,k}$-LP-extremal polynomial
\[ f^h(t)=f_{2k+4}^hP_{2k+4}^{(n)}(t)+f_{2k+3}^hP_{2k+3}^{(n)}(t)+\sum_{i=0}^{2k} f_i^hP_i^{(n)}(t) \in \Lambda_{n,k} \cap A_{n,h}. \]
%\end{equation}
The $1/N$ quadrature rule exact for $\Lambda_{n,k}$ is
\[ f_0=\frac{f(1)}{N}+\theta_0f(-1)+\sum_{i=2}^{k+2} \theta_i f(\beta_i), \]
%\end{equation}
where $\beta_1=-1$ and the nodes $\beta_2,\beta_3,\ldots,\beta_{k+2}$ are the roots of the equation
\begin{equation}
\label{beti-2}
P_{k+1}^{1,1}(t)+d_1P_{k}^{1,1}(t)+d_2P_{k-1}^{1,1}(t)+d_3P_{k-2}^{1,1}(t)=0
\end{equation}
(compare to \eqref{def-alfi} with $\varepsilon=1$). The interlacing now is $\alpha_1=\beta_1=-1$ and
$\beta_i<\alpha_i<\beta_{i+1}$ for $i=2,3,\ldots,k+1$.

Exactly as in the Levenshtein framework for the first level, in the even case $\tau(n,N)=2k$ 
the strengthened Krein condition \eqref{s-krein} should be used instead of \eqref{krein}.

The second level Hermite interpolant to $h$ is
\[ f^h:=H_{\Lambda_{n,k}}(h;(\cdot+1)s_{k+1}^2) \in \Lambda_{n,k} \cup A_{n,h}, \]
where $s_{k+1}(t)$ is the polynomial in the LHS of \eqref{beti-2}. The Levenshtein bound
is improved as in Theorem \ref{lev_bound-imrovement}.

\section{Two special examples of the second level lift -- the $24$-cell and the $600$-cell on $\mathbb{S}^3$}

As our approach yields next-level necessary and sufficient conditions for existence of better bounds, it is particularly illustrative to consider the cases $(n,N)=(4,24)$ and $(n,N)=(4,120)$. Both fall in the case of subsection 3.1. Moreover, for these parameters there are prominent codes, namely the  $24$-cell and the $600$-cell, whose properties are widely investigated in the literature \cite{A1, A2, Bo, BoJr, CCEK, CK, Co1, Co2}.

\subsection{The $24$-cell.}

The $(4,24)$-codes take prominence in the literature (\cite{Boy1, OS, M}). In particular, the $24$-cell code (derived from the $D_4$ root system)
solving the kissing number problem \cite{M}, is suspected to be a maximal code, but is not
universally optimal (see \cite{CCEK}). In this case $\tau=5$, $k=3$, $\varepsilon=0$, the Levenshtein nodes and weights are approximately
$\{\alpha_1,\alpha_2,\alpha_3\}=\{-0.817352, -0.257597, 0.47495\}$,
$\{\rho_1,\rho_2,\rho_3\}=\{ 0.138436, 0.433999, 0.385897\}$. This defines
the corresponding $1/24$-quadrature rule \eqref{defin_qf}.

The first seven test functions associated with the Levenshtein $1/24$-quadrature rule \eqref{defin_qf}
are shown approximately in Table 1. Two of them, namely $Q_8^{(4, 24)}$ and $Q_9^{(4,24)}$, are negative.
\begin{table}[h!]
  \centering
  \caption{Approximations of the first seven non-zero test functions for the Levenshtein $1/24$-quadrature rule}

\vspace*{6mm}

  \label{tab:table1}
  \begin{tabular}{c|c|c|c|c|c|c}
    $Q_6^{(4,24)}$ & $Q_7^{(4,24)}$ & $Q_8^{(4,24)}$ & $Q_9^{(4,24)}$ & $Q_{10}^{(4,24)}$& $Q_{11}^{(4,24)}$& $Q_{12}^{(4,24)}$ \\
    \hline
    $0.0857$ & $0.1600$ & $-0.0239$&$-0.0204$&$0.0642$& $0.0368$ & $0.0598$\\
  \end{tabular}
\end{table}

\begin{figure}[ht]
\centering
\vspace{1mm}
\includegraphics[scale=.61]{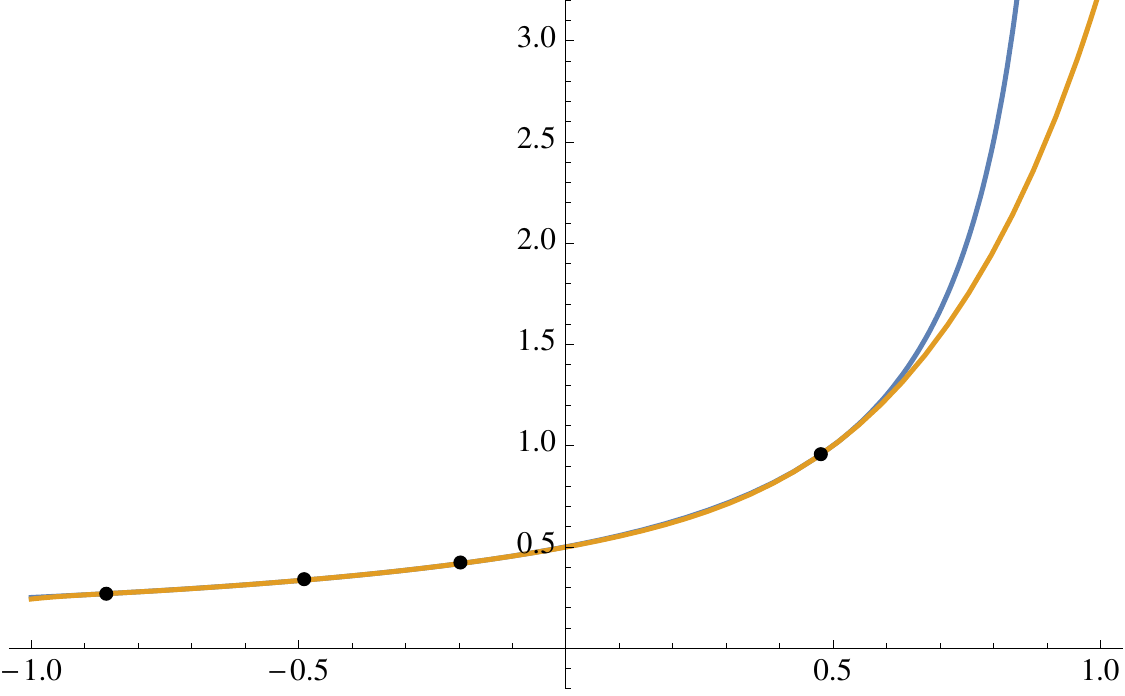}
\centering
\caption{The $(n,N)=(4,24)$ case optimal interpolant -- Newton potential}
\label{HermiteD4}
\end{figure}

The next assertion is a $(4,24)$-code version of Theorem \ref{THM_subspace}.

\begin{theorem} \label{Main} The collection of nodes and weights $\{(\beta_i, \theta_i)\}_{i=1}^{4}$
\begin{equation}
\label{beta-theta4-24}
\begin{split}
\{\beta_1,\beta_2,\beta_3, \beta_4 \}&= \{-0.86029..., -.0.48984..., -0.19572..., .0.478545...\}\\
\{ \theta_1, \theta_2, \theta_3,\theta_4 \}&=\{ 0.09960..., 0.14653..., 0.33372..., 0.37847...\},
\end{split}
 \end{equation}
define $1/24$-quadrature rule \eqref{quad-2-level} that is exact for the subspace
$\Lambda_{4,3}:=\mathcal{P}_5 \oplus {\rm span}\left(P_8^{(4)},P_9^{(4)}\right)$.
For every absolutely monotone $h$ the Hermite interpolant
$f^h(t)=H_{\Lambda_{4,3}}(h;q_4^2)$ exists and belongs to $\Lambda_{4,3} \cap  A_{4,h}$. Subsequently, $\Lambda_{4,3}$ is a ULB-space and
the following  universal lower bound (and an improvement of \eqref{bound_odd}) holds
%\begin{equation}
%\label{ULB2}
\[ \mathcal{E}_h(4,24)\ge  S_5(4,24;h)=24^2\sum_{i=1}^{4} \theta_i h(\beta_i). \]
%\end{equation}
The test functions $Q_j^{(4,24)}$ associated with the second-level $1/24$-quadrature rule \eqref{quad-2-level} with nodes and weights
\eqref{beta-theta4-24} are positive for all $j \in \mathbb{N} \setminus \{1,2,3,4,5,8,9\}$ and therefore $f^h(t)$ is LP-optimal {\rm(}see Figure 1{\rm)}.
\end{theorem}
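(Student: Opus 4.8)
The plan is to read Theorem~\ref{Main} as the instantiation of the general second level framework of Section~\ref{section-s2a2} at the parameters $n=4$, $N=24$, where $\tau(4,24)=5$, $k=3$, $\varepsilon=0$, and then to supply the one genuinely new ingredient: the verification that \emph{every} higher test function is nonnegative, which upgrades the $\Lambda_{4,3}$-optimality of $f^h$ to full LP-optimality. The first three steps below are direct applications of the machinery already developed, with the hypotheses of the relevant theorems checked by an interval-arithmetic certification of the stated numerical data~\eqref{beta-theta4-24}; the fourth step carries the real weight.

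First I would establish the quadrature. For $k=3$ the polynomial~\eqref{rk+1} reads $r_4(t)=P_4^{1,0}(t)+c_1P_3^{1,0}(t)+c_2P_2^{1,0}(t)+c_3P_1^{1,0}(t)$, and I obtain $c_1,c_2,c_3$ by solving the system~\eqref{duad-forL-1}--\eqref{duad-forq} exactly as described before Theorem~\ref{QRthm}: $c_1$ is eliminated linearly from~\eqref{duad-forq}, $c_2$ is then expressed through $c_3$, and one is left with a single degree-six equation for $c_3$. Certifying the admissible root, one confirms that $c_1,c_2,c_3\ge 0$, that the four roots $\beta_1<\dots<\beta_4$ of $r_4$ are real, simple and lie in $[-1,1)$ with the values in~\eqref{beta-theta4-24}, and that the weights from~\eqref{duad-forq2} are positive. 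Theorem~\ref{QRthm} then guarantees that $\{(\beta_i,\theta_i)\}_{i=1}^4$ is a $1/24$-quadrature rule exact on $\Lambda_{4,3}$.

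Second I would produce the admissible interpolant. Checking the nondegeneracy condition~\eqref{existence-cond} (a single $2\times2$ determinant built from $\langle t\,q_4^2,P_6^{(4)}\rangle$, $\langle q_4^2,P_6^{(4)}\rangle$, $\langle t\,q_4^2,P_7^{(4)}\rangle$, $\langle q_4^2,P_7^{(4)}\rangle$) yields via Lemma~\ref{ExistIP} the unique $f^h=H_{\Lambda_{4,3}}(h;q_4^2)$. With $c_i\ge 0$ in hand and~\eqref{pd-qk-1} verified directly for the two indices $j\in\{0,1\}$ (here $k-2=1$), Theorem~\ref{fhinanh} gives $f_i^h\ge 0$ for all $i$; confirming~\eqref{ineq-check-a1b1} for the explicit constants $A_i,B_i$ then gives $f^h\le h$ on $[-1,1)$ through Corollary~\ref{Ineq}. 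Hence $f^h\in\Lambda_{4,3}\cap A_{4,h}$, and Theorem~\ref{fh-good} at these parameters delivers both the second level bound $\mathcal{E}_h(4,24)\ge S_5(4,24;h)=24^2\sum_{i=1}^4\theta_i h(\beta_i)$ and the $\Lambda_{4,3}$-optimality of $f^h$. At this point $\Lambda_{4,3}$ is a ULB-space by Definition~\ref{def-ULB-space}.

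Third, and this is the crux, I would prove $Q_j^{(4,24)}>0$ for all $j\notin\{1,2,3,4,5,8,9\}$, so that Theorem~\ref{THM_subspace} with $\mathcal{I}=\mathbb{N}$ promotes $f^h$ to an LP-optimal polynomial. Exactness of the quadrature on $\Lambda_{4,3}$ forces $Q_j^{(4,24)}=0$ precisely for $j\in\{1,2,3,4,5,8,9\}$ and $Q_0^{(4,24)}=1$, so the content is the positivity of the infinitely many remaining functions $Q_j^{(4,24)}$, $j\in\{6,7\}\cup\{10,11,\dots\}$. The main obstacle is to dispose of all large $j$ simultaneously, and here I would use that in dimension four the Gegenbauer polynomials are renormalized Chebyshev polynomials of the second kind: writing $\beta_i=\cos\theta_i$ with $\theta_i\in(0,\pi)$,
\[ P_j^{(4)}(\cos\theta_i)=\frac{\sin\big((j+1)\theta_i\big)}{(j+1)\sin\theta_i},\qquad\text{hence}\qquad \big|P_j^{(4)}(\beta_i)\big|\le\frac{1}{(j+1)\sin\theta_i}. \]
Consequently $Q_j^{(4,24)}\ge \tfrac1{24}-\tfrac{1}{j+1}\,M$ with the explicit finite constant $M:=\sum_{i=1}^4\theta_i/\sin\theta_i$ (finite since every $\theta_i$ is bounded away from $0$ and $\pi$), so $Q_j^{(4,24)}>0$ for every $j\ge J_0:=\lceil 24M\rceil$. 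It then remains only to evaluate $Q_j^{(4,24)}=\tfrac1{24}+\sum_{i=1}^4\theta_i P_j^{(4)}(\beta_i)$ for the finitely many indices $6\le j\le J_0$ with $j\neq 8,9$ and certify positivity by interval arithmetic. Assembling the four steps yields the quadrature, the ULB, and the LP-optimality asserted in Theorem~\ref{Main}.
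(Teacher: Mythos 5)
Your proposal is correct and, for the construction of the quadrature and of the admissible interpolant, it follows exactly the paper's route: solve \eqref{duad-forL-1}--\eqref{duad-forq} for $c_1,c_2,c_3$, invoke Theorem \ref{QRthm} for the quadrature, then Lemma \ref{ExistIP}, Corollary \ref{Ineq} (via \eqref{ineq-check-a1b1}), Theorem \ref{fhinanh} and Theorem \ref{fh-good} for the interpolant, the inclusion in $\Lambda_{4,3}\cap A_{4,h}$, and the bound $S_5(4,24;h)$. Where you genuinely diverge is the last step, the positivity of $Q_j^{(4,24)}$ for all $j\notin\{1,\dots,5,8,9\}$: the paper simply defers this to the test-function machinery of \cite[Section 4.3]{BDHSS}, quoting the threshold $j_0(4,24)=15$ beyond which positivity is guaranteed by that article's general (all-dimensional) analysis, whereas you give a self-contained tail estimate exploiting the fact that in dimension $4$ one has $P_j^{(4)}(\cos\theta)=\sin((j+1)\theta)/((j+1)\sin\theta)$, hence $Q_j^{(4,24)}\ge \frac1{24}-\frac{M}{j+1}$ with $M=\sum_i\theta_i/\sin\theta_i\approx 1.13$, which kills all $j\gtrsim 28$ and leaves a finite certified check. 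Your argument is more elementary and completely internal to the paper (no appeal to the external test-function asymptotics), at the cost of being special to $n=4$ and of a somewhat larger finite verification range than the paper's $j_0=15$; the paper's citation-based route is shorter and works uniformly in $n$, but is not self-contained. Both are valid; one small point worth making explicit in your write-up is that the vanishing $Q_j^{(4,24)}=0$ for $j\in\{1,\dots,5,8,9\}$ follows from applying the quadrature to $P_j^{(4)}$ for exactly those indices spanning $\Lambda_{4,3}\ominus\mathcal{P}_0$, which you do state correctly.
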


\begin{proof}
Following the procedure in subsection 3.1 we arrive at the equations \eqref{duad-forL-1}-\eqref{duad-forq}:
\[ 6776 c_1^2 c_3-2200 c_1 c_2 - 2233 c_1 c_3 - 7425 c_2 c_3+875c_1+675c_2-1134 c_3=0, \]
\[ 195657 c_1^2 c_3 - 698775 c_1 c_2 c_3 - 63525 c_1 c_2 - 83006 c_1 c_3 +226875 c_2^2
+889350 c_3^2 +61600 c_2 +76538 c_3 +36750 = 0, \]
\[ 77c_1-275 c_2-1463 c_3+217=0, \]
respectively. We obtain
\begin{equation} \label{c's24_4} c_1 = 0.909977..., \ c_2=0.501716..., \ c_3= 0.101911... \end{equation}
Resolving the equation $q_4(t)=0$ we get distinct zeros $\beta_1,\beta_2,\beta_3, \beta_4$, all in $(-1,1)$. The weights $\theta_1, \theta_2, \theta_3,\theta_4$ are all positive, hence by Theorem \ref{QRthm} we obtain that \eqref{beta-theta4-24} defines the unique $1/24$-quadrature on $\Lambda_{4,3}$  (observe the interlacing of the first and second level quadrature nodes as described in Theorem \ref{IL}).

The conditions of Theorem \ref{implications-by-pps} are also checked directly with the interpolation set
$T= \left\{\beta_1, \beta_1, \ldots, \beta_4, \beta_4\right \}$. With the constants from \eqref{c's24_4} we have that \eqref{existence-cond} holds, which implies the existence and uniqueness of the Hermite interpolant $H_{\Lambda_{4,3}} (h;q_4^2 )$ on the subspace $\Lambda_{4,3}$. Computing $(A_1,B_1) \approx (1.2197,-1.7419)$ and $(A_2,B_2) \approx (1.5983,-2.7379)$ yields that condition \eqref{ineq-check-a1b1} holds so the inequality $H_{\Lambda_{4,3}} (h;q_4^2 ) \leq h(t)$ follows from Corollary \ref{Ineq}. We can verify the positive definiteness of the partial products directly or via Theorem \ref{small-j-pd}. Theorem \ref{fhinanh} extends this property to $H_{\Lambda_{4,3}} (h;q_4^2 ) $.Therefore,  $H_{\Lambda_{4,3}} (h;q_4^2 ) \in \Lambda_{4,3} \cap A_{4,h}$ for every absolutely monotone $h$.

The signs of the second level test functions $Q_j^{(4,24)}$, $j\in \mathbb{N} \setminus \{1,2,3,4,5,8,9\}$, can be investigated as in \cite[Section 4.3]{BDHSS}.
In this case the parameter $j_0=j_0(4,24)$ from that article is equal to $15$.
\end{proof}

The typical behaviour of the second level energy bounds with respect to the first level ULB and actual energies is well illustrated by the situation with
$h(t)$ being the Newton potential. In this case the first level energy bound is 333, the second level is $\approx 333.15$ and the best known energy 
is 334 -- the energy of the 24-cell \cite{BBCGKS}. The second level bound on maximal codes is better understood by observing that starting with 
$24-\epsilon$, where $\epsilon>0$ is very small will result in a second level bound $\mathcal{A}(4,s) \leq 23$ instead of $\mathcal{A}(4,s) \leq 24$, where $s$ is 
slightly smaller than $\beta_4$. With a few exceptions, all second level results for integer $N=\mathcal{A}(n,\beta_{k+1})$ can be treated this way.

The LP-optimality of the second level polynomial implies that the $24$-cell can not be shown to be optimal for any particular absolutely monotone potential by linear programming. In fact, it was proved in \cite{CCEK} that the $24$-cell is not universally optimal. 

We note also that the second level Levenshtein polynomial \eqref{LevPoly2} for $(n,N)=(4,24)$ (along with $(n,N)=(4,25)$) was derived first by Arestov and Babenko in \cite{AB}.

\subsection{The $600$-cell } 

In the case $\tau(4,120)=11$ the first level ULB is provided by the Hermite interpolant to the potential function at the Levenshtein nodes
$(\alpha_i)_{i=1}^6 \approx (-0.9356, -0.7266, -0.3810, 0.04406, 0.4678, 0.8073)$. Since
$Q_{12}^{(4,120)}>0$, $Q_{13}^{(4,120)}>0$, $Q_{14}^{(4,120)}<0$, and $Q_{15}^{(4,120)}<0$,
we apply the technique from Section 3.1.

To prove that $\Lambda_{4,6}=\mathcal{P}_{11} \oplus {\rm span} \left(P_{14}^{(4)}, P_{15}^{(4)}\right)$ is a ULB-space we proceed similarly to subsection 4.1. The equations  \eqref{duad-forL-1}-\eqref{duad-forq} are too long to be stated here. We find
\[ (c_1,c_2,c_3)=(0.944...,0.532..., 0.318...). \]
This leads us to a $1/120$-quadrature with $7$ internal nodes
\[ (\beta_i)_{i=1}^7=(-0.9819...,-0.7965...,-0.4765..., -0.1654..., 0.0977..., 0.4754..., 0.8079...) .\]
The corresponding weights $\{\theta_i\}_{i=1}^7$ are all positive, and by Theorem \ref{QRthm} we conclude the $1/120$-quadrature is exact in the
space $\Lambda_{4,6}$. In a similar fashion as in subsection 4.1 we verify that the Hermite interpolant to the potential
function $h$ at these nodes exists, stays underneath the potential, and is positive definite, i.e. $H_{\Lambda_{4,6}}(h;q_7^2)\in \Lambda_{4,6} \cap A_{4,h}$. Actually, the new test functions of order $12$ and $13$ are positive and hence, $H_{\Lambda_{4,6}}(h;q_7^2)$ is $\mathcal{P}_{15}$-LP-optimal.

We note that with the Newton potential the first level ULB is $10786.8$ while the second level gives $10788.2$. This is still below the
actual Newton energy $10790$ of the 600-cell which is going to be achieved by a third level lift in the next section.  

\section{The universal optimality of the 600-cell revisited - third level lift}

\subsection{Universal optimality of the $600$-cell.}
We start with a general result about
generation of $1/N$-quadrature rules by (good) codes. We define the $i$-th moment of a spherical code $C=\{x_1,x_2,\ldots,x_N\}$ by
 \begin{equation}\label{Mk0}
 M_i(C):=\sum_{j,\ell=1}^N P^{(n)}_i(\langle x_j , x_\ell\rangle ). \nonumber\end{equation}
It is well known that $M_i(C) \geq 0$ with equality if and only if
$\sum_{j=1 }^N Y(x_j)=0$ for all spherical harmonics $Y\in\mathrm{Harm}(i)$. The set
%\begin{equation}\label{I_C}
\[ \mathcal{I}(C):=\{i \in \mathbb{N}\colon M_i(C)=0\} \]
%\end{equation}
is called the {\em index set} of $C$.  Hence, $C$ is a spherical $\tau$-design if and only if  $\{1, 2, \ldots, \tau\}\subseteq \mathcal{I}(C)$.

It is straightforward to see that the identity
\begin{equation}
\label{moments-energy}
E_f(C)=f_0N^2-f(1)N+\sum_{i=1}^r f_i M_i(C)
\end{equation}
holds for any polynomial $f(t)=\sum_{i=0}^r f_i P_i^{(n)}(t)$.

A key component of the proof of Theorem \ref{600cellThm} below is that an $N$-point code $C \subset \mathbb{S}^{n-1}$
provides a $1/N$-quadrature rule  that is exact on the subspace  spanned by $P_i^{(n)}$ for $i$ in the index set  $\mathcal{I}(C)$.

\begin{theorem}\label{codeQR}
Let $C \subset \mathbb{S}^{n-1}$ be an $N$-point code and
\begin{equation}\label{alphacode}
\{-1\le \alpha_1< \cdots < \alpha_m<1\}:=\left\{\langle x,  y \rangle\colon x \neq y \in C\right\},
\end{equation}
be the set of inner products with
\begin{equation}\label{rhocode}
\rho_\ell:=\frac {\big|\{(i,j) \colon \langle x_i,x_j\rangle=\alpha_\ell\}\big|}{N^2}, \qquad \ell=1,\ldots, m,
\end{equation}
the relative frequency of occurrence of  $\alpha_\ell$. If
$$
\Lambda(C):={\rm span}\{1, P_j^{(n)}(t) \colon j \in \mathcal{I}(C)\},
$$
then
 $\{(\alpha_\ell, \rho_\ell)\}_{\ell=1}^m$  is a $1/N$-quadrature rule  exact for  $\Lambda(C)$ and for any $f\in \Lambda(C)$
 \begin{equation}\label{Efcode}
 E_f(C)= N^2\sum_{\ell=1}^{m}\rho_\ell f(\alpha_\ell)=N^2(f_0-f(1)/N).
 \end{equation}
 \end{theorem}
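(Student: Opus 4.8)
The plan is to establish the two equalities displayed in \eqref{Efcode} independently and then to read off the $1/N$-quadrature identity of Definition \ref{1/N-quadrature} as an immediate consequence. First I would prove the leftmost equality $E_f(C)=N^2\sum_{\ell=1}^{m}\rho_\ell f(\alpha_\ell)$ by a counting argument. The defining sum $E_f(C)=\sum_{x\neq y\in C}f(\langle x,y\rangle)$ ranges over the $N^2-N$ ordered pairs of distinct points, which I would partition according to the value of the inner product. Because each $\alpha_\ell$ satisfies $\alpha_\ell<1=\langle x_i,x_i\rangle$, no diagonal pair contributes, so the number of ordered pairs with $\langle x_i,x_j\rangle=\alpha_\ell$ equals exactly $N^2\rho_\ell$ by the definition \eqref{rhocode} of the relative frequency. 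Grouping the terms of $E_f(C)$ by these values then yields the claimed expression directly.

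Next I would prove the rightmost equality $E_f(C)=N^2(f_0-f(1)/N)$ via the moment-energy identity \eqref{moments-energy}. For $f\in\Lambda(C)$ the Gegenbauer expansion has $f_i\neq 0$ only for $i=0$ or $i\in\mathcal{I}(C)$. By the very definition of the index set, $M_i(C)=0$ for each $i\in\mathcal{I}(C)$, so every term of the moment sum $\sum_{i\geq 1}f_iM_i(C)$ vanishes. Identity \eqref{moments-energy} then collapses to $E_f(C)=f_0N^2-f(1)N=N^2(f_0-f(1)/N)$, as required.

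Finally, combining the two equalities gives \eqref{Efcode} in full. Dividing by $N^2$ and rearranging produces $f_0=f(1)/N+\sum_{\ell=1}^{m}\rho_\ell f(\alpha_\ell)$ for every $f\in\Lambda(C)$, which is precisely the quadrature identity in Definition \ref{1/N-quadrature}. The nodes are ordered and lie in $[-1,1)$ by their construction in \eqref{alphacode}, and each weight $\rho_\ell$ is strictly positive because the value $\alpha_\ell$ is actually attained by some pair; hence $\{(\alpha_\ell,\rho_\ell)\}_{\ell=1}^m$ is a genuine $1/N$-quadrature rule exact on $\Lambda(C)$.

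I do not anticipate a substantive obstacle, as the statement is essentially a repackaging of the moment-energy identity together with the defining property $M_i(C)=0$ on the index set. The single point deserving care is the bookkeeping in the counting step: one must use ordered pairs and observe that diagonal terms are excluded automatically by $\alpha_\ell<1$, which is exactly what makes the normalization by $N^2$ in \eqref{rhocode} consistent with the factor $N^2$ appearing in \eqref{Efcode}.
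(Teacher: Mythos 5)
Your proposal is correct and follows essentially the same route as the paper: the first equality in \eqref{Efcode} by grouping the ordered off-diagonal pairs according to their inner product (which is exactly what the paper means by ``holds from the definitions''), and the second equality from the moment--energy identity \eqref{moments-energy} together with $M_i(C)=0$ for $i\in\mathcal{I}(C)$, after which the quadrature identity is read off by rearranging. The only addition beyond the paper's terse argument is your explicit check that the weights are strictly positive, which is a harmless and correct piece of bookkeeping.
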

\begin{proof}
Suppose $f\in \Lambda(C)$ is a polynomial of the form $f=\sum_{j=0}^n f_i P_i^{(n)}$ with
$f_i \neq 0$ if and only if $i \in\mathcal{I}(C) \cup \{0\}$.  The first equality in \eqref{Efcode}
holds from the definitions \eqref{alphacode} and \eqref{rhocode} (in fact for any function
$f \in \Lambda(C)$).  The second equality in \eqref{Efcode} follows from \eqref{moments-energy}
and also shows that $\{(\alpha_\ell, \rho_\ell)\}_{\ell=1}^m$ is exact for $\Lambda(C)$.
\end{proof}
We now turn to a third level lift and apply it to derive an alternative proof of the $600$-cell $W_{120}$ universal optimality. The $16$-th degree second level test function associated with
the new nodes being negative prompts us to seek a $1/120$-quadrature with the parameters of the 600-cell:
eight nodes
$$(\gamma_i)_{i=1}^8=\left\{-1,\frac{-1-\sqrt{5}}{4},-\frac{1}{2},
  \frac{1-\sqrt{5}}{4} ,0, \frac{\sqrt{5}-1}{4},\frac{1}{2}, \frac{1+\sqrt{5}}{4}\right\}$$
occurring with corresponding relative frequencies (weights)
$$\{\nu_1,\dots,\nu_8\}=\left\{\frac{1}{120},\frac{1}{10},\frac{1}{6},\frac{1}{10}
   ,\frac{1}{4},\frac{1}{10},\frac{1}{6},\frac{1}{10}\right\}.$$

By direct computation (or see \cite[Section 3]{AY}, \cite[Theorem 5.1]{BDK}) one may verify
that the index set of $W_{120}$ contains $\{1,2, \ldots, 19\}\setminus\{12\}$ and
$\{(\gamma_i,\nu_i)\}_{i=1}^8$ form a (third level) $1/120$-quadrature rule exact
on the subspace $\mathcal{P}_{19}\cap\{P_{12}^{(4)}\}^\perp$.
The problem is then to establish that for a given $h$ absolutely monotone on $[-1,1]$ there exists some
positive definite polynomial $f$ of degree at most 19 with $f_{12}=0$ and such that $h(t)\ge f(t)$, $t\in [-1,1)$,
with equality if $t\in   \{\gamma_1,\dots,\gamma_8\}$.  Cohn and Kumar \cite{CK} consider the subspace
\[ \Lambda_3:=\mathcal{P}_{10} \oplus \mbox{span} \left(P_{14}^{(4)},P_{15}^{(4)},P_{16}^{(4)},P_{17}^{(4)}\right) \]
and show there is a unique $f\in \Lambda_3$
to the interpolation problem $f(-1)=h(-1)$, $f(\gamma_i)=h(\gamma_i)$, and
$f'(\gamma_i)=h'(\gamma_i)$ for $i=2,3, \ldots 8$ and, furthermore, that $f$ is positive definite and stays
below $h$; i.e., that $f\in A_{4,h} \cap\Lambda_3$. We find two other subspaces, namely,
\[ \Lambda_1:= \mathcal{P}_{10} \oplus \mbox{span} \left(P_{13}^{(4)},P_{14}^{(4)},P_{15}^{(4)},P_{16}^{(4)},P_{17}^{(4)}\right) \]
and
\[ \Lambda_2:=\mathcal{P}_{11} \oplus \mbox{span} \left(P_{14}^{(4)},P_{15}^{(4)},P_{16}^{(4)},P_{17}^{(4)}\right) \]
either of which yields a simpler proof of the universal optimality of $W_{120}$.

\begin{theorem}\label{600cellThm}
The 600-cell $W_{120}$ is universally optimal.  If $h$ is strictly absolutely monotone in $[-1,1]$ and $C_{120}$
is any 120-point code on $\mathbb{S}^3$ not isometric to $W_{120}$, then $$E_h(W_{120}) <E_h(C_{120}).$$
\end{theorem}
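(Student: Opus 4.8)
The plan is to exploit the third-level $1/120$-quadrature rule supported on the eight inner products $\{\gamma_i\}_{i=1}^8$ of the $600$-cell, exactly as the second-level framework of Section~3 uses the quadrature \eqref{quad-2-level}. The key structural fact, already supplied by Theorem~\ref{codeQR}, is that $W_{120}$ generates a $1/120$-quadrature rule that is exact on $\Lambda(W_{120})\supseteq \mathcal{P}_{19}\cap\{P_{12}^{(4)}\}^\perp$, with nodes $\gamma_i$ and positive weights $\nu_i$. Thus if I can produce, for each strictly absolutely monotone $h$, a positive definite polynomial $f$ of degree at most $19$ with $f_{12}=0$, satisfying $f(t)\le h(t)$ on $[-1,1)$ with equality at each $\gamma_i$ (and matching first derivatives at the interior nodes $\gamma_2,\dots,\gamma_8$), then the Delsarte--Yudin bound \eqref{glowbound2} together with the identity \eqref{moments-energy} forces $E_h(C)\ge N^2(f_0-f(1)/N)=E_f(W_{120})=E_h(W_{120})$ for every $120$-point code $C$.

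The construction of $f$ is a Hermite interpolation problem on one of the subspaces $\Lambda_1$, $\Lambda_2$, or $\Lambda_3$, and I would build $f$ via the positive-definiteness machinery of subsection~2.6. Concretely, with interpolation multiset $T=\{-1,\gamma_2,\gamma_2,\gamma_3,\gamma_3,\dots,\gamma_8,\gamma_8\}$ of cardinality $15$, I apply Lemma~\ref{lem1} to write $f=H_{\Lambda_i}(h;T)=\sum_j h[t_1,\dots,t_j]\,H_{\Lambda_i}(g_j;T)$ and then invoke Theorem~\ref{implications-by-pps}: it suffices to show that each partial-product interpolant $H_{\Lambda_i}(g_j;T)$ exists, is positive definite, and lies below $g_j$. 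Positive definiteness of the low-degree partial products follows from the Krein condition \eqref{krein}, exactly as in Theorem~\ref{small-j-pd}; the genuinely new work lies only in the top few interpolants, whose defining linear systems must be checked to be nonsingular (the analogue of the nondegeneracy condition \eqref{existence-cond}) and whose Gegenbauer coefficients must be verified nonnegative with $f_{12}=0$. The advantage of the new subspaces $\Lambda_1=\mathcal{P}_{10}\oplus\mathrm{span}(P_{13}^{(4)},\dots,P_{17}^{(4)})$ and $\Lambda_2=\mathcal{P}_{11}\oplus\mathrm{span}(P_{14}^{(4)},\dots,P_{17}^{(4)})$ over Cohn--Kumar's $\Lambda_3$ is that the quadrature's exactness on $\mathcal{P}_{19}\cap\{P_{12}^{(4)}\}^\perp$ guarantees the test functions vanish or stay nonnegative in the relevant range, streamlining the verification.

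For the strict inequality and the uniqueness-of-minimizer statement, I would track the equality cases. If $C_{120}$ attains $E_h(C_{120})=E_h(W_{120})$, then every inequality in the chain must be tight: the term $\sum_i f_i M_i(C_{120})$ in \eqref{moments-energy} must vanish, forcing $M_i(C_{120})=0$ for all $i$ with $f_i>0$, and $C_{120}$ must have all its inner products among the zeros $\{\gamma_i\}$ of $h-f$. Strict absolute monotonicity of $h$ makes the relevant divided differences $h[t_1,\dots,t_j]$ strictly positive, hence $f_i>0$ for a full set of indices $i\in\{0,1,\dots,19\}\setminus\{12\}$, so $C_{120}$ is a spherical design of high strength supported on the prescribed inner-product set. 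A rigidity argument then identifies $C_{120}$ with $W_{120}$ up to isometry, yielding the strict inequality for non-isometric codes.

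The main obstacle I anticipate is twofold. First, and most delicately, is verifying nonnegativity of \emph{all} Gegenbauer coefficients of $f$ simultaneously with the hard constraint $f_{12}=0$: unlike the degree-counting arguments that handle $f_i$ for $i\le 2k-1$ in the second-level case, here the vanishing of the $12$th coefficient is not automatic and couples the top interpolants in a way that must be checked by hand (or by interval arithmetic) for each of $\Lambda_1,\Lambda_2,\Lambda_3$. Second, the final rigidity step---upgrading ``$C_{120}$ is a design with inner products in $\{\gamma_i\}$'' to ``$C_{120}$ is isometric to $W_{120}$''---requires a separate combinatorial/geometric classification of such configurations on $\mathbb{S}^3$, which is where the genuine uniqueness content resides rather than in the linear-programming estimate itself.
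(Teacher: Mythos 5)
Your overall architecture is the paper's: use the code-generated $1/120$-quadrature of Theorem~\ref{codeQR} (exact on $\mathcal{P}_{19}\cap\{P_{12}^{(4)}\}^{\perp}$), build a Hermite interpolant to $h$ in one of the subspaces $\Lambda_1,\Lambda_2$ via the partial-product decomposition of Lemma~\ref{lem1} and Theorem~\ref{implications-by-pps}, verify by exact computation that each partial-product interpolant is positive semi-definite and lies below the corresponding $g_j$, and conclude via Theorem~\ref{THM_subspace}. However, there is a concrete misstep in your interpolation setup that happens to sit exactly where the paper's new idea lives. You take the multiset $T=\{-1,\gamma_2,\gamma_2,\dots,\gamma_8,\gamma_8\}$ of cardinality $15$, i.e.\ with $-1$ as a \emph{simple} node. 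That is the Cohn--Kumar problem, matched to $\dim\Lambda_3=15$. But $\dim\Lambda_1=\dim\Lambda_2=16$, so on those spaces your $15$ conditions leave a one-parameter family and $H_{\Lambda_i}(g_j;T)$ is not defined; the whole partial-product machinery needs a unique interpolant. The paper instead doubles the node at $-1$, taking $T=\{\gamma_1,\gamma_1,\dots,\gamma_8,\gamma_8\}$ (so $f'(-1)=h'(-1)$ is imposed as well), which matches the dimension and, crucially, makes the correction terms \emph{linear}: $p_j(\Lambda_i,T;t)=g_j(t)+(A_j^i+B_j^i t)\,g(t)$ with $g(t)=\prod_{i=1}^{8}(t-\gamma_i)^2$, and the sign condition $B_j^i\ge 0$, $A_j^i+B_j^i\le 0$ is then checkable and holds.

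This is not a cosmetic point: the paper shows explicitly that with the simple node at $-1$ (on $\Lambda_3$, where the dimensions do match) the method \emph{fails} --- the quadratic correction for $j=14$ is strictly positive on an interval $[-1,t^*)$ with $t^*\approx -0.9996$, which is why Cohn and Woo had to pass to a cubic correction of degree $18$. So carrying out your plan on $\Lambda_3$ runs into a known obstruction, and carrying it out on $\Lambda_1,\Lambda_2$ is ill-posed until you add the extra condition at $-1$. Two smaller remarks: the positive definiteness of the low-degree partial products is verified in the paper by direct exact computation of Gegenbauer coefficients, not by transplanting the Krein/orthogonal-polynomial argument of Theorem~\ref{small-j-pd} (which was set up for the second-level quadrature); and your closing rigidity argument for the strict inequality is only a sketch --- to be fair, the paper's own proof also stops at $E_h(W_{120})=\mathcal{E}_h(4,120)$ and does not spell out the uniqueness step, so you should not treat that part as settled by either account.
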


\begin{proof}
Let   $T= \left\{\gamma_1, \gamma_1, \ldots, \gamma_8, \gamma_8\right \}=\{t_0,t_1,\ldots,t_{15}\}$ and $g(t)=\prod_{i=1}^8(t-\gamma_i)^2$.
For $i=1$ or $2$, we  define 
\begin{equation} \label{PTjLamdef}
p_j(\Lambda_i,T;t):=g_j (t) +(A_j^i+B_j^it)g(t), \qquad j=0,1, \ldots, 15, 
\end{equation}
where $g_j(t)$ is defined in \eqref{partial-1} and $A_j^i$ and $B_j^i$ are the unique values so that $p_j(\Lambda_i,T;\cdot)$ is orthogonal to $P_{11}^{(4)}$
and $P_{12}^{(4)}$ if $i=1$ and to $P_{12}^{(4)}$ and $P_{13}^{(4)}$ if $i=2$.
Note that  $A_j^i=B_j^i=0$ for $j=0, \ldots, 10$ in the case $i=1$ and for $j=0, \ldots, 11$ in the case $i=2$. Observe that we have $p_j(\Lambda_i,T;t)=H_{\Lambda_i}(g_j;g)$.

By explicit computation aided by a computer algebra system (CAS), we compute the nonzero values of
$A_j^i$ and $B_j^i$  shown in Table~\ref{ABtable600}.  Observing (see \eqref{ineq-check-a1b1})
that $B_j^i\ge 0$ and $A_j^i+B_j^i\le 0$
for all $0\le j\le 15$ and $i=1,2$ shows that $(A_j^i+B_j^it)\le 0$ for $t\in [-1,1]$ and so
\begin{equation} \label{pjLam<pj}
p_j(\Lambda_i,T;t)\le g_j (t)  \qquad j=0,1, \ldots, 15,
\end{equation}
for $t\in [-1,1]$.

\begin{table}
\caption{The nonzero values of $A_j^k$ and $B_j^k$ in \eqref{PTjLamdef}.}
\label{ABtable600}
\centering
\begin{tabular}{|c|c|c|c|c|}
\hline
\rule{0pt}{.2in} $k $ & $ A_k^{1} $ & $ B_k^{1}$ & $ A_k^{2} $ & $ B_k^{2}$ \\[.05in]
\hline
\rule{0pt}{.25in} $11 $ & $ -\frac{128}{13} $ & $ \frac{352}{39} $ & $ 0 $ & $ 0$ \\
\rule{0pt}{.25in}$ 12 $ & $ -\frac{4\left(87+16 \sqrt{5}\right)}{13}  $ & $
   \frac{16\left(59+11 \sqrt{5}\right) }{39} $ & $ -\frac{220}{29}
   $ & $ \frac{192}{29}$ \\
\rule{0pt}{.25in}$ 13 $ & $ -\frac{2\left(210+79 \sqrt{5}\right)}{13}  $ & $
   \frac{4\left(279+107 \sqrt{5}\right)}{39}  $ & $ -\frac{2\left(234+55 \sqrt{5}\right)}{29}
   $ & $ \frac{16\left(25+6
   \sqrt{5}\right)}{29}  $\\
\rule{0pt}{.25in} $14 $ & $ \frac{ -725-301 \sqrt{5} }{26}  $ & $
   \frac{4\left(235+99 \sqrt{5}\right) }{39} $ & $ \frac{-965-413 \sqrt{5}}{58}
   $ & $ \frac{16\left(25+11
   \sqrt{5}\right)}{29}  $\\
\rule{0pt}{.25in} $15 $ & $ \frac{-471-185 \sqrt{5}}{52}   $ & $
   \frac{271+115 \sqrt{5}}{39}   $ & $ \frac{-983-345 \sqrt{5}}{116}
  $ & $ \frac{2\left(93+35
   \sqrt{5}\right)}{29}  $\\[.07in]
     \hline
\end{tabular}
\end{table}

Exact CAS computations show  that the coefficients  in the Gegenbauer expansion of
$p_j(\Lambda_i,T;t)$ are non-negative and therefore $p_j(\Lambda_i,T;t)$ is positive
semi-definite for $j=0,1, \ldots, 15$ and $i=1,2$.    Let $h$ be absolutely monotone on $[-1,1)$
and, for $i=1$ or $2$, let
%\begin{equation}\label{HTf}
\[ H_{\Lambda_i}(h;g) :=\sum_{j=0}^{15}h[t_0,t_1,\ldots, t_j]p_j(\Lambda_i,T;t). \]
%\end{equation}
By \eqref{pjLam<pj} and the non-negativity of the divided differences $h[t_0,t_1,\ldots, t_j]$, we have $H_{\Lambda_i}(h;g) \le H(h;g)(t)$ for $t\in[-1,1]$.
Additionally, we may use the remainder formula for the Hermite interpolation to write
$$
h(t)-H(h;g)(t)=h[\gamma_1,\gamma_1,\ldots, \gamma_8,\gamma_8,t]\prod_{i=1}^8(t-\gamma_i)^2\ge 0.
$$
showing $H(h;g)(t)\le h(t)$ for $t\in[-1,1)$.   We have therefore established that
$H_{\Lambda_i}(h;g) \in A_{4,h} \cap\Lambda_{i}$, $i=1,2$, and since
$H_{\Lambda_i}(h;g)(\gamma_j)=H(h;g)(\gamma_j)=h(\gamma_j)$ for $j=1,\ldots,8$ and $i=1,2$,
it follows from Theorem~\ref{THM_subspace}, Theorem~\ref{codeQR}, and the above discussion
concerning $H_{\Lambda_i}(h;g)$ that $E_h(W_{120})=\mathcal{E}_h(4,120)$ and therefore that $W_{120}$ is universally optimal.
\end{proof}

\begin{figure}[htbp]
\centering
\raisebox{.2in}{\includegraphics[width=2.7in]{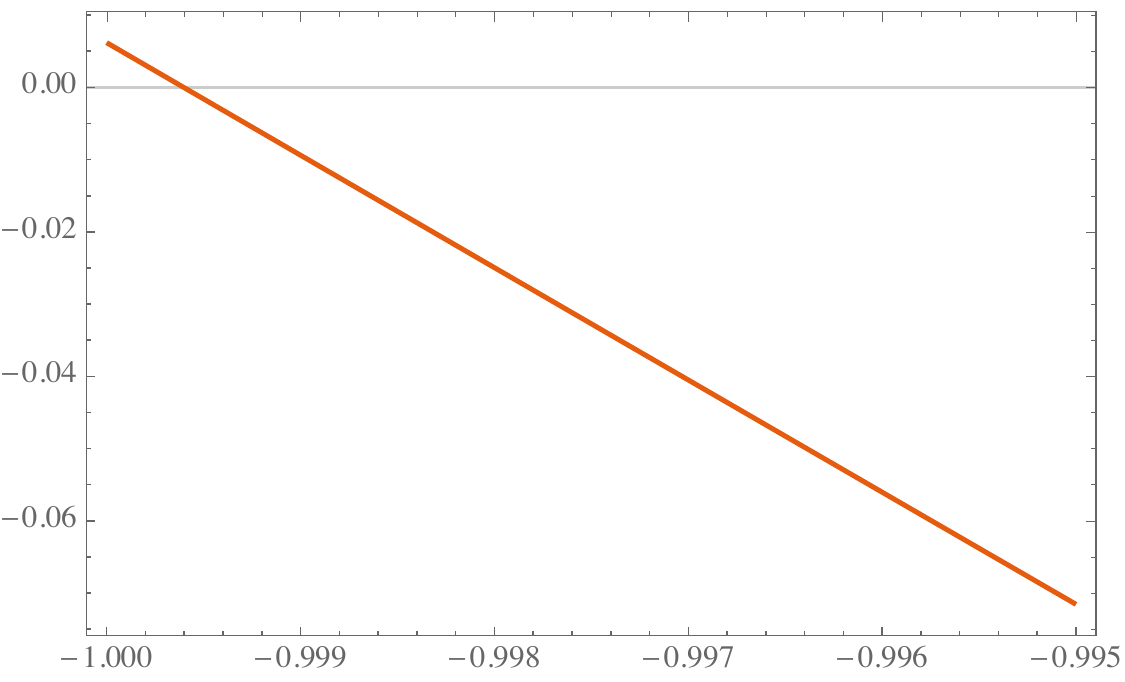}}\hspace{-.1in} \raisebox{1.43in}{\includegraphics[width=.6in]{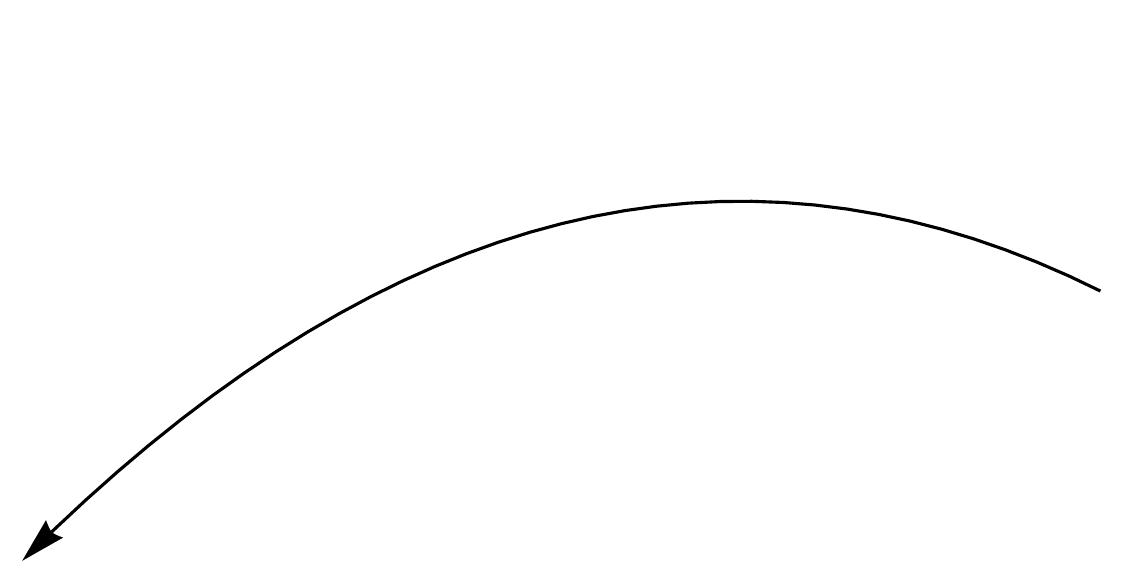}}\hspace{-.1in}
\includegraphics[width=2.7in]{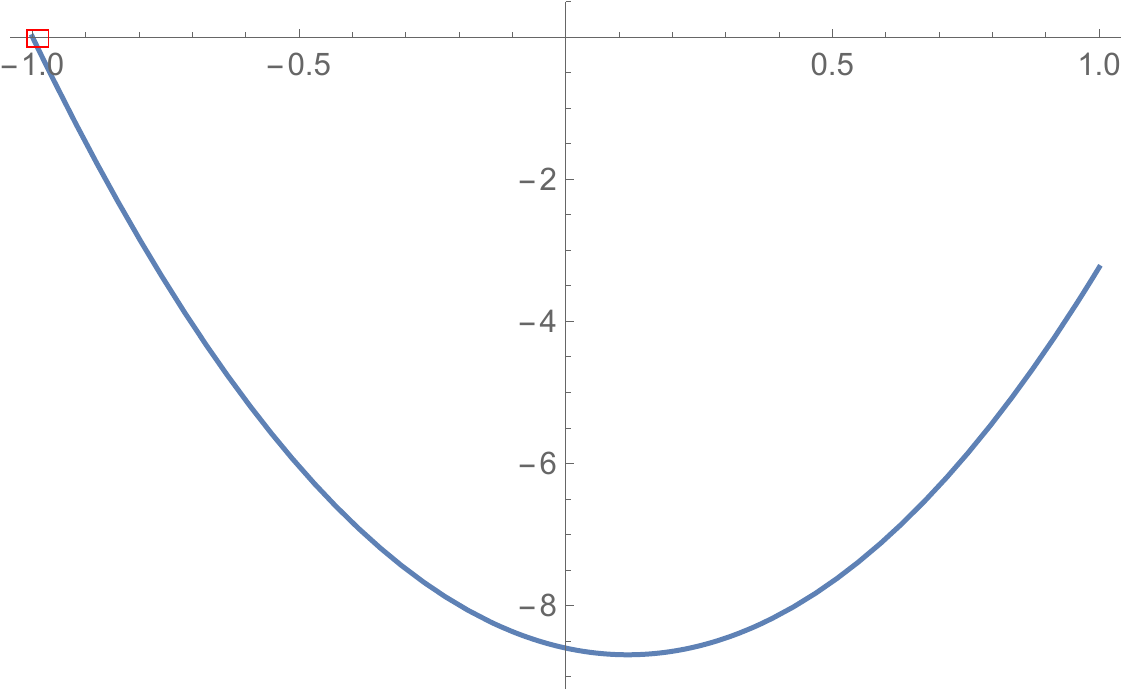}
\caption{Right: plot of $A_{14}^0+B_{14}^0t+C_{14}^0t^2$ on $[-1,1]$.  Left: Blowup of graph for $t\in [-1,-.995]$.}
\label{CKFig}
\end{figure}

It is interesting to consider why this approach does not work for $\Lambda_3$.   In this case, the interpolation
set $T_0=\left\{\gamma_1, \gamma_2,\gamma_2, \ldots, \gamma_8, \gamma_8\right \}$  has $\gamma_1=-1$ with multiplicity 1
and so the regular interpolant is a polynomial of degree at most 14.  Since there are 3 orthogonality conditions, we consider
\begin{equation} \label{PTjLamdef3}
p_j(\Lambda_3,T_0;t):=p_j(T_0;t)+(A_j^0+B_j^0t+C_j^0t^2)(t+1)\prod_{i=2}^8(t-\gamma_i)^2, \qquad j=0, 1, \ldots, 14,
\end{equation}
and again compute $A_j^0$, $B_j^0$, and $C_j^0$ using exact CAS computations. In this case we verify that
$p_j(\Lambda_3,T_0;t)$ for all $j=0,\ldots,14$ are positive semi-definite and that $p_j(\Lambda_3,T_0;t)\le p_j(T_0;t)$
for $j=0, \ldots, 13$.  For $j=14$ we have $A_{14}^0=\frac{ -27-11 \sqrt{5} }{6}$, $B_{14}^0=\frac{  -27-\sqrt{5} }{18}$,
and  $C_{14}^0=\frac{4\left(3+\sqrt{5}\right)}{3}$ and the quadratic term
$ A_{14}^0+B_{14}^0t+C_{14}^0t^2$ has a zero at $t=t^*=\frac{1}{48} \left(19-6 \sqrt{5}-\sqrt{2413+204
   \sqrt{5}}\right)\approx -.999603$ and $p_{14}(\Lambda_3,T_0;t)>0$ for $t\in [-1,t^*)$ as shown in Figure~\ref{CKFig}.
Cohn and Woo \cite{CW} proceed  by replacing the quadratic term in \eqref{PTjLamdef3} for $j=14$ with a cubic
term resulting in a degree eighteen generalized partial product that is positive semi-definite and negative on $[-1,1]$.

\subsection{Third level lift -- quadrature nodes of the $600$-cell.} It is noteworthy to say that as the $600$-cell is almost a $19$-design, its inner products $\{\gamma_i\}$ and relative frequencies $\{\nu_i\}$ form a (third level) $1/120$-quadrature rule exact on the subspace $\mathcal{P}_{19}\cap\{P_{12}^{(4)}\}^\perp$. Therefore, unlike the second level lift, we din't need to perform the necessary work to determine the quadrature nodes in subsection 5.1. Below we sketch briefly an adaptation of the computational framework described in subsection 3.1 to determine the quadrature nodes for the third level lift in the $(n,N)=(4,120)$ case. Namely, we seek the eight nodes $\{ \delta_i \}_{i=1}^8$ as roots of a polynomial (compare with \eqref{rk+1})
\[ q_8 (t) = P_{8}^{1,0}(t)+c_1P_{7}^{1,0}(t)+c_2P_{6}^{1,0}(t)+c_3P_{5}^{1,0}(t)+c_4P_{4}^{1,0}(t)+c_5P_{3}^{1,0}(t). \]
%\end{equation}
Similar to \eqref{Poly}, we require that
\[ L(t)=(1-t)q_{8}(t)\left(d_0P_{8}^{1,0}(t)+d_1P_{7}^{1,0}(t)+d_2P_{6}^{1,0}(t)+d_3P_{5}^{1,0}(t)+d_4P_{4}^{1,0}(t)+d_5P_{3}^{1,0}(t)\right) \in \Lambda_2, \]
where we use the orthogonality conditions
\[\langle L, P_{12}^{(n)} \rangle = \langle L, P_{13}^{(n)} \rangle =0\]
to express
\[d_4=d_4(d_0,d_1,d_2,d_3), \quad d_5=d_5(d_0,d_1,d_2,d_3).\]

Substituting $(d_0,d_1,d_2,d_3)$ with $(1,0,0,0)$, $(0,1,0,0)$, $(0,0,1,0)$, and $(0,0,0,1)$, respectively, we obtain four equations of degree at most $3$ in the variables $c_0, c_1, c_2, c_4, c_5$. Together with the linear equation (see \eqref{duad-forq})
\[
I_{8}+c_1I_7+c_2I_{6}+c_3I_{5}+c_4I_{4}+c_5I_{3}=\frac{1+c_1+c_2+c_3+c_4+c_5}{120},
\]
we find
\[ c_1=0.8947\dots, c_2=0.7894.\dots, c_3=0.6842\dots, c_4=0.2315\dots, c_5=0.1894\dots.\]
As expected, the roots $\{\delta_i\}_{i=1}^8$ of $q_8 (t)$ coincide with the inner products of the $600$-cell.

If we require that $L(t)\in \Lambda_1$, in which case the orthogonality conditions used are
\[\langle L, P_{11}^{(n)} \rangle = \langle L, P_{12}^{(n)} \rangle =0,\]
we arrive again at the same nodes. 
%The corresponding weights are positive again and the Hermite interpolant 

\subsection{Characterization of LP-optimal polynomials of minimal degree.} We conclude this section with a characterization of all polynomials
$f\in \mathcal{P}_{17}$ that are LP-optimal for $(n,N)=(4,120)$ and a given $h$ absolutely monotone on $[-1,1]$.
For $i=1,2$,  let $f_{h,\Lambda_i}(t)$ denote the LP-optimal polynomial in $\Lambda_i$ constructed  above  and let
$f_{h,\Lambda_3}(t)$ be the  LP-optimal polynomial whose existence is proved in  \cite{CK}.

\begin{figure}[htbp]
\centering
\includegraphics[width=3.in]{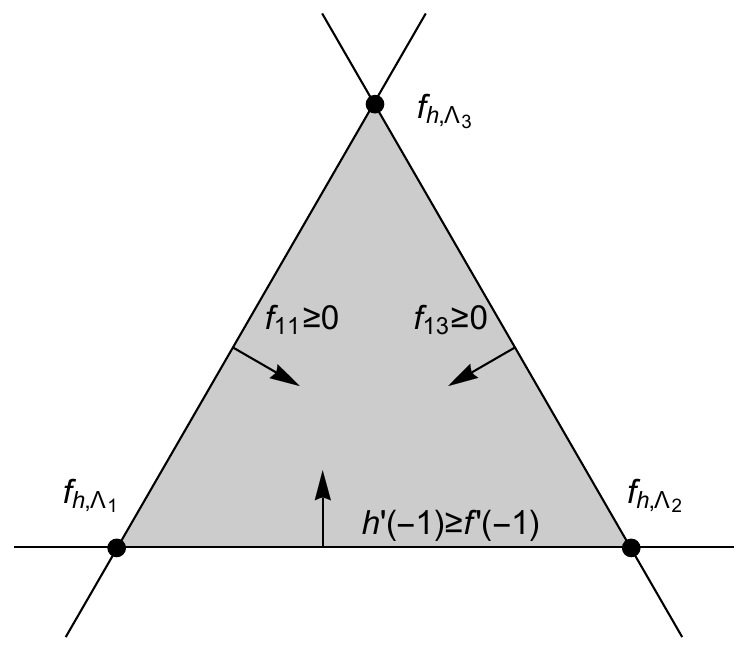}
\caption{Set of LP-optimal polynomials for 600-cell and given $h$ as in Theorem~\ref{TriThm}.}
\label{fig:s1}
\end{figure}

\begin{theorem}\label{TriThm}
Let $h$ be an absolutely monotone function on $[-1,1)$. Then $f\in \mathcal{P}_{17}$ is LP-optimal for $(n,N)=(4,120)$
if and only if $f$ is a convex combination of $f_{h,\Lambda_3}$, $f_{h,\Lambda_1}$, and $f_{h,\Lambda_2}$.
\end{theorem}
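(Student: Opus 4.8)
The plan is to turn LP-optimality into an interpolation condition and then to coordinatize the resulting solution set. Since (as recorded before Theorem~\ref{codeQR}) the index set of $W_{120}$ contains $\{0,1,\dots,19\}\setminus\{12\}$, for every $f=\sum_i f_iP_i^{(4)}\in\mathcal P_{17}$ the energy identity \eqref{moments-energy} collapses to $E_f(W_{120})=f_0N^2-f(1)N+f_{12}M_{12}(W_{120})$, where $M_{12}(W_{120})>0$. First I would show that an LP-optimal $f\in\mathcal P_{17}$ (i.e. $f\in\mathcal P_{17}\cap A_{4,h}$ with $f_0N^2-f(1)N=\mathcal E_h(4,120)$) must have $f_{12}=0$: because $f\le h$ on $[-1,1)$ we have $E_f(W_{120})\le E_h(W_{120})=\mathcal E_h(4,120)$ by Theorem~\ref{600cellThm}, so $f_{12}M_{12}(W_{120})\le 0$, and $f_{12}\ge 0$ by positive definiteness forces $f_{12}=0$. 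Equality $E_f(W_{120})=E_h(W_{120})$ then makes $\sum_{x\ne y}(h-f)(\langle x,y\rangle)=0$, so $f(\gamma_i)=h(\gamma_i)$ for all $i$, and interior contact of $f\le h$ yields $f'(\gamma_i)=h'(\gamma_i)$ for $i=2,\dots,8$; that is, $f$ agrees with $h$ on the multiset $T_0$. The converse is immediate from the same identity. Hence the LP-optimal set equals $V\cap A_{4,h}$, where $V:=\{f\in\mathcal P_{17}:f_{12}=0,\ f|_{T_0}=h|_{T_0}\}$.

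Next I would prove that $V$ is a two-dimensional affine space with affine coordinates $(f_{11},f_{13})$. A count gives $\dim V=17-15=2$ (it is nonempty since $f_{h,\Lambda_3}\in V$), and the map $f\mapsto(f_{11},f_{13})$ is an affine isomorphism onto $\mathbb R^2$: its linear part has kernel consisting of elements of $\Lambda_3$ that vanish on $T_0$, hence $\{0\}$ by uniqueness of $\Lambda_3$-interpolation. In these coordinates the three distinguished polynomials sit at $f_{h,\Lambda_3}=(0,0)$, $f_{h,\Lambda_1}=(0,a)$ and $f_{h,\Lambda_2}=(b,0)$, because $\Lambda_3$ kills $P_{11},P_{13}$ while $\Lambda_1,\Lambda_2$ kill $P_{11}$, resp. $P_{13}$; all three lie in $A_{4,h}$ by Theorem~\ref{600cellThm} and \cite{CK}.

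For the \emph{if} direction I would use convexity. The class $A_{4,h}$ is the intersection of the half-spaces $\{f:f(t)\le h(t)\}$, $t\in[-1,1)$, with $\{f:f_i\ge 0\}$, hence convex; intersecting with the affine $V$ preserves convexity. Since $V\cap A_{4,h}$ contains the three vertices it contains their convex hull, so every convex combination of $f_{h,\Lambda_1},f_{h,\Lambda_2},f_{h,\Lambda_3}$ is LP-optimal. For \emph{only if}, I would isolate the three inequalities that any $f\in V\cap A_{4,h}$ must satisfy: positive definiteness gives $f_{11}\ge 0$ and $f_{13}\ge 0$, while $f\le h$ together with $f(-1)=h(-1)$ gives $\phi(f):=h'(-1)-f'(-1)\ge 0$. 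The functional $\phi$ is affine on $V$ and vanishes at $f_{h,\Lambda_1}$ and $f_{h,\Lambda_2}$ (both interpolate $h'$ at $-1$), so $\phi(f)=\phi_0\,(1-f_{11}/b-f_{13}/a)$ with $\phi_0=h'(-1)-f_{h,\Lambda_3}'(-1)$. If $\phi_0>0$, the three necessary inequalities become exactly $f_{11}\ge 0$, $f_{13}\ge 0$, $f_{11}/b+f_{13}/a\le 1$, which cut out precisely $\mathrm{conv}\{(0,0),(0,a),(b,0)\}$; hence the LP-optimal $f$ is a convex combination of the three vertices.

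The main obstacle is the single nondegeneracy fact $\phi_0>0$, i.e. that $h-f_{h,\Lambda_3}$ has a simple zero at $t=-1$. It automatically yields $a,b>0$ (were $a=0$ then $f_{h,\Lambda_1}=f_{h,\Lambda_3}$ and $\phi_0=\phi(f_{h,\Lambda_1})=0$, a contradiction), so the triangle is genuinely two-dimensional. I would establish $\phi_0>0$ by the explicit CAS evaluation of $f_{h,\Lambda_3}$ underlying Table~\ref{ABtable600}, or structurally from the fact that $\Lambda_3$ imposes only first-order contact at $-1$; indeed, the failure of the $\Lambda_3$ partial-product inequality to second order at $-1$ (the quadratic $A_{14}^0+B_{14}^0t+C_{14}^0t^2$ is positive just to the right of $-1$) is exactly the signature of this simple zero. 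Once $\phi_0>0$ is secured, the feasible triangle is bounded, and because the \emph{if} direction already shows the whole triangle is feasible, no further constraint ($f_i\ge 0$ for $i\ne 11,13$, or $f\le h$ at interior points) can be active in its interior; therefore $V\cap A_{4,h}$ equals the triangle, which completes the characterization.
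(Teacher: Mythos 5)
Your strategy is essentially the paper's own: reduce LP-optimality to the conditions $f_{12}=0$, $f|_{T_0}=h|_{T_0}$, $f'(-1)\le h'(-1)$, $f_j\ge 0$ (your derivation from the moment identity \eqref{moments-energy} and $M_{12}(W_{120})>0$ is a correct and more explicit version of what the paper attributes to Theorem~\ref{600cellThm}), parametrize the resulting two-dimensional affine family, cut it by three linear constraints whose boundary lines meet pairwise at $f_{h,\Lambda_1}$, $f_{h,\Lambda_2}$, $f_{h,\Lambda_3}$, and invoke convexity for the converse. The only real difference is cosmetic: the paper coordinatizes by the coefficients $(B,C)$ of the quadratic factor in \eqref{fBC} and computes the three functionals $\lambda_{1,2},\lambda_{1,3},\lambda_{2,3}$ exactly, while you use $(f_{11},f_{13})$ and argue structurally. (Minor: the dimension count should be $\dim\mathcal{P}_{17}-|T_0|-1=18-15-1=2$; your kernel argument via uniqueness of $\Lambda_3$-interpolation makes it rigorous regardless.)

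The genuine gap is the nondegeneracy claim $\phi_0=h'(-1)-f_{h,\Lambda_3}'(-1)>0$, which you treat as a fact to be established. It is an $h$-dependent quantity and is false for some absolutely monotone $h$: for $h(t)=2+t$ one has $f_{h,\Lambda_1}=f_{h,\Lambda_2}=f_{h,\Lambda_3}=h$ and $\phi_0=0$. In particular it cannot be obtained from the CAS data in Table~\ref{ABtable600} or from the behaviour of the degree-$14$ partial product near $-1$, since those are independent of $h$; all one gets for free is $\phi_0\ge 0$, with $\phi_0=0$ precisely when the three vertices coincide (equivalently $\alpha=(f_{h,\Lambda_1})_{13}=0$, the case treated in the paper's closing remark). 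In that degenerate case your argument loses its handle on $\phi$: it is an affine functional known only to vanish at the single point $(0,0)$, and three half-planes through a common point need not intersect in that point alone — ruling out extra optimal polynomials there requires the actual directions of the three constraints, i.e.\ exactly the explicit coefficients $\lambda_{1,2},\lambda_{1,3},\lambda_{2,3}$ the paper computes. So you must either add that computation or split into the cases $\alpha>0$ (where your triangle argument is complete) and $\alpha=0$ (where uniqueness must be verified from the explicit normals).
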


\begin{remark} In his proof that the $600$-cell $W_{120}$ is a maximal code \cite{A1}, Andreev utilizes the counterpart of Figure 4 triangle's (polynomial) vertex $f_{h,\Lambda_2}$ that corresponds to the maximal cardinality problem \eqref{ANS} and notes that the counterpart of the whole segment $h'(-1)=f'(-1)$ works. In a subsequent article \cite[Theorem 2]{A2} Andreev also proves that $W_{120}$ minimizes the Newton energy among all configurations of $120$ points by using a polynomial that lies in the interior of the side of the triangle (with $h(t)=1/(1-t)$) determined by the condition $h'(-1)=f'(-1)$.
\end{remark}

\begin{proof}
  Theorem~\ref{600cellThm}  implies that any LP-optimal $f$ in $ \mathcal{P}_{17}$, $(n,N)=(4,120)$, must satisfy the necessary conditions:
\begin{enumerate}
\item[\rm (a)]  $f_{12}=0$ and $f_j \ge 0,$  $j=0, 1,\ldots, 17$,
\item[\rm (b)] $f(\gamma_j)=h(\gamma_j)$,   $j=1, 2, \ldots, 8$,
\item[\rm (c)]  $f'(-1)\le h'(-1)$ and $f'(\gamma_j)=h'(\gamma_j)$, \,  $j=2, 3, \ldots, 8.$
\end{enumerate}
Suppose $f,g\in \mathcal{P}_{17}$ both satisfy conditions (a), (b), and (c). The equality constraints in (b) and (c) imply
$$
f(t)-g(t)=(t+1)\prod_{j=2}^8(t-\gamma_j)^2(A+Bt+Ct^2),
$$
for some constants $A$, $B$, $C$.  Further, from $f_{12}=g_{12}=0$, a direct  computation gives
$5A=-6(B+C)$   and so
\begin{equation}\label{fBC}
f(t)-g(t)=(t+1)\prod_{j=2}^8(t-\gamma_i)^2(B(t-6/5)+C(t^2-6/5)).
\end{equation}
Then, exact computations give
\begin{equation*}\begin{split}
f'(-1)-g'(-1)&=\lambda_{1,2}(B,C):=-\frac{15 (113 B+83 C)}{4096},\\
\quad (f-g)_{11}&=\lambda_{1,3}(B,C):=-\frac{\pi  (12 B+7 C)}{2621440}, \\
\quad (f-g)_{13}&=\lambda_{2,3}(B,C):=\frac{\pi  (11 C-24   B)}{18350080}.
\end{split}\end{equation*}
Let $g:=f_{h,\Lambda_1}$ and $f_{B,C}$ denote the polynomial defined by \eqref{fBC} for given $B$ and $C$. Also, let $\alpha:=g_{13}=(f_{h,\Lambda_1})_{13}$.  Since $g'(-1)=h'(-1)$ and $g_{11}=0$, it follows that if
 $f_{B,C}$ is optimal, then $(B,C)$ must lie in the intersection $\Delta$ of the half-spaces  $\lambda_{1,2}(B,C)\le 0$, $\lambda_{1,3}(B,C)\ge 0$, and $\lambda_{2,3}(B,C)\ge -\alpha$.  Let $L_{1,2}$, $L_{1,3}$, and $L_{2,3}$ denote the lines
 $\lambda_{1,2}(B,C)=0$, $\lambda_{1,3}(B,C)=0$, and $\lambda_{2,3}(B,C)=-\alpha$, respectively.  Let $(B_1,C_1)$, $(B_2,C_2)$, $(B_3,C_3)$ be the intersection points $\{(B_1,C_1)\}=\{(0,0)\}=L_{1,2}\cap L_{1,3}$, $\{(B_2,C_2)\}=L_{1,2}\cap L_{2,3}$, and $\{(B_3,C_3)\}=L_{1,3}\cap L_{2,3}$ and observe   that $f_{B_k,C_k}=f_{h,\Lambda_k}$.    Then $f_{B,C}$ is a convex combination of $f_{h,\Lambda_3}$, $f_{h,\Lambda_1}$, and $f_{h,\Lambda_2}$ if and only if $(B,C)\in \Delta$.   Since any LP-optimal polynomial $f\in \mathcal{P}_{17}$, $(n,N)=(4,120)$, is of the form $f=f_{B,C}$, the proof is completed.
\end{proof}

 We remark that if $\alpha=0$ in the above proof then $\Delta=\{(0,0)\}$ and   $f_{h,\Lambda_1}=f_{h,\Lambda_2}=f_{h,\Lambda_3}$
is the only LP-optimal polynomial in $ \mathcal{P}_{17}$, $(n,N)=(4,120)$.
Otherwise, if $\alpha>0$ then $\Delta$ forms a non-degenerate triangle.

\section{Numerical illustration of the second level ULB}

In this section we give examples of second level bounds which illustrate well the typical behaviour in the framework of Section 3.

Figure 4 illustrates the first and second level ULB for $\mathcal{E}_h(12,N)$ (on the left) for Newton potential, and the first and second level bounds for $\mathcal{A}(12,s)$ (on the right) in the particular case $(n,\tau)=(12,9)$. 

\begin{figure}[ht]
\centering
%\vspace{1mm}
\includegraphics[scale=.69]{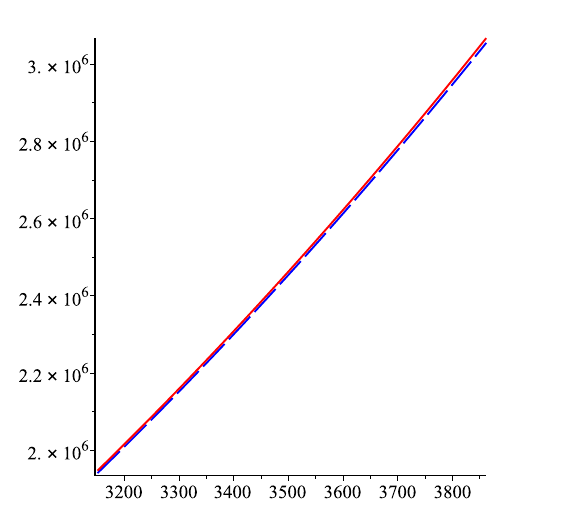}
\includegraphics[scale=.69]{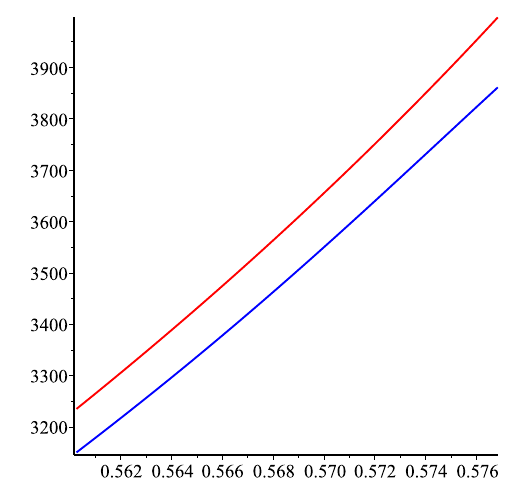}
\caption{First and second level ULB (for Newton potential) and Levenshtein bounds for $(n,\tau)=(12,9)$}
\label{12-9-ulb}
\end{figure}

%\medskip

Even though the two ULB bounds look very close, as numerical comparison in \cite[Table 1] {BDHSS} reveals, the actual energy and the first level ULB are very close to start with. The juxtaposition of the Levenshtein bound and its second level lift illustrates a significant change. In fact the parameters $n=12$ and
$\tau=9$ are chosen to illustrate the improvement on $\mathcal{A}(n,s)$ since in small dimensions it may look insignificant. 

\medskip

In Tables \ref{AC} and \ref{AD} we show how different situations arise in dimension $n=8$ and for $\tau(8,N) \in \{5,6,7,8\}$ and $n=9$ and for $\tau(9,N) \in \{5,6,7,8\}$ respectively. No test functions are 
negative for $\tau(8,N) \leq 4$ and $\tau(9,N) \leq 4$; i.e., the first level bounds are LP-optimal in these cases. In the columns ULB1-LP and ULB2-LP we show the
intervals where the first and second level bounds are LP-optimal, respectively. The columns No ULB2 collect intervals where 
some of the necessary conditions from Section 3 is not satisfied. Finally, the column ULB2 shows intervals where the second level bounds
exist but are not optimal. 

\medskip

More extensive data for $3\leq n\leq 12$ and $3\leq N \leq 1007$ can be found at https://my.vanderbilt.edu/edsaff/.

\medskip

\begin{table}
\caption{First and second level ULB optimality for $n=8$, $90\leq N \leq 990$ .}
\label{AC}

\vspace*{3mm}

\centering

\begin{tabular}{|c|c|c|c|c|c|c|}
\hline
   $\tau(8,N)$ & $[D(8,\tau),D(8,\tau+1)]$ &  ULB1-LP    &     No  ULB2  & ULB2-LP & ULB2 & No ULB2 \\ \hline
  $5$ &  $[72,156]$ &  $[72,79], [91,102]$  &  $[80,90]$   &   &     &        [103,155]      \\ \hline
  $6$ &  $[156,240]$ &    &  $[156,167]$   &  $[168,239]$  &     &            \\ \hline       
 $7$ &  $[240,450]$ &  $240$  &     & $[241,316]$  &     & $[317,449]$             \\ \hline        
  $8$ &  $[450,660]$ &    &     & $[450,532]$  & $[533,625]$     & $[626,659]$             \\ \hline      
%   $9$ &  $[660,1112]$ &    & $[660,730]$     & $[731,776]$  & $[777,882]$     & $[883,1111]$             \\ \hline      
 \end{tabular}
\end{table}

\vspace{10mm}

\begin{table}
\caption{First and second level ULB optimality for $n=9$, $90\leq N \leq 990$ .}
\label{AD}

\vspace*{3mm}

\centering

\begin{tabular}{|c|c|c|c|c|c|c|}
\hline
   $\tau(9,N)$ & $[D(9,\tau),D(9,\tau+1)]$ &  ULB1-LP    &     No  ULB2  & ULB2-LP & ULB2 & No ULB2 \\ \hline
  $5$ &  $[90,210]$ &  $[90,141]$  &  $[142,209]$   &   &     &              \\ \hline
  $6$ &  $[210,330]$ &    &  $[210,232]$   & $[233,284]$  &     & $[285,329]$             \\ \hline       
 $7$ &  $[330,660]$ &  $[330,332]$  &  $[333,338]$   & $[339,439]$  &     & $[440,659]$             \\ \hline        
  $8$ &  $[660,990]$ &    &     & $[660,850]$  & $[851,929]$     & $[930,989]$             \\ \hline      
 \end{tabular}
\end{table}

\baselineskip=0.99\normalbaselineskip

\begin{table}
\caption{First and second level ULBs vs. harmonic energy for a sample of cases $\ 3 \leq n \leq 5 \ $.}
\label{AB}

\vspace*{6mm}

\centering
\begin{tabular}{|c|c|c|c|c|c|c|c|c|c|}
\hline
    $n$ &  $N$ & $\tau(n,N)$ & $\alpha_k$ & ULB1 & $\beta_{k+1}$ & ULB2 & $s(C)$ & Energy & $L_\tau(n,\beta_{k+1})$   \\ \hline
\rowcolor{Gray}
{$3^\blacktriangle$} & {$12$} &  $5$ &  $0.44721$ &   $98.33050$ &  $---$ &  $---$ &  $0.44721$ &   $98.33050$ &  $---$   \\ \hline
          $3$ & $13^{-+}$ &  $5$ &  $0.48937$ &  $117.50227$ &  $0.49320$ &  $117.52252$ &  $0.61129$ &  $117.70646$ &  $13.10104$   \\ \hline
          $3$ & $14^{--}$ &  $5$ &  $0.52401$ &  $138.43302$ &  $0.52767$ &  $138.45874$ &  $0.60367$ &  $138.61272$ &  $14.11861$   \\ \hline
          $3$ & $15^{+-}$ &  $5$ &  $0.55223$ &  $161.12063$ &        $ $ &          $ $ &  $0.65309$ &  $161.34048$ &         $ $   \\ \hline
\rowcolor{Gray}
          $3$ & $16^{-+}$ &  $6$ &  $0.57531$ &  $185.56365$ &  $0.57655$ &  $185.57396$ &  $0.65689$ &  $185.82331$ &  $16.04667$   \\ \hline
          $3$ & $17^{-+}$ &  $6$ &  $0.60000$ &  $211.85442$ &  $0.60253$ &  $211.88210$ &  $0.64134$ &  $212.10080$ &  $17.11170$   \\ \hline
          $3$ & $18^{--}$ &  $6$ &  $0.62115$ &  $239.93234$ &  $0.62358$ &  $239.96600$ &  $0.67514$ &  $240.16893$ &  $18.12579$   \\ \hline
          $3$ & $19^{--}$ &  $6$ &  $0.63921$ &  $269.79600$ &  $0.64099$ &  $269.82637$ &  $0.70822$ &  $270.17893$ &  $19.10762$   \\ \hline
\rowcolor{Gray}
          $3$ & $20^{-+}$ &  $7$ &  $0.65465$ &  $301.44437$ &        $ $ &          $ $ &  $0.69348$ &  $301.76313$ &         $ $   \\ \hline
 \hline
\rowcolor{Gray}
          $4$ & $14^{-+}$ &  $4$ &  $0.27429$ &   $98.00000$ &        $ $ &          $ $ &  $0.33921$ &   $98.52459$ &         $ $   \\ \hline
      $4^{*}$ & $15^{-+}$ &  $4$ &  $0.30620$ &  $114.95833$ &  $0.30901$ &  $115.00000$ &  $0.35355$ &  $115.23320$ &  $15.09646$   \\ \hline
      $4^{*}$ & $16^{-+}$ &  $4$ &  $0.33333$ &  $133.33333$ &  $0.33668$ &  $133.39481$ &  $0.43652$ &  $133.89967$ &  $16.13540$   \\ \hline
          $4$ & $17^{-+}$ &  $4$ &  $0.35645$ &  $153.12500$ &  $0.35921$ &  $153.18839$ &  $0.47650$ &  $153.96222$ &  $17.13061$   \\ \hline
          $4$ & $18^{-+}$ &  $4$ &  $0.37627$ &  $174.33333$ &  $0.37792$ &  $174.38060$ &  $0.48480$ &  $175.23235$ &  $18.09042$   \\ \hline
          $4$ & $19^{-+}$ &  $4$ &  $0.39337$ &  $196.95833$ &        $ $ &          $ $ &  $0.48797$ &  $197.90580$ &         $ $   \\ \hline
\rowcolor{Gray}
    $4^{\diamond}$ & $20$ &  $5$ &  $0.40824$ &  $221.00000$ &  $---$ &  $---$ &  $0.44168$ &    $221.59853$ &  $---$   \\ \hline
      $4^{*}$ & $21^{-+}$ &  $5$ &  $0.42720$ &  $246.75000$ &  $0.42895$ &  $246.80226$ &  $0.52049$ &  $247.47325$ &  $21.09675$   \\ \hline
      $4^{*}$ & $22^{-+}$ &  $5$ &  $0.44461$ &  $274.00000$ &  $0.44767$ &  $274.10417$ &  $0.54446$ &  $275.03231$ &  $22.18545$   \\ \hline
      $4^{*}$ & $23^{-+}$ &  $5$ &  $0.46050$ &  $302.75000$ &  $0.46399$ &  $302.88662$ &  $0.57524$ &  $304.08398$ &  $23.23343$   \\ \hline
      $4^{*}$ & $24^{--}$ &  $5$ &  $0.47495$ &  $333.00000$ &  $0.47854$ &  $333.15757$ &  $0.50000$ &  $334.00000$ &  $24.26443$   \\ \hline
          $4$ & $25^{--}$ &  $5$ &  $0.48807$ &  $364.75000$ &        $ $ &          $ $ &  $0.60167$ &  $365.97676$ &         $ $   \\ \hline
          $4$ & $26^{+-}$ &  $5$ &  $0.50000$ &  $398.00000$ &        $ $ &          $ $ &  $0.56449$ &  $399.38498$ &         $ $   \\ \hline
          $4$ & $27^{+-}$ &  $5$ &  $0.51084$ &  $432.75000$ &        $ $ &          $ $ &  $0.64815$ &  $434.30824$ &         $ $   \\ \hline
          $4$ & $28^{+-}$ &  $5$ &  $0.52072$ &  $469.00000$ &        $ $ &          $ $ &  $0.64237$ &  $470.79842$ &         $ $   \\ \hline
          $4$ & $29^{+-}$ &  $5$ &  $0.52973$ &  $506.75000$ &        $ $ &          $ $ &  $0.62694$ &  $508.75066$ &         $ $   \\ \hline
\rowcolor{Gray}
          $4$ & $30^{-+}$ &  $6$ &  $0.53798$ &  $546.00000$ &  $0.53982$ &  $546.12516$ &  $0.63014$ &  $548.37233$ &  $30.18048$   \\ \hline
                           \hline
     \rowcolor{Gray}
    $5^{\diamond}$ & $30$ &  $5$ &  $0.37796$ &  $398.22942$ &        $ $ &          $ $ &  $0.41665$ &  $400.57973$ &         $ $   \\ \hline
          $5$ & $31^{-+}$ &  $5$ &  $0.38810$ &  $429.26411$ &        $ $ &          $ $ &  $0.49636$ &  $431.73992$ &         $ $   \\ \hline
      $5^{*}$ & $32^{-+}$ &  $5$ &  $0.39779$ &  $461.55489$ &  $0.39870$ &  $461.65839$ &  $0.44721$ &  $463.22759$ &  $32.09565$   \\ \hline
      $5^{*}$ & $33^{-+}$ &  $5$ &  $0.40702$ &  $495.10289$ &  $0.40860$ &  $495.29351$ &  $0.52494$ &  $498.25726$ &  $33.17595$   \\ \hline
      $5^{*}$ & $34^{-+}$ &  $5$ &  $0.41580$ &  $529.90910$ &  $0.41781$ &  $530.17012$ &  $0.55380$ &  $533.83563$ &  $34.23704$   \\ \hline
      $5^{*}$ & $35^{-+}$ &  $5$ &  $0.42413$ &  $565.97439$ &  $0.42637$ &  $566.28683$ &  $0.57230$ &  $570.72828$ &  $35.27872$   \\ \hline
      $5^{*}$ & $36^{--}$ &  $5$ &  $0.43202$ &  $603.29953$ &  $0.43436$ &  $603.64803$ &  $0.51722$ &  $607.97487$ &  $36.30722$   \\ \hline
      $5^{*}$ & $37^{--}$ &  $5$ &  $0.43950$ &  $641.88518$ &  $0.44196$ &  $642.26961$ &  $0.57729$ &  $647.27793$ &  $37.34082$   \\ \hline
          $5$ & $38^{--}$ &  $5$ &  $0.44659$ &  $681.73194$ &        $ $ &          $ $ &  $0.56512$ &  $687.15114$ &         $ $   \\ \hline
          $5$ & $39^{+-}$ &  $5$ &  $0.45330$ &  $722.84035$ &        $ $ &          $ $ &  $0.58602$ &  $728.31676$ &         $ $   \\ \hline
          $5$ & $40^{+-}$ &  $5$ &  $0.45965$ &  $765.21089$ &        $ $ &          $ $ &  $0.56248$ &  $769.75044$ &         $ $   \\ \hline
\end{tabular}

\end{table}

In Table \ref{AB} we present examples of bounds for small dimensions and cardinalities which behave typically. For each pair $(n,N)$ 
we give the values of the corresponding parameters $\alpha_k$, $\beta_{k+1}$ and the best known maximal inner product $s(C)$ (i.e., 
an upper bound on $s(n,N)$). The first level and second level ULBs are shown in the fifth and seventh column, respectively, and the
best known Newton energies are shown in the ninth column. The value of the corresponding Levenshtein bound is shown in the last column 
only if it is improved by our second level bound (equal to $N$). The data for eighth and ninth columns is taken from 
 \cite{BBCGKS} (see https://aimath.org/data/paper/BBCGKS2006/). 

The case $(3,12)$ corresponds, of course, to the icosahedron -- a universally optimal code (indicated with superscript $^\blacktriangle$). The empty cells for ULB2 mean that 
some of the necessary conditions from Section 3 is not satisfied (the positive definiteness of the Hermite interpolant 
$H_{\Lambda_{n,k}}(q_kq_{k+1};q_{k+1}^2)$ fails first). The superscripts $^{\diamond}$ and $^{*}$ mean that ULB1 and ULB2 
are LP-optimal, respectively. The superscripts of the cardinalities show the signs of the test-functions $Q_{\tau(n,N)}+3$ and 
$Q_{\tau(n,N)}+4$, respectively. 

\medskip

In conclusion we formulate the following conjecture.

\begin{conjecture}
For every dimension $n$ the values $N\in [D(n,\tau),D(n,\tau+1)]$ for which second level bounds exist form an interval.
\end{conjecture}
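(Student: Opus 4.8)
The plan rests on a structural observation about the system \eqref{duad-forL-1}--\eqref{duad-forq}: the cardinality $N$ enters \emph{only} through the single equation \eqref{duad-forq}, while the two equations \eqref{duad-forL-1}, \eqref{duad-forL-2} and \emph{every} condition required by the Section~3 construction --- that $r_{k+1}$ have $k+1$ simple real roots in $[-1,1)$, that the weights \eqref{duad-forq2} be positive, that the nondegeneracy \eqref{existence-cond} hold, that $c_i\ge 0$, and that \eqref{pd-qk-1} and \eqref{ineq-check-a1b1} be satisfied --- are conditions on the triple $(c_1,c_2,c_3)$ alone, free of $N$. I would therefore let $\Gamma\subset\mathbb{R}^3$ be the algebraic curve cut out by the left-hand sides of \eqref{duad-forL-1} and \eqref{duad-forL-2}, and let $V\subseteq\Gamma$ be the locus on which all the listed admissibility conditions hold. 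On $V$ one has $c_i\ge 0$, so the denominator below is positive, and solving \eqref{duad-forq} for $N$ yields a well-defined continuous map $N\colon V\to(0,\infty)$,
\[ N(c_1,c_2,c_3)=\frac{1+c_1+c_2+c_3}{I_{k+1}+c_1I_k+c_2I_{k-1}+c_3I_{k-2}}. \]

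The first claim is that, under the operative reading in which a second level bound exists exactly when the Section~3 construction succeeds, the set of admissible $N$ is precisely the image $N(V)$. Indeed, whenever $(c_1,c_2,c_3)\in V$ the construction succeeds and, by Theorem~\ref{fh-good} together with the strict inequality $S_\tau(n,N;h)>R_\tau(n,N;h)$ noted thereafter, one has $\mathcal{W}_{h,\Lambda_{n,k}}(n,N)>\mathcal{W}_{h,\mathcal{P}_{2k-1}}(n,N)$; by the contrapositive of Theorem~\ref{THM_subspace} this forces $Q_{2k+2}^{(n,N)}<0$ or $Q_{2k+3}^{(n,N)}<0$, so the point automatically lies in the second level regime \eqref{odd_case}. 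Thus existence of a genuine second level bound is equivalent to membership in $N(V)$, and no separate analysis of the $N$-dependent test-function signs is needed. Since $N(\cdot)$ is continuous and the continuous image of a connected set is connected, the conjecture reduces to the single assertion that \emph{$V$ is connected}, a connected subset of $\mathbb{R}$ being an interval.

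To prove $V$ connected I would first use the implicit function theorem to show that near the admissible locus $\Gamma$ is a smooth arc --- the Jacobian of the two defining equations being nonsingular on the admissible region, which one expects to be controlled by the same determinant that appears in \eqref{existence-cond} --- and that the admissibility conditions confine $V$ to a single branch $\Gamma_0$. I would parametrize $\Gamma_0$ by the largest node $\beta_{k+1}$, which by Theorem~\ref{IL} satisfies $\beta_{k+1}>\alpha_k$ and is the natural monotone coordinate. The goal is then to verify that each admissibility condition carves out a sub-arc of $\Gamma_0$, i.e. an interval in $\beta_{k+1}$, so that $V$, their finite intersection, is an interval. The open geometric conditions (simplicity and confinement of the $\beta_i$ to $[-1,1)$, positivity of the $\theta_i$, non-vanishing of the determinant in \eqref{existence-cond}) can be handled through the interlacing relation \eqref{interlacing}, which persists along $\Gamma_0$ and permits a degeneracy only at the ends of the admissible range; the half-space and interpolation conditions $c_i\ge 0$, \eqref{pd-qk-1}, \eqref{ineq-check-a1b1} reduce to sign-monotonicity of explicit functions of $(c_1,c_2,c_3)$ along $\Gamma_0$.

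The hard part will be this last monotonicity. Because $\Gamma_0$ admits no closed form --- the coefficients $d_i^{(j)}$ enter \eqref{duad-forL-1}, \eqref{duad-forL-2} nonlinearly and $c_3$ is selected as a root of a degree-six polynomial --- a direct implicit-differentiation analysis of $dc_i/d\beta_{k+1}$ seems to branch on $n$ and $k$ and to require tracking which algebraic branch is active. I expect the decisive ingredient to be a variational/perturbation argument in the spirit of the classical monotone dependence of Gauss-type quadrature nodes and weights on their generating measure, showing that as $\beta_{k+1}$ increases the whole configuration $(c_1,c_2,c_3)$, together with the interpolation data $A_i,B_i$ of \eqref{ab-solution1}, moves monotonically and crosses each boundary of the admissible region exactly once. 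Establishing such a uniform monotonicity is the crux on which a complete proof hinges; the interleaved behavior visible in Tables~\ref{AC} and \ref{AD} shows that this monotonicity, if true, is genuinely delicate, which is why the statement is offered only as a conjecture.
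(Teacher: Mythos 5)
The statement you are addressing is offered in the paper purely as a conjecture, with no accompanying proof, so the only question is whether your argument closes it --- and it does not. Your structural reduction is genuinely useful and correct as far as it goes: $N$ indeed enters the determining system only through \eqref{duad-forq}, while \eqref{duad-forL-1}, \eqref{duad-forL-2} and the admissibility conditions (simple real roots of $r_{k+1}$ in $[-1,1)$, positivity of the weights \eqref{duad-forq2}, the nondegeneracy \eqref{existence-cond}, $c_i\ge 0$, \eqref{pd-qk-1}, \eqref{ineq-check-a1b1}) are all $N$-free, so the admissible cardinalities are exactly the image of the admissible locus $V$ under the continuous map $N(c_1,c_2,c_3)$, and the conjecture reduces to the connectedness of $V$. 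Two caveats even at this stage: your equivalence between ``a second level bound exists'' and ``$N\in N(V)$'' leans on the strict inequality $S_\tau(n,N;h)>R_\tau(n,N;h)$, which the paper asserts only by reference to \cite{BDHSS}; and the conjecture also covers the even case $\tau(n,N)=2k$, which the paper merely sketches and your plan does not address.

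The genuine gap is the connectedness of $V$ itself, and you concede it. Everything after the reduction is programmatic rather than proved: you have not shown that the solution set of \eqref{duad-forL-1}--\eqref{duad-forL-2} is a smooth arc near $V$ (the Jacobian you would need to control is that of two cubic equations in $(c_1,c_2,c_3)$, and there is no reason it is governed by the determinant in \eqref{existence-cond}, which involves inner products of $q_{k+1}^2$ and $tq_{k+1}^2$ against $P_{2k}^{(n)}$ and $P_{2k+1}^{(n)}$); you have not shown that $V$ lies on a single branch, whereas the paper explicitly notes that the degree-six equation for $c_3$ has multiple roots all of which must be checked; and the decisive monotonicity along the branch, which would make each admissibility condition cut out a sub-arc, is precisely what you leave open. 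The data in Table \ref{AC} (the row $\tau(8,N)=5$, where first-level LP-optimality holds on the two separated intervals $[72,79]$ and $[91,102]$) already shows that closely related sign conditions are \emph{not} monotone in $N$, so the monotonicity you invoke cannot be taken on faith and may require a mechanism quite different from the classical monotone dependence of Gaussian nodes on the measure. In short: a clean and plausible reduction of the conjecture to a concrete connectivity statement, but not a proof; the conjecture remains open after your argument, exactly as the paper leaves it.
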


\textit{Acknowledgements}{
The research of the first and fifth authors was supported, in part, by a Bulgarian NSF contract DN02/2-2016.
The research of the second author was supported, in part, by a Simons Foundation grant no. 282207.
The research of the third and fourth authors was supported, in part, by the U. S. National Science Foundation under grant DMS-1516400.
The first author is also with Technical Faculty, South-Western University, Blagoevgrad, Bulgaria.
The research for this article was concluded while four of the authors were in residence at Oberwolfach Research Institute for Mathematics 
under the "Research in Pairs" program in 2019.}


\begin{thebibliography}{0}
\bibitem{A1} N.\,N.\,Andreev,
A spherical code,
\emph{Uspekhi Mat. Nauk} 54, (1999): 255-256.

\bibitem{A2} N.\,N.\,Andreev,
A minimal design of order $11$ on the $3$-sphere,
\emph{Math. Notes} 67, (2000): 417-424.

\bibitem{AY} N.\,N.\,Andreev, V.\,A.\,Yudin,
Polynomials of least deviation from zero and Chebyshev-type cubature formulas,
\emph{Proc. Steklov Inst. Math.} 232, (2001): 39-51.

\bibitem{AB} V.\,V.\,Arestov, A.\,G.\,Babenko,
Estimates of the maximal value of angular code distance for $24$ and $25$ points on the unit sphere in ${\mathbb R}^4$,
\emph{Math. Notes} 68, (2000): 419-435.

\bibitem{BBCGKS} B.\,Ballinger, G.\,Blekherman, H.\,Cohn, N.\,Giansiracusa, E.\,Kelly, A.\,Sch\"{u}rmann, 
Experimental Study of Energy-minimizing Point Configurations on Spheres, 
{\em Experiment. Math.} {\bf 18}, 257--283, (2009).

%\bibitem{BBMQ} B.\,Beckermann, J.\,Bustamante, R. Martinez-Cruz, J. Quesada,
%Gaussian, Lobatto and Radau positive quadrature rules with a prescribed abscissa,
%\emph{Calcolo} 51, (2014): 319-328.
%
\bibitem{BHS} S.\,Borodachov, D.\,Hardin, E.\,Saff, \emph{Discrete Energy on Rectifiable Sets}, Springer, 2019 (to appear)

\bibitem{Bo} K.\,Boroczky Packing of spheres in spaces of constant curvature. \emph{Acta
Math. Hung.}, 32 (1978), 243-261.

\bibitem{BoJr} K.\,Boroczky Jr., \emph{Finite packing and covering}. Cambridge University
Press, 2004.

\bibitem{Boy1} P.\,Boyvalenkov, Extremal polynomials for obtaining bounds
for spherical codes and designs, \emph{Discr. Comp. Geom.} 14, (1995): 167-183.

\bibitem{BDB} P.\,Boyvalenkov, D.\,Danev, S.\,Bumova, 
Upper bounds on the minimum distance of spherical codes, 
\emph{IEEE Trans. Inform. Theory} 41, (1996): 1576-1581.

\bibitem{BDK} P.\,Boyvalenkov, D. Danev, P. Kazakov,
Indexes of spherical codes, DIMACS,
\emph{ Ser. Disc. Math. \& Theor. Comp. Sci.} 56, (2001): 47-57.

\bibitem{BDHSS} P.\,Boyvalenkov, P.\,Dragnev, D.\,Hardin, E.\,Saff, M.\,Stoyanova.
Universal lower bounds for potential energy of spherical codes,
\emph{Constr. Approx.} 44, (2016): 385-415.

\bibitem{CCEK} H.\,Cohn, J.\,Conway, N.\,Elkies,  A.\,Kumar,
The $D_4$ root system is not universally optimal,
\emph{Experiment. Math.} 16, (2007): 313-320.

\bibitem{CK} H.\,Cohn, A.\,Kumar,
Universally optimal distribution of points on spheres,
\emph{J. Amer. Math. Soc.} 20, (2006): 99-148.

\bibitem{CW} H.\,Cohn, J.\,Woo, Three point bounds for energy minimization,
\emph{J. Amer. Math. Soc.} 25, (2012): 929-958.

\bibitem{Co1} H.\,S.\,M.\,Coxeter, \emph{Introduction to Geometry}, 2nd ed. New York: Wiley, 1969.

\bibitem{Co2} H.\,S.\,M.\,Coxeter, \emph{Regular Polytopes}, 3rd. ed., Dover Publications, 1973. ISBN 0-486-61480-8.

\bibitem{D} P.\,J.\,Davis, 
\emph{Interpolation and Approximation},
Blaisdell Publishing Company, New York, 1963.

\bibitem{DGS} P.\,Delsarte, J.-M.\,Goethals, J.\,J.\,Seidel,
Spherical codes and designs, 
\emph{Geom. Dedicata} 6, (1977): 363-388.

\bibitem{DL} P.\,Delsarte, V.\,I.\,Levenshtein, 
Association schemes and coding theory, 
\emph{Trans. Inform. Theory} 44, (1998): 2477-2504.

\bibitem{Gas} G. Gasper, 
Linearization of the product of Jacobi polynomials, II,
\emph{Canad. J. Math.} 22, (1970): 582-593.

\bibitem{Hau78} W.\,Haussmann,
Differentiable Tchebycheff subspaces and Hermite interpolation,
\emph{Annales Acad. Scientiarum Fennicre}, Ser. A. I. Mathematica, 4, (1978): 75-83.

\bibitem{Ike73} Y.\,Ikebe,
Hermite-Birkhoff interpolation problems in Haar subspaces,
\emph{J. Approx. Theory} 8, (1973): 142-149.

\bibitem{KL} G.\,A.\,Kabatiansky, V.\,I.\,Levenshtein,
Bounds for packings on a sphere and in space,
\emph{Probl. Inform. Transm.} 14, (1978): 1-17.

\bibitem{Lev78}  V.\,I.\,Levenshtein,
On bound for packings in $n$-dimensional Euclidean space,
\emph{Soviet Math. Dokl.} 20, (1979): 417-421.

\bibitem{Lev83}  V.\,I.\,Levenshtein,
Bounds for packings of metric spaces and some their applications,
in \emph{Probl. Cybern.} 40, Nauka, Moscow, (1983): 43-110 (in Russian).

\bibitem{Lev92} V.\,I.\,Levenshtein,
Designs as maximum codes in polynomial metric spaces,
\emph{Acta Appl. Math.} 25, (1982): 1-82.

\bibitem{Lev} V.\,I.\,Levenshtein,
Universal bounds for codes and designs,
\emph{Handbook of Coding Theory}, V.\,S.~Pless and W.\,C.~Huffman, Eds., 
Elsevier, Amsterdam, Ch.~6, (1998): 499-648.

\bibitem{MP00} H.\,N.\,Mhaskar, D.\,V.\,Pai,
\emph{Fundamentals of Approximation Theory},
Alpha Science International, 2000.

\bibitem{M} O.\,R.\,Musin,
The kissing number in four dimensions.
\emph{Ann. of Math.} 168, (2008): 1-32.

\bibitem{NN} S. Nikova, V. Nikov, Extremal polynomials for codes in polynomial metric spaces,
in Proceedings of IEEE International Symposium on Information Theory (ISIT2000), Sorrento, Italy, June 25-30, 2000, 60. 

\bibitem{OS} A.\,M.\,Odlyzko, N.\,J.\,A.\,Sloane,
New bounds on the number of unit spheres that can touch a unit sphere in $n$ dimensions,
\emph{J. Comb. Theory} A26, (1979): 210-214.

\bibitem{Si} B.\,Simon,
\emph{Orthogonal Polynomials on the Unit Circle, Part 1: Classical Theory},
American Mathematical Society Colloquium Publications 54,
American Mathematical Society, Providence, RI, 2005.

\bibitem{Sze} G.\,Szeg\H{o},
\emph{Orthogonal Polynomials},
American Mathematical Society Colloquium Publications 23,
American Mathematical Society, Providence, RI, 1939.

\bibitem{Y} V.\,A.\,Yudin,
Minimal potential energy of a point system of charges,
\emph{Discret. Mat.} 4, (1982): 115-121 (in Russian);
English translation: Discr. Math. Appl. 3, (1983): 75-81.

\end{thebibliography}
\end{document}